\newcommand{\Af}{\mathbb{A}_{f}}
\newcommand{\C}{\mathbb{C}}
\newcommand{\F}{\mathbb{F}}
\newcommand{\Fp}{\mathbb{F}_p}
\newcommand{\Fpx}{\mathbb{F}_p^\times}
\newcommand{\N}{\mathbb{N}}
\newcommand{\Q}{\mathbb{Q}}
\newcommand{\Ql}{\Q_l}
\newcommand{\Qp}{\Q_p}
\newcommand{\R}{\mathbb{R}}
\newcommand{\Z}{\mathbb{Z}}
\newcommand{\Zlt}{\Z/l^t}
\newcommand{\Zl}{{\Z_l}}
\newcommand{\Zn}{\Z/n}
\newcommand{\cG}{\mathcal{G}}
\newcommand{\cO}{\mathcal{O}}
\newcommand{\cT}{\mathcal{T}}
\newcommand{\scC}{\mathscr{C}}
\newcommand{\hGQ}{{\widehat{G(\Q)}}}
\newcommand{\cCKZn}{\scC(K,\Zn)}
\newcommand{\cCLZlt}{\scC(L,\Zlt)}
\newcommand{\cCLZl}{\scC(L,\Zl)}
\newcommand{\cCLZn}{\scC(L,\Zn)}
\newcommand{\GammaK}{\Gamma(K)}
\newcommand{\GammaL}{{\Gamma(L)}}
\def\gg{\mathfrak{g}}
\newcommand{\gk}{\mathfrak{k}}
\newcommand{\gp}{\mathfrak{p}}
\newcommand{\gs}{\mathfrak{s}}
\newcommand{\gt}{\mathfrak{t}}
\newcommand{\gz}{\mathfrak{z}}
\newcommand{\Cong}{\mathrm{Cong}}
\newcommand{\Corest}{\mathrm{CoRest}}
\newcommand{\cts}{\mathrm{cts}}
\newcommand{\GL}{\mathrm{GL}}
\newcommand{\Hom}{\mathrm{Hom}}
\newcommand{\ind}{\mathrm{ind}}
\newcommand{\Ktame}{K_\mathrm{tame}}
\newcommand{\Lie}{\mathrm{Lie}}
\newcommand{\limd}[1]{
	\begin{array}{c}
		\lim\\
		\stackrel{\textstyle \rightarrow}{\scriptstyle #1}
	\end{array}
}
\newcommand{\limp}[1]{
	\begin{array}{c}
		\lim\\
		\stackrel{\textstyle \leftarrow}{\scriptstyle #1}
	\end{array}
}
\newcommand{\limpd}[1]{
	\left(\begin{array}{c}
		\lim\\
		\stackrel{\textstyle \leftarrow}{\scriptstyle #1}
	\end{array}\right)^{1}
}
\newcommand{\met}{\mathrm{met}}
\newcommand{\pr}{\mathrm{pr}}
\newcommand{\rank}{\mathrm{rank}}
\newcommand{\Rest}{\mathrm{Rest}}
\newcommand{\SL}{\mathrm{SL}}
\newcommand{\Spin}{\mathrm{Spin}}
\newcommand{\Sp}{\mathrm{Sp}}
\newcommand{\Sym}{\mathrm{Sym}}
\newcommand{\tame}{\mathrm{tame}}
\newtheorem{theorem}{Theorem}
\newtheorem{lemma}{Lemma}
\newtheorem{corollary}{Corollary}
\newtheorem{proposition}{Proposition}
\newtheorem{conjecture}{Conjecture}
\newtheorem*{cor}{Corollary}
\newtheorem*{thm}{Theorem}
\theoremstyle{definition}
\theoremstyle{remark}
\newtheorem*{remark}{Remark}
\begin{document}

\title{Non-Residually finite extensions of arithmetic groups}

\author{Richard M. Hill}

\date{July 2018}

\begin{abstract}
	The aim of the article is to show that there are many finite extensions of arithmetic groups which are not residually finite.
	Suppose $G$ is a simple algebraic group over the rational numbers satisfying both strong approximation, and the congruence subgroup problem. We show that every arithmetic subgroup of $G$ has finite extensions which are not residually finite.
	More precisely, we investigate the group
	\[
		\bar H^2(\Z/n) = \limd{\Gamma} H^2(\Gamma,\Z/n),
	\]
	where $\Gamma$ runs through the arithmetic subgroups of $G$.
	Elements of $\bar H^2(\Z/n)$ correspond to (equivalence classes of) central extensions of arithmetic groups by $\Z/n$;
	non-zero elements of $\bar H^2(\Z/n)$ correspond to extensions which are not residually finite.
	We prove that $\bar H^2(\Z/n)$ contains infinitely many elements of order $n$, some of which are invariant for the action of the arithmetic completion $\hGQ$ of $G(\Q)$.
	We also investigate which of these (equivalence classes of) extensions
	lift to characteristic zero, by determining
	the invariant elements in the group
	\[
		\bar H^2(\Zl) = \limp{t} \bar H^2(\Zlt).
	\]
	We show that $\bar H^2(\Zl)^\hGQ$ is isomorphic to $\Zl^c$ for some positive integer $c$.
	When $G(\R)$ has no simple components of complex type, we prove that $c=b+m$, where $b$ is the number of simple components of $G(\R)$ and $m$ is the dimension of the centre of a maximal compact subgroup of $G$.
	In all other cases, we prove upper and lower bounds on $c$; our lower bound (which we believe is the correct number) is $b+m$.
\end{abstract}

\maketitle

\tableofcontents

\section{Introduction}

An abstract group $G$ is said to be residually finite if,
for every non-trivial element $g$, there is a subgroup $H$
of finite index in the group, which does not contain $g$.
The content of this statement is not changed if we insist that
$H$ is a normal subgroup of $G$.
This is equivalent to the statement that the
canonical map from the group to its profinite completion
is injective.

Arithmetic groups are residually finite.
Indeed, if $\Gamma$ is an arithmetic group and
$1 \ne \gamma \in \Gamma$,
then there is even a congruence subgroup which does not contain $\gamma$.
On the other hand, Deligne wrote down a central extension $\tilde\Gamma$
of $\Sp_{2n}(\Z)$ ($n\ge 2$) by $\Z$, such that $\tilde\Gamma$ is not residually finite.
More precisely, the group $\tilde\Gamma$ fits into an exact sequence:
\[
	1 \to \Z \to \tilde\Gamma \to \Sp_{2n}(\Z) \to 1,
\]
and any subgroup of finite index in $\tilde\Gamma$ contains $2\Z$.

In this note, we show that a weaker version of Deligne's result holds
for a large class of arithmetic groups.

\subsection{}
We briefly recall Deligne's construction.
The Lie group $\Sp_{2n}(\R)$, is not simply connected.
In fact, its fundamental group is isomorphic to $\Z$.
We shall write $\tilde\Sp_{2n}(\R)$ for the universal cover
of $\Sp_{2n}(\R)$, so we have an exact sequence:
$$
	1 \to \Z \to \tilde\Sp_{2n}(\R) \to \Sp_{2n}(\R) \to 1.
$$
One defines $\tilde\Gamma$ to be the preimage of $\Sp_{2n}(\Z)$
 in $\tilde\Sp_{2n}(\R)$.
Note that $\tilde\Sp_{2n}(\R)$ is a Lie group, but is not the
group of real points of an algebraic group; in fact $\Sp_{2n}$ is
simply connected as an algebraic group.
Thus $\tilde\Gamma$ is not an arithmetic group.

\subsection{}
There are some cases for which Deligne's argument generalizes easily.
Suppose $G$ is an algebraic group over $\Q$, which is simple and simply connected.
As we have seen above, the group $G(\R)$ may fail to be simply
connected with the archimedean topology;
this happens whenever a maximal compact subgroup
of $G(\R)$ has infinite centre.
We shall assume that fundamental group $\pi_{1}(G(\R))$
has more than 2 elements.
We can define just as before an extension
\[
	1 \to \pi_{1}(G(\R)) \to \tilde\Gamma \to \Gamma \to 1,
\]
where $\Gamma$ is an arithmetic subgroup of $G(\Q)$.
There is also a canonical double cover $\tilde G(\R)^{\met}$ of $G(\R)$,
called the metaplectic cover:
\[
	1 \to \mu_{2} \to \tilde G(\R)^{\met} \to G(\R) \to 1,
	\qquad
	\mu_2 = \{1,-1\}.
\]
By the universal property of the universal cover, there is a canonical
map
\[
	\pi_{1}(G(\R)) \to \mu_{2}.
\]
Deligne's argument shows that if $G$ has the congruence subgroup property,
then every subgroup of finite index in $\tilde \Gamma$
contains $\ker\big(\pi_{1}(G(\R)) \to \mu_{2}\big)$.

To show that this generalization is not vacuous, we remark that
$\pi_{1}(G(\R))$ is infinite whenever there is a Shimura variety associated to $G$,
and the congruence subgroup property is known to hold
for simple, simply connected groups of rational rank at least $2$.

In this paper, we shall deal also with groups $G$, for which Deligne's construction cannot be used.
The most easily stated consequence of our results is the following.

\begin{theorem}
	\label{thm-simplified}
	Let $G$ be a simple algebraic group over $\Q$, which is algebraically simply connected, and has positive real rank.
	Assume also that $G$ and has finite congruence kernel.
	Let $\Gamma$ be an arithmetic subgroup of $G(\Q)$.
	Then there is a finite abelian group $A$ and an extension of groups
	\[
	 	1 \to A \to \tilde\Gamma \to \Gamma \to 1,
	\]
	such that $\tilde \Gamma$ is not residually finite.
\end{theorem}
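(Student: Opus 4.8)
The plan is to deduce this theorem from the existence of nonzero elements in $\bar H^2(\Z/n)$ for suitable $n$, together with the interpretation of such classes as non-residually-finite central extensions. First I would recall the dictionary: a class in $H^2(\Gamma,\Z/n)$ represents a central extension $1\to\Z/n\to\tilde\Gamma\to\Gamma\to 1$, and two such classes agree in the direct limit $\bar H^2(\Z/n)=\varinjlim_{\Gamma'}H^2(\Gamma',\Z/n)$ precisely when the extensions become equivalent after restriction to a finite-index subgroup. The key point, to be spelled out, is that a central extension of $\Gamma$ by a finite abelian group $A$ is residually finite if and only if its restriction to every finite-index subgroup of $\Gamma$ splits after passing to a further finite-index subgroup — equivalently, the class dies in $\bar H^2(A)$. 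Since $\Gamma$ itself is residually finite (it is arithmetic), $\tilde\Gamma$ fails to be residually finite exactly when the image of the extension class in $\bar H^2(A)$ is nonzero. So it suffices to produce, for our given $\Gamma$, a finite abelian group $A$ and a class in $H^2(\Gamma,A)$ whose image in $\bar H^2(A)$ is nonzero.

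Next I would invoke the main existence result of the paper: under the stated hypotheses on $G$ (simple, algebraically simply connected, positive real rank — hence strong approximation holds — and finite congruence kernel), the group $\bar H^2(\Z/n)$ contains nonzero elements (indeed infinitely many elements of order $n$) for appropriate $n$. Concretely one takes $n$ divisible enough that the relevant piece of $\bar H^2(\Zl)^{\hGQ}\cong\Zl^c$, reduced mod $n$, survives; a single prime $l$ and $n=l$ suffices once we know $c\ge 1$, which holds because $b\ge 1$ always (the lower bound $b+m$ in the abstract). Fix such an $n$ and a nonzero class $\bar c\in\bar H^2(\Z/n)$. By definition of the direct limit, $\bar c$ is represented by an actual cohomology class on \emph{some} arithmetic subgroup $\Gamma''$; after intersecting with $\Gamma$ and using that restriction of a nonzero limit class to a finite-index subgroup is still nonzero in $\bar H^2$ (the limit is over a cofinal system), we may assume the class is represented by $c\in H^2(\Gamma,\Z/n)$ with nonzero image in $\bar H^2(\Z/n)$. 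Setting $A=\Z/n$ and letting $\tilde\Gamma$ be the corresponding central extension then gives the desired group.

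The final step is the equivalence "nonzero in $\bar H^2(A)$ $\iff$ $\tilde\Gamma$ not residually finite," which is where the real content sits. For the direction we need: suppose for contradiction $\tilde\Gamma$ were residually finite. Its center contains $A$; pick $1\ne a\in A$ lying in the image of a generator, and a finite-index normal subgroup $N\le\tilde\Gamma$ with $a\notin N$. Replacing $N$ by $N\cap(\text{preimage of a congruence subgroup})$ and using the congruence subgroup property to control $N\cap A$, one arranges that the image $\bar\Gamma$ of $N$ in $\Gamma$ is a finite-index subgroup and that the pullback extension $1\to A/(A\cap N)\to N\cdot A/(A\cap N)\to\bar\Gamma\to 1$ is \emph{split}, because $N$ itself provides a section modulo $A\cap N$ while $a\notin A\cap N$ forces $A\cap N$ to be a proper subgroup — iterating over all nontrivial $a$ kills the whole class in the limit. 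Hence the image of $c$ in $\bar H^2(A)$ vanishes, contradicting the choice of $c$. The converse direction (which we do not even need) would run the argument backwards. I expect the main obstacle to be this last equivalence: carefully matching "every finite-index subgroup of $\tilde\Gamma$ misses a given central element" with "the class becomes trivial along a cofinal family of finite-index subgroups of $\Gamma$," and in particular ensuring via the congruence subgroup property that one can always cut down so that $A\cap N$ is controlled and the induced map on $H^2$ is the restriction map in the direct system defining $\bar H^2$. Everything else is formal once the paper's computation of $\bar H^2$ is in hand.
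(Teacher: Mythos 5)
Your overall strategy matches the paper's: reduce the theorem to the nonvanishing of $\bar H^2(\Zn)$ and use the dictionary between classes surviving in the direct limit and non-residually-finite extensions. The dictionary itself you state correctly, though your detour through the congruence subgroup property in the last paragraph is unnecessary: if $\tilde\Gamma$ were residually finite one would simply intersect, over the finitely many nontrivial $a\in A$, finite-index subgroups missing $a$; the image of the resulting subgroup in $\Gamma$ is a finite-index subgroup over which the extension splits, so the class dies in $\bar H^2$. No congruence subgroups enter at that point — the finiteness of the congruence kernel is used only to produce nonzero classes in the first place.

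The genuine gap is the step ``we may assume the class is represented by $c\in H^2(\Gamma,\Zn)$.'' A nonzero element of $\bar H^2(\Zn)=\limd{\Upsilon}H^2(\Upsilon,\Zn)$ is, by definition of the direct limit, represented by a class on \emph{some} finite-index subgroup $\Upsilon\subseteq\Gamma$; cofinality lets you shrink the representing subgroup further, but it never lets you pass up to the larger group $\Gamma$. The map $H^2(\Gamma,\Zn)\to\bar H^2(\Zn)$ is not surjective in general — the paper's exact sequence \eqref{low terms finite} shows that the interesting elements of $\bar H^2(\Zn)^L$ are detected in $H^3_\cts(L,\Zn)$, not sourced from $H^2(\GammaL,\Zn)$ — so you cannot simply take $A=\Zn$ with a central extension of the given $\Gamma$. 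This is exactly why the theorem is stated with an unspecified finite abelian $A$ and without the word ``central.'' The paper's fix, which your argument is missing, is an induction step: from the non-residually-finite central extension $\tilde\Upsilon$ of $\Upsilon$ by $\Zn$ with class $\sigma$, Shapiro's lemma produces $\Sigma\in H^2(\Gamma,A)$ with $A=\ind_\Upsilon^\Gamma(\Zn)$, a finite abelian group carrying a nontrivial $\Gamma$-action; the corresponding (non-central) extension $\tilde\Gamma$ of $\Gamma$ by $A$ restricts over $\Upsilon$ to a group surjecting onto $\tilde\Upsilon$, so residual finiteness of $\tilde\Gamma$ would yield a finite-index subgroup of $\tilde\Upsilon$ meeting $\Zn$ trivially, contradicting the defining property of $\tilde\Upsilon$. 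You need this step (or some substitute for it) to close the argument.
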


\subsection{}
It is an important open question in geometric group theory whether every Gromov-hyperbolic group is residually finite (see for example \cite{gromov-2}, \cite{bestvina}, \cite{kapovich-wise}, \cite{wise}, \cite{lubotzki-manning-wilton}).
This question turns out to be related to the following conjecture of Serre \cite{serre-cong}.

\begin{conjecture}
	\label{serre conjecture}
	Let $G/\Q$ be a simple, simply connected algebraic group of real rank $1$.
	Then the congruence kernel of $G$ is infinite.
\end{conjecture}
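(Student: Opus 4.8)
Since \autoref{serre conjecture} is Serre's long-standing conjecture, still open in general, I can only sketch a strategy and explain where it breaks; the plan below does prove the conjecture in the classical cases and isolates the remaining obstruction. Fix a simple, simply connected $G/\Q$ of real rank $1$, fix an arithmetic subgroup $\Gamma\le G(\Q)$, and write $C(G)=\ker\!\big(\widehat\Gamma\to\overline\Gamma\big)$ for the congruence kernel, where $\widehat\Gamma$ is the profinite completion and $\overline\Gamma$ the congruence completion (this kernel does not depend on the choice of $\Gamma$). The key elementary observation is that if $C(G)$ were finite, of order $m$ say, then every finite-index subgroup of $\Gamma$ would contain a congruence subgroup of index at most $m$; consequently the subgroup growth of $\Gamma$ would coincide, up to a bounded factor, with the much better understood growth of its congruence subgroups. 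So it suffices to show that $\Gamma$ has strictly more finite-index subgroups than this allows.

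The engine for the last point is \emph{largeness}: if some finite-index $\Gamma_0\le\Gamma$ admits a surjection onto a non-abelian free group, then the number of index-$\le n$ subgroups of $\Gamma_0$ grows at least like $n!$, whereas the number of congruence subgroups of index $\le n$ grows only like $n^{O(\log n/\log\log n)}$ (Lubotzky); the two are incompatible, so $C(G)$ is infinite. Largeness is available precisely when there is enough first cohomology around: for $\Q$-isotropic $G$ (so $\Gamma$ has cusps, and $G(\R)$ lacks Kazhdan's property (T)) it is supplied, after passing to a finite-index subgroup, by the Borel--Serre boundary, and for $\Q$-anisotropic $G$ of type $\SO(n,1)$ by virtual positivity of the first Betti number of the associated compact hyperbolic manifold. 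This is essentially the classical treatment of $\SL_2$ over $\Q$ and over imaginary quadratic fields, which also exploits the known largeness of arithmetic Fuchsian and Kleinian groups (Millson, Lackenby, Long--Reid, Agol and others).

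There is also a more conceptual route in the cocompact case, and this is the sense in which the conjecture is "related" to the residual-finiteness question for hyperbolic groups. Suppose $G$ is $\Q$-anisotropic of real rank $1$ and that $C(G)$ is finite. Then $\Gamma$ is word-hyperbolic, being a cocompact lattice in a rank-$1$ Lie group; by \autoref{thm-simplified} there is a finite abelian group $A$ and an extension $1\to A\to\tilde\Gamma\to\Gamma\to1$ with $\tilde\Gamma$ not residually finite; and a finite extension of a hyperbolic group is again hyperbolic, so $\tilde\Gamma$ would be a non-residually-finite hyperbolic group. Thus, in the $\Q$-anisotropic rank-$1$ case, \autoref{serre conjecture} follows from the assertion that every hyperbolic group is residually finite.

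The main obstacle is that both halves of the strategy stall on the same family. When $G(\R)$ has property (T) — that is, for the groups of type $\Sp(n,1)$ and $F_4^{(-20)}$ — every finite-index subgroup of $\Gamma$ has vanishing abelianization and there are no cusps that help, so no finite-index subgroup surjects onto $\Z$, let alone onto a free group, and the cohomological engine fails completely; while the hyperbolic-groups reduction merely replaces \autoref{serre conjecture} by another open problem. A genuine proof of the $\Sp(n,1)$ and $F_4^{(-20)}$ cases would therefore require a fundamentally different source of non-congruence subgroups — for example an $\ell$-adic deformation of $\Gamma$ inside a larger group, a mod-$p$ representation-theoretic construction, or a direct analysis of invariants of the sort appearing later in this paper but carried out \emph{without} assuming $C(G)$ finite — and it is there that I expect the essential difficulty to lie.
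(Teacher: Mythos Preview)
There is nothing to compare against: the paper states \autoref{serre conjecture} as a \emph{conjecture} of Serre and does not attempt to prove it. It is used only as motivation, and the paper's contribution in its direction is the conditional Corollary~1 (and the sharper Corollary~2), showing that \autoref{serre conjecture} follows from the assertion that every Gromov--hyperbolic group is residually finite. You recognise this correctly at the outset, and your ``conceptual route in the cocompact case'' is precisely the argument the paper gives for Corollary~1: assume $C(G)$ finite, apply \autoref{thm-simplified} to get a non-residually-finite finite extension $\tilde\Gamma$, observe $\tilde\Gamma$ is hyperbolic, contradiction.

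Your write-up goes well beyond what the paper does, adding the subgroup-growth/largeness approach and an honest account of why the $\Sp(n,1)$ and $F_4^{(-20)}$ cases resist both methods. That is perfectly reasonable as a survey of the state of the art, but it is not a proof, and you say so yourself. So the only ``gap'' is the one you already name: there is no proof here because none is known. One minor point of comparison: the paper's Corollary~1 is stated for all real-rank-$1$ arithmetic groups, not only the $\Q$-anisotropic ones, citing Gromov for their hyperbolicity; you restrict the hyperbolic-groups reduction to the cocompact case, which is the safe statement but slightly narrower than what the paper asserts.
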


\noindent
As a consequence of \autoref{thm-simplified}, we obtain the following.

\begin{corollary}
	If every Gromov--hyperbolic group is residually finite then \autoref{serre conjecture} is true.
\end{corollary}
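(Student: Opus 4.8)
The plan is to argue by contraposition: assuming \autoref{serre conjecture} fails, we produce a Gromov--hyperbolic group that is not residually finite. So suppose there is a simple, simply connected algebraic group $G/\Q$ of real rank $1$ whose congruence kernel is \emph{finite}. Since $0 \le \mathrm{rank}_\Q G \le \mathrm{rank}_\R G = 1$, the group $G$ is either anisotropic over $\Q$ or of $\Q$-rank $1$. The case of $\Q$-rank $1$ is already excluded by the classical theory of the congruence subgroup problem for isotropic rank-one groups (beginning with Serre's theorem that $\SL_2$ over a ring of $S$-integers has infinite congruence kernel); so we may assume $G$ is anisotropic over $\Q$. By reduction theory, every arithmetic subgroup $\Gamma \le G(\Q)$ is then a uniform lattice in $G(\R)$.

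Next I would record that such a $\Gamma$ is Gromov--hyperbolic. The Lie group $G(\R)$ is connected, since $G$ is simply connected; modulo a compact direct factor, which affects neither the quasi-isometry type of $G(\R)$ nor that of $\Gamma$, it is a simple Lie group of real rank $1$. Hence $\Gamma$ acts properly discontinuously, cocompactly and by isometries on the associated rank-one symmetric space $X$, a complete Riemannian manifold of pinched negative sectional curvature and in particular a $\mathrm{CAT}(-1)$ (so Gromov--hyperbolic) geodesic space. By the \v{S}varc--Milnor lemma $\Gamma$ is finitely generated and quasi-isometric to $X$, and is therefore a Gromov--hyperbolic group.

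Now apply \autoref{thm-simplified}: $G$ is simple, algebraically simply connected, has positive real rank, and has finite congruence kernel, so there is a finite abelian group $A$ and an extension
\[
	1 \to A \to \tilde\Gamma \to \Gamma \to 1
\]
with $\tilde\Gamma$ not residually finite. Being a finite extension of the finitely generated group $\Gamma$, the group $\tilde\Gamma$ is finitely generated, and since $A$ is finite the surjection $\tilde\Gamma \to \Gamma$ has finite kernel, hence is a quasi-isometry for word metrics. As Gromov--hyperbolicity is a quasi-isometry invariant of finitely generated groups, $\tilde\Gamma$ is Gromov--hyperbolic. Thus $\tilde\Gamma$ is a Gromov--hyperbolic group that is not residually finite, contradicting the hypothesis; hence \autoref{serre conjecture} holds.

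Almost every step is a formal consequence of \autoref{thm-simplified} together with standard geometric group theory (quasi-isometry invariance of hyperbolicity, hyperbolicity of uniform rank-one lattices via \v{S}varc--Milnor, and the quasi-isometry $\tilde\Gamma \sim \Gamma$), so I would not spell these out. The only point needing genuine external input is the reduction to the $\Q$-anisotropic case, i.e.\ the known validity of \autoref{serre conjecture} for isotropic rank-one groups; alternatively, one may simply note that the substance of \autoref{serre conjecture} lies in the anisotropic (uniform lattice) case, to which \autoref{thm-simplified} applies directly. I expect this bookkeeping about the rational rank to be the only mildly delicate part of the argument.
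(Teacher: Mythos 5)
Your argument is the same as the paper's in its essentials: apply \autoref{thm-simplified} to an arithmetic lattice $\Gamma$ in a rank-one group to get a finite, hence quasi-isometric, extension $\tilde\Gamma$ that is not residually finite, and contradict the hypothesis via quasi-isometry invariance of hyperbolicity. Where you differ is in the preliminary step: the paper simply asserts, citing Gromov, that every arithmetic subgroup of a real-rank-one Lie group is Gromov--hyperbolic, whereas you restrict to the $\Q$-anisotropic (cocompact) case and invoke \v{S}varc--Milnor there. Your caution is warranted --- non-uniform lattices in $\SO(n,1)$ or $\SU(n,1)$ for $n\ge 3$ contain $\Z^2$ subgroups coming from the cusps and are therefore \emph{not} hyperbolic, so the paper's blanket claim is false as stated and your version is the correct one. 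The price you pay is that you must dispose of the $\Q$-isotropic case separately; your appeal to ``the classical theory of the congruence subgroup problem for isotropic rank-one groups'' is the right idea but should be stated more carefully: beyond $\SL_2/\Q$ this is not Serre's original theorem but rests on the fact that a non-uniform lattice in a rank-one Lie group virtually surjects onto a non-abelian free group (Lubotzky), which forces the congruence kernel to be infinite. With that reference supplied, your proof is complete and in fact slightly more robust than the one in the paper.
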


\begin{proof}
	Let $\Gamma$ be an arithmetic subgroup of a Lie group with real rank $1$.
	It is known that $\Gamma$ is Gromov--hyperbolic
	(see chapter 7 of \cite{gromov}).
	Since hyperbolicity is invariant under quasi-isometry, every finite extension of $\Gamma$ is also hyperbolic, and hence by assumption residually finite.
	If the congruence kernel were finite, then the groups $\tilde \Gamma$ from \autoref{thm-simplified} would provide a counterexample to this.
\end{proof}

In fact one can show as a consequence of the results proved here
 the following slightly more precise result.

\begin{corollary}
	Assume that every Gromov--hyperbolic group is residually finite.
	If $G/\Q$ is a simple, simply connected group of real rank $1$
	then for every positive integer $n$ there is a surjective homomorphism
	from the congruence kernel of $G$ to $\Zn$.
\end{corollary}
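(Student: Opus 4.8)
The plan is to run the argument behind the preceding corollary with $\Zn$--coefficients, feeding in the analysis of $\bar H^2(\Zn)$ from the body of the paper and, in particular, the precise role played there by the congruence kernel $C$ of $G$. First I would reduce to the case of a prime power $n=\ell^t$: a continuous surjection $C\twoheadrightarrow\Zn$ exists for every $n$ once one exists for every prime power, since, given surjections $C\twoheadrightarrow\Z/\ell_i^{t_i}$ for the prime powers dividing a given $n$, the resulting map $C\to\prod_i\Z/\ell_i^{t_i}=\Zn$ is again surjective --- a subgroup of a finite product of groups of pairwise coprime orders which surjects onto each factor is the whole product. So fix $n=\ell^t$.

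As in the previous corollary, every arithmetic subgroup $\Gamma$ of $G(\Q)$ is Gromov--hyperbolic (since $G$ has real rank $1$), and every finite extension of $\Gamma$ is quasi-isometric to $\Gamma$, hence hyperbolic, hence residually finite under the standing assumption. Because $n$ is a prime power, the subgroups of $\Zn$ are linearly ordered, and so a central extension $1\to\Zn\to\tilde\Gamma\to\Gamma\to1$ with class $\xi$ is residually finite if and only if the restriction of $\xi$ to some finite-index arithmetic subgroup vanishes, i.e.\ if and only if $\xi=0$ in $\bar H^2(\Zn)$: if $\xi|_{\Gamma'}=0$ then $\tilde\Gamma|_{\Gamma'}\cong\Gamma'\times\Zn$ and, using residual finiteness of $\Gamma$, one finds a finite quotient of $\tilde\Gamma$ in which the central $\Zn$ survives, while conversely a finite quotient of $\tilde\Gamma$ into which $\Zn$ injects produces such a splitting. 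Hence the standing assumption forces $\bar H^2(\Zn)=0$.

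Finally I would invoke the body's description of $\bar H^2$. The order-$n$ classes in $H^2(\Gamma,\Zn)$ constructed there --- the ones accounting for the integer $b+m$ in the main theorem --- are defined for every arithmetic $\Gamma$, and the only way such a class can fail to survive in the direct limit $\bar H^2(\Zn)$ is through a contribution of the congruence kernel: via the five-term exact sequence of $1\to C\to\widehat\Gamma\to\bar\Gamma\to1$ (with $\bar\Gamma$ the congruence completion of $\Gamma$), taken in the limit over arithmetic $\Gamma$, such a class can be annihilated upon passing to a finite-index arithmetic subgroup only if $C$ admits a continuous quotient isomorphic to $\Zn$. Combined with $\bar H^2(\Zn)=0$ from the previous step, this produces a continuous surjection $C\twoheadrightarrow\Zn$, and hence onto every $\Zn$ by the first step.

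The step I expect to be the main obstacle is the last one: extracting from the body's computation of $\bar H^2$ the precise assertion that a $\Zn$-quotient of the congruence kernel is the \emph{only} obstruction to the survival, in the direct limit, of the order-$n$ classes used there. In the setting of the main theorem the congruence kernel is finite (in fact a quotient of $\{\pm1\}$ over $\Q$), so this refinement plays no role there and is needed only here, where $G$ is permitted to violate the congruence subgroup property; it is exactly the point at which one must re-examine, rather than merely quote, the earlier arguments.
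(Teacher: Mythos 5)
The paper does not actually write out a proof of this corollary (it is asserted with the remark that ``one can show [it] as a consequence of the results proved here''), so I can only assess your argument on its own terms. Your first two steps are correct and are surely part of the intended argument: the reduction to prime powers is fine (though nothing in step 2 actually uses it), and the observation that the standing hypothesis forces $\bar H^{2}(\Zn)=0$ for every $n$ is exactly the mechanism of the preceding corollary --- residual finiteness of a central extension produces a finite-index subgroup meeting the central $\Zn$ trivially, so every class dies on some arithmetic subgroup.

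Your third step, which you yourself flag as the main obstacle, is a genuine gap, and the mechanism you propose cannot work as stated, for two reasons. First, the order-$n$ classes of the paper are not ``defined for every arithmetic $\Gamma$'' independently of the congruence kernel: they are manufactured inside the direct limit $\bar H^{2}(\Zn)^{L}$ from classes in $H^{3}_{\cts}(\Ktame,\Zn)\subseteq H^{3}_{\cts}(L,\Zn)$, and the construction of the lift $\Ktame=\prod_{p}K_{p}^{*}\subseteq\hGQ$ uses conditions (2) and (6) of the tame-prime definition, which require $\Cong(G)$ to be finite of order prime to $p$; likewise the $b+m$ classes of \autoref{reform3} rest on \autoref{lemma L-cohomology}, which kills $\Cong(G)$ by finiteness. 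What survives unconditionally is only the supply of order-$n$ classes in $H^{3}_{\cts}(K_f,\Zn)$ for $K_f\subseteq G(\Af)$ (\autoref{finite lie} plus K\"unneth). Second, and more seriously, the five-term sequence of $1\to C\to\hat\Gamma\to\bar\Gamma\to1$ lives in degrees $\le 2$ and identifies a quotient of $\Hom(C,\Zn)^{\bar\Gamma}$ with $\ker\big(H^{2}_{\cts}(\bar\Gamma,\Zn)\to H^{2}_{\cts}(\hat\Gamma,\Zn)\big)$; but $H^{2}_{\cts}(K_f,\Zn)$ carries no order-$n$ classes in general (condition (6): $H^{2}(G(\Fp),\Q/\Z)=0$ at tame primes), so nothing of order $n$ can be fed into that sequence. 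One is forced into degree $3$: there, $\bar H^{2}(\Zn)=0$ together with \autoref{proposition spectral sequence} shows $H^{3}_{\cts}(\pr^{-1}K_f,\Zn)$ embeds in the finite group $H^{3}(\Gamma,\Zn)$, whence $\ker\big(H^{3}_{\cts}(K_f,\Zn)\to H^{3}_{\cts}(\pr^{-1}K_f,\Zn)\big)$ contains infinitely many elements of order $n$. But by Hochschild--Serre for $1\to C\to\pr^{-1}K_f\to K_f\to1$ this kernel is an extension of a subquotient of $H^{2}_{\cts}(C,\Zn)^{K_f}$ by a subquotient of $H^{1}_{\cts}(K_f,\Hom_{\cts}(C,\Zn))$, and nothing in your argument rules out the possibility that the entire kernel is accounted for by the $H^{2}_{\cts}(C,\Zn)$ term --- a priori $C$ could have no $\Zn$-quotient at all while having a large Schur-multiplier-type $H^{2}$. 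Isolating the $\Hom(C,\Zn)$ contribution is the real content of the corollary, and it requires an idea that is not present in the proposal.
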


\emph{Acknowledgement.}
I'd like to thank Lars Louder for many useful discussions.

\section{Statement of Results}

Throughout this section, we fix a simple algebraic group $G/\Q$, such that
\begin{enumerate}
	\item
	$G$ is (algebraically) simply connected;
	\item
	$G$ has positive real rank (i.e. $G(\R)$ is not compact, and arithmetic subgroups of $G$ are infinite);
	\item
	The congruence kernel of $G/\Q$ is finite
	(and hence conjecturally the real rank of $G$ is at least $2$).
\end{enumerate}
We do not assume that $G$ is absolutely simple.

We'll show that \autoref{thm-simplified}
is a consequence of the following result.

\begin{theorem}
	\label{existence}
	Let $G/\Q$ be as described above
	and let $\Gamma$ be an arithmetic subgroup of $G(\Q)$.
	For every positive integer $n$ there is a
	subgroup $\Upsilon$ of finite index in
	$\Gamma$ and a central extension
	\[
		1 \to \Zn \to \tilde\Upsilon \to \Upsilon \to 1,
	\]
	such that $\tilde\Upsilon$ is not residually finite.
	More precisely, every subgroup of finite index in $\tilde\Upsilon$
	contains the subgroup $\Zn$.
\end{theorem}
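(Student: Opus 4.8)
The plan is to construct the required extension by pulling back a suitable central extension of the adelic or congruence completion of $G(\Q)$. The key object is the group $\bar H^2(\Z/n) = \limd{\Gamma} H^2(\Gamma,\Z/n)$ described in the abstract, together with the action of the arithmetic completion $\hGQ$. First I would observe that, by the universal coefficient theorem and the finiteness of the congruence kernel, $H^2$ of an arithmetic group decomposes into a "congruence part" (classes pulled back from finite quotients, i.e. from the profinite completion $\hat\Gamma = \overline{\Gamma}$ via the congruence completion) and a non-congruence part. A class $\alpha \in H^2(\Upsilon,\Z/n)$ that does \emph{not} come from the congruence completion gives a central extension $1 \to \Z/n \to \tilde\Upsilon \to \Upsilon \to 1$ with the property that the image of $\Z/n$ is contained in every finite-index subgroup: indeed, if some finite-index $\tilde\Upsilon' \le \tilde\Upsilon$ met $\Z/n$ trivially, then replacing $\Upsilon$ by the image $\Upsilon'$ of $\tilde\Upsilon'$ and restricting $\alpha$ would express (a multiple of) $\alpha$ as coming from a genuine finite quotient, contradicting non-congruence. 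So the task reduces to producing, for each $n$, a finite-index $\Upsilon \le \Gamma$ and an \emph{order-$n$} class in $H^2(\Upsilon,\Z/n)$ that is not in the image of $H^2_{\mathrm{cong}}$.

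The source of such classes is the congruence kernel $C = \ker(\hGQ \to \overline{G(\Q)})$, which is finite and central by hypothesis (3), together with strong approximation (hypothesis on $G$ being simple, simply connected with positive real rank — note the congruence subgroup problem is what makes $C$ finite). The exact sequence $1 \to C \to \hGQ \to \overline{G(\Q)} \to 1$ is a central extension of the congruence completion by the finite group $C$, classified by a class in $H^2(\overline{G(\Q)}, C)$ — and this class is exactly the obstruction measuring the failure of the congruence subgroup property. The strategy is: choose a surjection $C \twoheadrightarrow \Z/n$ (here I would invoke the corollary mentioned at the end of the introduction, or rather prove the relevant case of it directly — that $C$ surjects onto $\Z/n$ for every $n$ large enough, possibly after shrinking; since $C$ is a nontrivial finite group one may need to pass to finite-index subgroups where the fundamental class has the right order), push the extension class forward to get $0 \ne \beta \in H^2(\overline{G(\Q)},\Z/n)$, restrict $\beta$ along an arithmetic subgroup $\Upsilon \le \Gamma$ (chosen fine enough that the restriction survives and has order $n$), and pull back the central extension to $\Upsilon$. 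The resulting $\tilde\Upsilon$ is the desired non-residually-finite extension: the class $\beta$ restricted to $\Upsilon$ is by construction \emph{not} pulled back from a finite quotient of $\Upsilon$ — any finite quotient through which it factored would give a finite-index normal subgroup of $\hGQ$ containing $C$'s image, i.e. would split the congruence kernel in that direction, contradicting that $\beta \ne 0$ as a class on the full congruence completion.

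The main obstacle I expect is controlling the \emph{order} of the restricted class and ensuring non-vanishing after restriction to a specific arithmetic (rather than congruence) subgroup. Restriction maps in group cohomology can kill classes or lower their order, so one must argue — using a corestriction/transfer argument, or using that $\overline{G(\Q)}$ is the direct limit (over $\Upsilon$) of $\hat\Upsilon$'s and that $H^2(\overline{G(\Q)},\Z/n) = \limd{\Upsilon} H^2(\hat\Upsilon,\Z/n)$ — that for $\Upsilon$ small enough the restriction of $\beta$ is nonzero of order exactly $n$. The transfer argument gives $\Corest \circ \Rest = [\Gamma:\Upsilon] \cdot \mathrm{id}$, which is not immediately helpful since we cannot choose the index coprime to $n$; instead the cleanest route is the direct-limit identification, which makes nonvanishing automatic for some $\Upsilon$ and then one adjusts $n$ (or uses divisibility of $C$) to pin down the order. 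A secondary technical point is verifying the elementary group-theoretic lemma above — that a non-congruence cohomology class yields an extension in which the central $\Z/n$ lies in every finite-index subgroup — which is a diagram chase but must be done carefully with the $\Z/n$ coefficients and the possibility of finite-index subgroups that are not normal.
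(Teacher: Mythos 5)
Your opening reduction --- that it suffices to exhibit, for each $n$, a class of order $n$ in the direct limit $\bar H^2(\Zn)$, i.e.\ a class on some $\Upsilon$ whose restriction to every finite-index subgroup still has order $n$ --- is essentially the paper's own reduction of \autoref{existence} to \autoref{reform1} (though your criterion should be ``survives in the direct limit,'' not ``does not come from the congruence completion'': a class can fail to come from a finite quotient and still die on restriction to some finite-index subgroup, and it is the latter that decides residual finiteness). The construction you then propose for such classes, however, cannot work, for two independent reasons. First, the congruence kernel $C$ is \emph{finite} by hypothesis, and for many groups covered by the theorem (e.g.\ $\SL_n/\Q$, $n\ge 3$) it is trivial; a fixed finite cyclic group does not surject onto $\Zn$ for arbitrary $n$. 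The corollary you invoke from the introduction is a \emph{conditional consequence} of the theorem for real-rank-one groups, where Serre conjectures $C$ is infinite; under the theorem's standing hypothesis of finite congruence kernel it is precisely unavailable. Second, and fatally even when $C$ does surject onto $\Zn$: the extension $1\to C\to\hGQ\to G(\Af)\to 1$ splits over $G(\Q)$, because $G(\Q)$ maps canonically into its arithmetic completion $\hGQ$ compatibly with its inclusion into $G(\Af)$. Consequently your pushed-forward class $\beta$ restricts to \emph{zero} on every arithmetic subgroup $\Upsilon\subset G(\Q)$, and the pulled-back extension $\tilde\Upsilon$ is split --- in particular residually finite. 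No transfer or direct-limit refinement can revive a class that vanishes identically on $G(\Q)$; the failure of the congruence subgroup property is visible only on the completion, never on the discrete group.

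The paper's classes come from an entirely different source. After identifying $\bar H^{2}(\Zn)$ with $H^{2}(\GammaL,\cCLZn)$ and running the Hochschild--Serre spectral sequence of \autoref{proposition spectral sequence}, one obtains an exact sequence $H^{2}(\GammaL,\Zn)\to\bar H^{2}(\Zn)^{L}\to H^{3}_{\cts}(L,\Zn)\to H^{3}(\GammaL,\Zn)$ whose outer terms are finite; so it suffices to produce infinitely many elements of order $n$ in the degree-\emph{three} continuous cohomology of the adelic level $L$. These are supplied by $H^{3}(G(\Fp),\Zn)$ for infinitely many tame primes $p$, which is \autoref{finite lie}, proved by exhibiting the invariant class $\sum_{\alpha\in\Phi}\alpha\cup\alpha$ on a split maximal torus and applying the Cartan--Eilenberg corestriction machinery. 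If you want to repair your write-up, it is this $H^{3}_{\cts}$ of the completion --- not the congruence kernel --- that must be fed into your (correct) reduction.
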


\begin{proof}[Proof of \autoref{thm-simplified}]

We'll now show that \autoref{thm-simplified} is
a consequence of \autoref{existence}.
Let $\Gamma$ be an arithmetic group with finite congruence kernel.
By \autoref{existence}, there is a subgroup
$\Upsilon$ of finite index in $\Gamma$ and a
central extension $\tilde\Upsilon$ of $\Gamma$
by $\Zn$, such that every subgroup of finite
index in $\tilde\Upsilon$ contains $\Zn$.
Let $\sigma\in H^2(\Upsilon,\Zn)$ be the
cohomology class corresponding to this extension.
By Shapiro's lemma, there is an isomorphism
$H^2(\Upsilon,\Zn) \cong H^2(\Gamma, A)$,
where $A$ is the induced representation
$A=\ind_\Upsilon^\Gamma(\Zn)$.
We'll write $\Sigma$ for the image of $\sigma$
in $H^2(\Gamma,A)$.
Corresponding to the cohomology class $\Sigma$, there is a (non-central) extension $\tilde\Gamma$ of $\Gamma$ by $A$.
These two group extensions are related by the following
commutative diagram:
\[
	\begin{tikzcd}
		1 \rar & A \rar & \tilde\Gamma \rar{\pr} & \Gamma \rar & 1 & (\Sigma)\\
		1 \rar & \ind_\Upsilon^\Gamma(\Zn) \rar \uar[equal] \dar[two heads] & \pr^{-1}(\Upsilon) \rar \uar[hook] \dar[two heads] &		\Upsilon \rar \uar[hook] \dar[equal] & 1\\
		1 \rar & \Zn \rar& \tilde\Upsilon \rar & \Upsilon \rar & 1 & (\sigma)\\
	\end{tikzcd}
\]

Suppose for the sake of argument that $\tilde\Gamma$ is residually finite.
Hence the subgroup $\pr^{-1}\Upsilon$ is residually finite.
There is therefore a subgroup $\Phi \subset \tilde\Upsilon$ of finite index, such that $\Phi \cap A$ is trivial.
The image of $\Phi$ in $\tilde\Upsilon$ is then a subgroup of finite index in $\tilde\Upsilon$, whose intersection with $\Zn$ is trivial.
This is a contradiction.
\end{proof}

\subsection{Some refinements of \autoref{existence}}

Let $G/\Q$ be simple, simply connected, and have real rank at least $1$.
Furthermore assume that the congruence kernel of $G$ is finite (and hence, conjecturally at least, the real rank of $G$ is at least $2$).
Fix an arithmetic subgroup $\Gamma$ of $G$.


Suppose that we have a central extension of $\Gamma$ by $\Zn$ as follows:
\[
	1 \to \Zn \to \tilde\Gamma \to \Gamma \to 1.
\]
We shall write $\sigma\in H^{2}(\Gamma,\Zn)$ for the cohomology class of
this extension.
Suppose for a moment that $\tilde\Gamma$ is residually finite.
We can then find a subgroup $\Upsilon \subset \tilde\Gamma$ of finite index,
such that the intersection of $\Upsilon$ with $\Zn$ is trivial.
Hence $\Upsilon$ projects bijectively onto a subgroup of $\Gamma$,
which we shall also call $\Upsilon$.
The preimage of $\Upsilon$ in $\tilde\Gamma$ is the direct sum $\Zn\oplus\Upsilon$.
As a result of this, we know that the restriction of $\sigma$ to $\Upsilon$ is trivial.

This means that in order to construct a non-residually finite extension
of $\Gamma$, we need a non-zero element of the direct limit
\[
	\bar H^{2}(\Zn)
	=
	\limd{\Upsilon} H^{2}(\Upsilon,\Zn),
\]
where $\Upsilon$ runs over subgroups of finite index in $\Gamma$.
The argument above shows that \autoref{existence}
 is implied by the following result.

\begin{theorem}
	\label{reform1}
	For every positive integer $n$, there are infinitely many elements of order $n$ in
	$\bar H^{2}(\Zn)$.
\end{theorem}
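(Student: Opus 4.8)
The plan is to reduce the statement to the existence of one well-understood non-residually-finite central extension by $\Zn$ and then to propagate it using the continuous action of $\hGQ$ on $\bar H^2(\Zn)$.

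\emph{Reduction via the congruence subgroup property.} Since $G$ is simply connected with $G(\R)$ noncompact, strong approximation shows that $G(\Q)$ is dense in $\GAf$; as the congruence kernel is finite, the profinite completion $\hat\Gamma$ of any arithmetic subgroup $\Gamma$ is an extension of an open compact $K\subseteq\GAf$ by the (finite) congruence kernel. Hence, up to a bounded discrepancy, the directed system of finite-index subgroups of $\Gamma$ is cofinal with that of congruence subgroups $\Gamma_K$, and $\bar H^2(\Zn)$ may be computed along the congruence tower. In this language a class $\sigma\in H^2(\Gamma_K,\Zn)$ represents a non-zero element of $\bar H^2(\Zn)$ exactly when the associated central extension of $\Gamma_K$ by $\Zn$ stays non-split on every deeper congruence subgroup, and it has order $n$ exactly when the same holds for its reduction modulo every proper divisor of $n$.

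\emph{One class of order $n$.} When $\pi_1(G(\R))$ admits $\Zn$ as a quotient --- which happens, for every $n$, whenever $G(\R)$ has a factor of Hermitian type --- Deligne's construction applies directly: pull the universal-covering extension of $G(\R)$ back along $\Gamma\hookrightarrow G(\R)$ and push out along $\pi_1(G(\R))\twoheadrightarrow\Zn$ to get a compatible family over the congruence tower, hence a class in $\bar H^2(\Zn)$; the splitting over $G(\Q)$ of the metaplectic cover (which exists because $G$ is simply connected) together with the congruence subgroup property, exactly as in Deligne's argument, prevents any deeper congruence subgroup from splitting the extension, and a correct choice of the push-out map makes the class of order exactly $n$. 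In general, when $\pi_1(G(\R))$ is too small to see all of $\Zn$ (for instance $G=\SL_m$, $m\ge 3$, where it is $\mu_2$), one needs instead a central extension of $\GAf$, or of $\hGQ$, by $\Zn$ that becomes trivial over $G(\Q)$ but restricts non-trivially to every congruence subgroup; producing such extensions for all $n$, of order exactly $n$, and verifying non-vanishing in the colimit, is where the congruence subgroup property enters in an essential way and is the step I expect to be the main obstacle. (When $G(\R)$ has a simple factor with $\mathrm{rank}\,G-\mathrm{rank}\,K=1$ one can extract order-$n$ classes from the exponentially growing torsion in the homology of congruence covers via the universal-coefficients sequence, but finding a single source that works uniformly for all $G$ and all $n$ is the real difficulty.)

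\emph{From one class to infinitely many.} The key structural input is that $G$ is simple and simply connected, so at every prime $p$ at which $G$ is isotropic the group $G(\Qp)$ has no proper subgroup of finite index (Kneser--Tits together with the Margulis--Prasad normal subgroup theorem); consequently every proper open subgroup of such a $G(\Qp)$ has infinite index in it. Since $\bar H^2(\Zn)$ is a discrete torsion group on which $\hGQ$ acts continuously, the stabiliser of any element is open; therefore any element not fixed by all these $G(\Qp)$ has infinite $\hGQ$-orbit, and, since the members of an orbit are permuted by automorphisms, they all have the same order. It thus suffices to show that the class of the previous step (or a twist of it by local data at a varying finite place) is not $\hGQ$-invariant; alternatively one invokes the computation, carried out later in the paper, that $\bar H^2(\Zl)^{\hGQ}$ is finitely generated, which already forces most order-$n$ classes to be non-invariant and hence to have infinite orbits.
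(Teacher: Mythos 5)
There is a genuine gap, and you have located it yourself: producing even a single class of order $n$ in $\bar H^{2}(\Zn)$ for a general $G$ is exactly the step you defer as ``the main obstacle,'' and without it the proposal proves nothing. Deligne's construction cannot be patched to cover the general case. First, many of the groups in question (e.g.\ $\SL_m/\Q$ for $m\ge 3$, or $\Spin(r,s)$ with $r\ge s\ge 3$) have finite $\pi_1(G(\R))$ with no $\Zn$-quotient for large $n$. Second, even in the Hermitian case Deligne's argument only controls the metaplectic $\mu_2$-quotient: it shows that finite-index subgroups of $\tilde\Gamma$ contain $\ker\big(\pi_1(G(\R))\to\mu_2\big)$, so the pushed-out class is only certified to survive modulo $2$, not to have order $n$ in the direct limit; your assertion that ``a correct choice of the push-out map makes the class of order exactly $n$'' is unsupported. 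Your fallback suggestions (an extension of $\GAf$ by $\Zn$ trivial on $G(\Q)$; torsion growth in congruence covers) are left unexecuted. Your amplification step is also not self-contained: it needs a class that is \emph{not} $\pr^{-1}(G(\Q_q))$-invariant, and the appeal to the finite generation of $\bar H^2(\Zl)^{\hGQ}$ is circular, since it presupposes the infinitude of classes you are trying to establish.

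For comparison, the paper fills this gap by a mechanism with no analogue in your proposal. Two Hochschild--Serre spectral sequences for $\GammaL\times L$ acting on $\cCLZn$ (continuous $\Zn$-valued functions on an $S$-arithmetic level $L$) give an exact sequence
\[
H^{2}(\GammaL,\Zn)\to \bar H^{2}(\Zn)^{L}\to H^{3}_{\cts}(L,\Zn)\to H^{3}(\GammaL,\Zn),
\]
whose outer terms are finite by Borel--Serre. So it suffices to find infinitely many order-$n$ elements of $H^{3}_{\cts}(L,\Zn)$, and these come from $H^{3}(G(\Fp),\Zn)$ for the infinitely many tame primes $p$ splitting in a suitable number field (\autoref{finite lie}): the class is the Cartan--Eilenberg corestriction from the split torus $T(\Fp)$ of the invariant class $\sum_{\alpha\in\Phi}\alpha\cup\alpha$, and the splitting condition on $p$ pins down its exact order. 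This produces infinitely many classes at once, all of them $L$-invariant (which is the stronger statement of \autoref{reform2}), so no orbit argument is needed --- and indeed an orbit argument would work against the invariance the paper actually wants.
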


We shall actually prove a stronger result, which needs a little more notation to state.
We shall write $\hGQ$ for the \emph{arithmetic completion} of the group $G(\Q)$, i.e.
\[
	\hGQ = \limp{\Upsilon} G(\Q)/\Upsilon,
\]
where $\Upsilon$ runs through the subgroups of finite index in $\Gamma$.
There is a natural projection $\hGQ \to G(\Af)$, and the congruence kernel is, by definition, the kernel of this map. This means that we have an extension of topological groups
\[
	1 \to \Cong(G) \to \hGQ \stackrel{\pr}{\to} G(\Af) \to 1.
\]
The group $\hGQ$ acts smoothly on $\bar H^2(\Zn)$.

Let $S$ be a finite set of prime numbers.
By an \emph{$S$-arithmetic level}, we shall mean an open subgroup $L$ of $\hGQ$ of the form
\[
	L = \pr^{-1} \left(
	\prod_{p\in S} G(\Qp) \times L^S \right),
	\qquad
	L^S
	=
	\prod_{p\not\in S} K_p,
\]
where each $K_p$ is a compact open subgroup of $G(\Qp)$, chosen so that $L$ is open in $\hGQ$.

\begin{theorem}
	\label{reform2}
	Let $L$ be an $S$-arithmetic level in $\hGQ$ for some finite set of primes $S$.
	For every positive integer $n$, there are infinitely many elements of order $n$ in
	$\bar H^{2}(\Zn)^L$.
\end{theorem}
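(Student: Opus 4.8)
The plan is to realise $\bar H^2(\Zn)^L$ as the $E_2^{0,2}$-term of the Cartan--Leray spectral sequence of the tower of finite-index subgroups of an $S$-arithmetic group, and to show that the transgression $d_3$ out of this term is almost surjective onto $H^3_{\cts}$ of a compact open subgroup of $G(\Af)$ --- a group which is \emph{infinite}, with infinitely many elements of order $n$, because $H^3(G(\Fp),\Zn)\neq 0$ for a positive density of primes $p$ (ultimately because $K_3(\Fp)\cong\Z/(p^2-1)$). The hypotheses enter as follows: by~(3) and Serre's theorem that a finite congruence kernel is central we have $1\to\Cong(G)\to\hGQ\xrightarrow{\pr}G(\Af)\to1$ with $\Cong(G)$ finite; and strong approximation (valid since $G$ is simply connected with $G(\R)$ non-compact) makes the relevant locally symmetric spaces connected and makes congruence subgroups surject onto $G(\Fp)$ for almost all $p$. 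Put $\Upsilon_0=G(\Q)\cap L$, an $S$-arithmetic subgroup of $G(\Q)$, of type $FP_\infty$ so that $H^*(\Upsilon_0,\Zn)$ is finite; its profinite completion is, modulo the finite group $\Cong(G)$, a compact open subgroup $K:=\prod_{p\notin S}K_p$ of $G(\AfS)$, with $K_p=G(\Z_p)$ for almost all $p$. Standard comparisons in the theory of completed cohomology identify $\bar H^2(\Zn)^L$ with $\bar H^2_S(\Zn)^{K}$, where $\bar H^2_S(\Zn):=\varinjlim_{\Upsilon'}H^2(\Upsilon',\Zn)$ is taken over finite-index subgroups $\Upsilon'$ of $\Upsilon_0$.

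\textbf{The spectral sequence.} The tower $\{B\Upsilon'\}_{\Upsilon'\subseteq\Upsilon_0}$ of finite covers of $B\Upsilon_0$ has a pro-cover which is an $\widehat{\Upsilon_0}$-torsor with total cohomology $\bar H^*_S(\Zn)$, giving the Cartan--Leray spectral sequence
\[
E_2^{p,q}=H^p_{\cts}\!\big(\widehat{\Upsilon_0},\,\bar H^q_S(\Zn)\big)\ \Longrightarrow\ H^{p+q}(\Upsilon_0,\Zn).
\]
Now $\bar H^0_S(\Zn)=\Zn$ (connectedness) and $\bar H^1_S(\Zn)=0$ --- for \emph{any} finitely generated group a homomorphism to $\Zn$ dies on the finite-index subgroup generated by commutators and $n$th powers --- so $E_2^{1,1}=H^1_{\cts}(\widehat{\Upsilon_0},\bar H^1_S(\Zn))=0$, the term $E_3^{0,2}=E_2^{0,2}=\bar H^2_S(\Zn)^{\widehat{\Upsilon_0}}=\bar H^2(\Zn)^L$ receives no differential and only emits $d_3\colon E_3^{0,2}\to E_3^{3,0}=E_2^{3,0}=H^3_{\cts}(\widehat{\Upsilon_0},\Zn)$, with kernel $E_\infty^{0,2}\subseteq H^2(\Upsilon_0,\Zn)$ finite. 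Since $E_\infty^{3,0}=H^3_{\cts}(\widehat{\Upsilon_0},\Zn)/\im(d_3)$ is the image of the inflation map $H^3_{\cts}(\widehat{\Upsilon_0},\Zn)\to H^3(\Upsilon_0,\Zn)$ and the latter group is finite, $\im(d_3)=\ker(\mathrm{infl})$ is cofinite in $H^3_{\cts}(\widehat{\Upsilon_0},\Zn)$.

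\textbf{Infinitely many classes of order $n$.} For $p\notin S$ with $K_p=G(\Z_p)$ and $p\nmid n$, the Hochschild--Serre sequence of $1\to G(1+p\Z_p)\to G(\Z_p)\to G(\Fp)\to1$ collapses (pro-$p$ kernel, prime-to-$p$ coefficients), so $H^3_{\cts}(G(\Z_p),\Zn)\cong H^3(G(\Fp),\Zn)$. By Quillen's computation of $K_*(\Fp)$ and the superperfectness of $G(\Fp)$ for large $p$ (Steinberg), $H_3(G(\Fp),\Z)$ is finite of order growing with $p$ and divisible by $n$ for a positive density of $p$ (for $G=\SL_m$, $m$ large, $H_3(\SL_m(\Fp),\Z)\cong K_3(\Fp)\cong\Z/(p^2-1)$), so $H^3(G(\Fp),\Zn)$ contains an element of order $n$ for all $p$ in an infinite set $P$. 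For $p\in P$ choose $e_p\in H^3_{\cts}(\widehat{\Upsilon_0},\Zn)$ of order $n$ in the $p$th Künneth summand. The inflation map has finite target, so by pigeonhole infinitely many of the $e_p$ share a common image; fixing one such index $p_1$, the elements $e_{p_1}-e_p$ (over the infinitely many remaining such $p$) are pairwise distinct, of order exactly $n$, and lie in $\ker(\mathrm{infl})=\im(d_3)$. As $\bar H^2(\Zn)^L$ is $n$-torsion, any $d_3$-preimage of an order-$n$ element again has order exactly $n$, and distinct images have distinct preimages; hence $\bar H^2(\Zn)^L$ contains infinitely many elements of order $n$, which is \autoref{reform2}.

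\textbf{Main obstacle.} The one genuinely non-formal step is the first paragraph: the comparison identifying $\bar H^2(\Zn)^L$ with $\bar H^2_S(\Zn)^{\widehat{\Upsilon_0}}$ together with the construction and degeneracy of the Cartan--Leray spectral sequence in the non-cocompact, $S$-arithmetic situation. This is exactly where hypothesis~(3) and strong approximation are indispensable: without finiteness of the congruence kernel, $\widehat{\Upsilon_0}$ is not (essentially) a compact open subgroup of $G(\Af)$, $\bar H^1_S$ need not vanish, and $H^3_{\cts}(\widehat{\Upsilon_0},\Zn)$ is no longer governed by the finite groups $G(\Fp)$. The input $H^3(G(\Fp),\Z)\neq 0$ for infinitely many $p$ is classical $K$-theory, and the remainder is the elementary pigeonhole argument above. (The sharper results of the paper --- for instance the computation of $\bar H^2(\Zl)^{\hGQ}$ --- should follow by tracking the $\hGQ$-action through the same spectral sequence.)
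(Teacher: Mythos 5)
Your overall architecture is the same as the paper's: realise $\bar H^2(\Zn)^L$ as an $E_2^{0,2}$-term, use $\bar H^0=\Zn$ and $\bar H^1=0$ to see that the only differential out of it is $d_3$ into $H^3_\cts$ of a completion, note that $H^2$ and $H^3$ of the $S$-arithmetic group are finite (Borel--Serre), and manufacture infinitely many order-$n$ classes in the $H^3_\cts$ term from the finite groups $G(\Fp)$. Your closing pigeonhole argument is fine. But there are two genuine gaps.

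First, the ``standard comparison'' identifying $\bar H^2(\Zn)^L$ with $\bar H^2_S(\Zn)^{\widehat{\Upsilon_0}}$ is not standard, and it is exactly where the work lies. The group in the statement is the direct limit of $H^2(\Upsilon,\Zn)$ over \emph{arithmetic} subgroups, acted on by the locally compact, non-profinite group $L$, which contains the factors $G(\Qp)$ for $p\in S$; your $E_2^{0,2}$-term is the direct limit over \emph{finite-index subgroups of the $S$-arithmetic group} $\Upsilon_0$, acted on by the profinite completion $\widehat{\Upsilon_0}$. These are limits over different, non-cofinal families: no finite-index subgroup of $\Upsilon_0$ is arithmetic, and no arithmetic subgroup has finite index in $\Upsilon_0$, so the two coefficient modules (locally constant functions on $\widehat{\Upsilon_0}$ versus continuous functions on $L$) are genuinely different $\Upsilon_0$-modules. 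The paper never forms your $\bar H^2_S$: it proves in \autoref{reform4} that $H^\bullet(\GammaL,\cCLZn)$ is independent of the open subgroup $L\le\hGQ$ using Shapiro's lemma in Casselman--Wigner continuous cohomology, and then runs the Hochschild--Serre spectral sequence for $\GammaL\times L$ acting on $\cCLZn$ (\autoref{proposition spectral sequence}), so that $E_2^{r,s}=H^r_\cts(L,\bar H^s(\Zn))$ with $L$ itself, not $\widehat{\Upsilon_0}$, in the base. You must either prove your comparison or rebuild the spectral sequence with $L$.

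Second, the assertion that $H^3(G(\Fp),\Zn)$ contains an element of order exactly $n$ for infinitely many $p$ is not ``classical $K$-theory'' for a general simple simply connected $G$; you justify it only for $\SL_m$ via Quillen, and in general you would need to control the exponent, not merely the order, of $H_3(G(\Fp),\Z)$. This is precisely \autoref{finite lie}, to which the paper devotes Section 5: one restricts to a split maximal torus $T(\Fp)$, shows that the Killing-form class $q=\sum_{\alpha\in\Phi}\alpha\cup\alpha$ in $H^4(T(\Fp),\Z)\otimes(\Fpx)^{\otimes 2}$ is a Cartan--Eilenberg invariant class of order $(p-1)/e$, compares $\big|[G(\Fp):T(\Fp)]\big|_l$ with $|d_1\cdots d_r|_l$ via the order formula $|G(\Fp)|=p^N\prod_i(p^{d_i}-1)$, and uses Chebotarev to choose $p$ split in a field containing suitable roots of unity. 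Without this (or an equivalent), your proof covers only the special linear case.
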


\autoref{reform2} will be proved in section 4.
The proof requires a technical result on the cohomology of finite groups of Lie type, which is proved in section 5.
By modifying the argument slightly, one can also prove the following result.

\begin{theorem}
	Let $n$ be a positive integer.
	Then there are infinitely many elements $\sigma$ of order $n$ in $\bar H^2(\Zn)$ with the following property.
	There is a prime number $p$ depending on $\sigma$,
	such that for all primes $q\ne p$ the element $\sigma$ is fixed
	by $\pr^{-1}\left( G(\Q_q) \right)$.
\end{theorem}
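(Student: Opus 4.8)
The plan is to modify the proof of \autoref{reform2} by taking the set $S$ to be a single prime, and then tracking more carefully where the ``infinitely many'' elements come from. Recall that \autoref{reform2} produces infinitely many order-$n$ elements of $\bar H^2(\Zn)^L$ for any $S$-arithmetic level $L$; the content of the present statement is that with an appropriate choice we can arrange $\sigma$ to be invariant not merely under one fixed open subgroup, but under the full local groups $\pr^{-1}(G(\Q_q))$ for \emph{all} primes $q$ away from a single bad prime $p$. First I would examine the construction behind \autoref{reform2}: the elements of $\bar H^2(\Zn)^L$ are built from classes that are ``concentrated'' at the primes in $S$, in the sense that they become trivial after restriction to a deep enough congruence subgroup at the primes outside $S$, while their non-triviality is detected by the cohomology of finite groups of Lie type at the primes in $S$ (via the technical result of section~5). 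Taking $S = \{p\}$, the resulting class should automatically be unramified --- hence $\pr^{-1}(G(\Q_q))$-invariant --- at every $q \ne p$, because the $q$-component of the level can be enlarged to all of $G(\Q_q)$ without affecting the class.

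The key steps, in order, would be: (1) Fix $n$ and choose a single prime $p$ (subject to mild conditions, e.g. $p$ unramified for $G$, $p$ large enough that the relevant finite group of Lie type has the cohomology needed by the section~5 lemma, and $p \equiv 1 \bmod n$ or similar to get $n$-torsion). (2) Apply (the proof of) \autoref{reform2} with $S = \{p\}$ to produce a class $\sigma \in \bar H^2(\Zn)^L$ of order $n$, where $L = \pr^{-1}(G(\Q_p) \times \prod_{q \ne p} K_q)$. (3) Show that $\sigma$ is actually fixed by the larger group $\pr^{-1}(G(\Q_q) \times \prod_{q' \ne p} K_{q'})$ for each $q \ne p$; since $\bar H^2(\Zn)$ is a smooth $\hGQ$-module, invariance under the subgroups generated by enlarging one local factor at a time gives invariance under $\pr^{-1}(G(\Q_q))$ for all $q \ne p$ simultaneously (the subgroups $\pr^{-1}(G(\Q_q))$ for varying $q$ commute with each other inside $\hGQ$, and each fixes $\sigma$). (4) Produce infinitely many such $\sigma$: for instance by varying $p$ over an infinite set of admissible primes, noting that classes attached to distinct primes $p$ are linearly independent because they have different ramification behaviour (a class ramified only at $p$ cannot coincide with one ramified only at $p'$ unless both are unramified everywhere, and the latter are controlled by $\bar H^2(\Zn)^{\hGQ}$, which one shows does not contain the constructed classes --- or, more simply, one fixes $p$ and varies the class within the $p$-adic part as in \autoref{reform2}, then checks each such class is $\pr^{-1}(G(\Q_q))$-invariant for $q \ne p$).

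The main obstacle I expect is step~(3): verifying that enlarging the level at a good prime $q$ does not kill the class, i.e.\ that the class genuinely lies in the $\pr^{-1}(G(\Q_q))$-invariants rather than merely in the $K_q$-invariants. This requires understanding the construction of section~4 well enough to see that the ``building block'' producing $\sigma$ only involves the prime $p$, and that the cohomology inflation/restriction maps along $K_q \subset G(\Q_q)$ are compatible with it --- concretely, that $\sigma$ lifts to a class on the level with $K_q$ replaced by $G(\Q_q)$, which amounts to a vanishing statement for a relevant local cohomology group or a direct-limit argument showing the inflation map $H^2$ at level $G(\Q_q)$ surjects onto the $G(\Q_q)$-fixed part at level $K_q$. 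A secondary subtlety is ensuring, in step~(1), that the admissibility conditions on $p$ (needed for the section~5 lemma and for obtaining exact order $n$) leave infinitely many primes available, which should follow from Chebotarev together with the explicit nature of those conditions.
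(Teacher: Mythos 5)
The paper itself offers no proof of this statement (it is introduced with ``by modifying the argument slightly''), so I am comparing your proposal against the evident intended modification of the proof of \autoref{reform2}. Your overall shape is right --- produce a class ``concentrated'' at a single prime $p$ and show it is invariant under $\pr^{-1}(G(\Q_q))$ for every $q\ne p$ --- but you have inverted the role of $S$. In the proof of \autoref{reform2}, the order-$n$ classes in $\bar H^2(\Zn)^L$ are detected by $H^3(G(\F_r),\Zn)$ for \emph{tame} primes $r$, and tame primes are by definition \emph{not} in $S$: the class attached to $r$ is inflated along $L \to K_r \to G(\F_r)$, which requires the local factor at $r$ to be the compact group $K_r$, not $G(\Q_r)$. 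Your step (2) takes $S=\{p\}$ and asserts that the non-triviality is ``detected by the cohomology of finite groups of Lie type at the primes in $S$''; this is the opposite of the construction. With $S=\{p\}$ you obtain classes supported at tame primes $r\ne p$, and what you gain is invariance under $\pr^{-1}(G(\Q_p))$ --- the one local group the theorem does not ask about --- while gaining nothing at the primes $q\ne p$. All of the content is then deferred to your step (3), for which you give no mechanism beyond saying it requires understanding section 4. The correct setup is: fix a tame prime $p$ and the class $\tau_p\in H^3(G(\Fp),\Zn)$ of order $n$ supplied by \autoref{finite lie}; for each $q\ne p$ run the argument of \autoref{reform2} with $q\in S$ and $p\notin S$, noting that $p$ remains tame for $L_S$, so $\tau_p$ still contributes a direct summand of $H^3_\cts(L_S,\Zn)$ and hence (modulo the finite groups $H^2(\Gamma(L_S),\Zn)$ and $H^3(\Gamma(L_S),\Zn)$) an order-$n$ element of $\bar H^2(\Zn)^{L_S}$, which in particular is fixed by $\pr^{-1}(G(\Q_q))$.

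Even after this correction, one point you acknowledge but do not resolve remains a genuine gap: the lift of $\tau_p$ from $H^3_\cts(L_S,\Zn)$ to $\bar H^2(\Zn)^{L_S}$ is only well defined modulo the image of $H^2(\Gamma(L_S),\Zn)$, and a priori the lift one obtains for $S=\{q\}$ may differ from the one for $S=\{q'\}$ by such a correction term. The theorem requires a \emph{single} element $\sigma$ fixed by $\pr^{-1}(G(\Q_q))$ for all $q\ne p$ simultaneously, so one must argue that a common lift exists (e.g.\ by comparing the exact sequences for nested levels $L_\emptyset\subset L_S$ and controlling the finite ambiguity uniformly in $S$). Your step (4) on infinitude is fine once this is settled --- varying $p$ over the infinitely many admissible split primes furnished by Chebotarev, or varying the class within $H^3(G(\Fp),\Zn)$ for fixed $p$ --- and your remark that the subgroups $\pr^{-1}(G(\Q_q))$ commute is unnecessary: a collection of subgroups each fixing $\sigma$ generates a subgroup fixing $\sigma$, with no commutativity needed.
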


\subsection{Virtual lifting to characteristic zero.}
Let $l$ be a prime number.
Any central extension of $\Gamma$ by $\Z/l^{t+1}$ gives rise to a central extension by $\Z/l^t$.
We'll say that the extension of $\Gamma$ by $\Z/l^r$ \emph{virtually lifts to characteristic zero}
if for every $t>r$ there is a arithmetic subgroup $\Upsilon_t$ of $\Gamma$ and a central extension of $\Upsilon_t$ by $\Z/l^t$, 
 such that the extensions fit into a commutative diagram of the following form.
\[
	\begin{tikzcd}
		1 \rar & \Z/l^{t} \rar \dar[two heads]
		&\tilde\Upsilon_t \rar \dar 
		&\Upsilon_t \rar \dar[hook]
		& 1 \\
		1 \rar & \Z/l^r \rar &\tilde\Gamma \rar
		& \Gamma \rar &1
	\end{tikzcd}
\]
Here the map $\Z/l^{t} \to \Z/l^r$ is the usual reduction map, and the map $\Upsilon_t \to \Gamma$ is the inclusion.

Equivalently, an element of $\bar H^2(\Z/l^r)$
virtually lifts to characteristic zero if
it is in the image of the following group.
\[
	\bar H^{2}(\Zl)
	=
	\limp{t} \bar H^{2}(\Zlt).
\]
There is a continuous action of $\hGQ$ on the cohomology group $\bar H^2(\Z_l)$.
Our next result will show that
there are indeed families of non-residually finite central extensions, which virtually lift to
characteristic zero.
Before stating the result we'll need a little notation.
The group $G \times \R$ is semi-simple over $\R$,
and decomposes as a product of finitely many simple groups $G_i/\R$.
We'll say that a simple group $G_i/\R$ is of \emph{complex type} if $G_i$ is the restriction of scalars of a group defined over $\C$, or equivalently if $G_i \times \C$ is a product of two simple groups; otherwise we say that $G_i$ is of real type.
We'll write $b_\R$ for the number of simple factors if $G\times \R$ of real type and $b_\C$ for the number of simple factors if $G\times \R$ or complex type.
We'll also write $m$ for the dimension
of the centre of a maximal compact subgroup
$K_\infty \subset G(\R)$.

\begin{theorem}
	\label{reform3}
	The group $\bar H^{2}(\Zl)^{\hGQ}$ is isomorphic to $\Zl^c$ for some positive integer $c$.
	More precisely, $c$ is in the range
	\[
		b_\R+b_\C+m
		\le
		c
		\le
		b_\R+2b_\C+m,
	\]
	where $b_\R$, $b_\C$ and $m$ as the integers defined above.
	In particular $\bar H^{2}(\Zl)$ is non-zero.
\end{theorem}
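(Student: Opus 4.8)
The plan is to compute $\bar H^{2}(\Zl)^{\hGQ}$ as the sum (up to finite groups, which wash out in the limit over $t$) of an \emph{archimedean} contribution, coming from the topology of the Lie group $G(\R)$, and a \emph{finite-places} contribution, coming from the cohomology of the congruence quotients of $\Gamma$. First one fixes a torsion-free arithmetic subgroup $\Gamma$, which acts freely on the contractible symmetric space $X=G(\R)/K_\infty$, so that $H^{\ast}(\Upsilon,\Zlt)=H^{\ast}(\Upsilon\backslash X,\Zlt)$ for every finite-index $\Upsilon\le\Gamma$. Taking the direct limit makes $\bar H^{2}(\Zlt)$ a smooth $\hGQ$-module, and, because the congruence kernel is finite, the $\hGQ$-action factors up to a finite central group through $G(\Af)$; strong approximation then lets one test $\hGQ$-invariance place by place. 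Assembling the Hochschild--Serre spectral sequences for the quotients $\Gamma/\Upsilon$ over the tower and comparing with the continuous (van Est) cohomology of $G(\R)$ should yield a short exact sequence
\[
	0 \longrightarrow A_{\mathrm{f}} \longrightarrow \bar H^{2}(\Zl)^{\hGQ} \longrightarrow A_{\infty} \longrightarrow 0,
\]
in which $A_{\infty}$ records the classes detected on $G(\R)$ and $A_{\mathrm{f}}$ the classes that are trivial on $G(\R)$.

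For $A_{\infty}$: a continuous central extension of the connected Lie group $G(\R)$ by $\Zlt$ is classified by $H^{2}_{\cts}(G(\R),\Zlt)$, and since $G$ is semisimple the Lie algebra cohomology $H^{2}(\fg_\R,\R)$ vanishes (Whitehead's second lemma), so up to finite groups such extensions are classified by $\Hom(\pi_{1}(G(\R)),\Zlt)$. Now $\pi_{1}(G(\R))\cong\pi_{1}(K_\infty)$ has free rank $m$, the dimension of the centre of $K_\infty$, so passing to the limit over $t$ gives $A_{\infty}\cong\Zl^{m}$; one checks by Deligne's argument (the relevant cover of $G(\R)$ is a genuine topological cover, not the real points of an algebraic covering) that each such class restricts non-trivially to every finite-index subgroup and is $\hGQ$-invariant, hence genuinely survives in $\bar H^{2}(\Zl)^{\hGQ}$. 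The complex-type factors of $G\times\R$ contribute nothing here, since a complex semisimple simply connected group is topologically simply connected.

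For $A_{\mathrm{f}}$: using the congruence subgroup property and strong approximation one identifies the surviving classes with the stable (limit-over-level) cohomology of the finite groups of Lie type $G(\Z/N)$, which splits according to the simple factors of $G\times\R$. The computation of this stable $H^{2}$ is the technical heart of the argument (and requires a separate analysis of the cohomology of finite groups of Lie type): it should show that each simple factor of real type contributes exactly one free $\Zl$-summand while each factor of complex type contributes at least one and at most two, so that $A_{\mathrm{f}}\cong\Zl^{b'}$ with $b_\R+b_\C\le b'\le b_\R+2b_\C$. This is the main obstacle: the lower bound needs enough independent metaplectic/Steinberg-type classes surviving in the limit, the upper bound needs the finite-group-of-Lie-type input, and the residual uncertainty for complex-type factors is exactly what forces the statement to give a range; $b_\R+b_\C+m$ is the value (and conjecturally the truth) one gets if the complex factors also contribute only one summand each.

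Finally, since both $A_{\infty}\cong\Zl^{m}$ and $A_{\mathrm{f}}\cong\Zl^{b'}$ are free $\Zl$-modules, the displayed extension is itself free, giving $\bar H^{2}(\Zl)^{\hGQ}\cong\Zl^{c}$ with $c=b'+m$, hence $b_\R+b_\C+m\le c\le b_\R+2b_\C+m$. In particular $c\ge b_\R+b_\C+m\ge 1$, so $\bar H^{2}(\Zl)$ is non-zero, and the surviving archimedean (and arithmetic) classes provide families of non-residually-finite central extensions that virtually lift to characteristic zero, as asserted.
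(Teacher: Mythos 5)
Your overall shape of answer is right, but the mechanism you propose for producing the non-archimedean part of $\bar H^2(\Zl)^{\hGQ}$ is not the one that works, and the step where you assert it is exactly where your argument breaks. You identify the ``finite-places'' summand $A_{\mathrm f}$ with the stable cohomology of the finite groups of Lie type $G(\Z/N)$. For $\Zl$-coefficients this is wrong: for every prime $p\ne l$ the contribution of the congruence tower at $p$ dies in the projective limit over $t$, because $H^3(G(\Fp),\Zl)\cong H^2(G(\Fp),\Ql/\Zl)=0$ for the relevant (tame) primes; the finite-group-of-Lie-type classes are precisely the torsion classes of \autoref{reform2}, and they do \emph{not} virtually lift to characteristic zero. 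The entire non-archimedean contribution comes from the single prime $l$, where by Lazard's theorem $H^3_\cts(K_l,\Ql)\cong H^3(\gg\otimes\Ql,\Ql)\cong\Ql^{b_\R+2b_\C}$ (one dimension per simple factor of $\gg\otimes\C$). Consequently your claimed splitting of $A_{\mathrm f}$ ``according to the simple factors of $G\times\R$'' and your attribution of the ambiguity $b_\C$ to uncertainty about finite groups of Lie type are both misplaced: in the actual argument the ambiguity is the unknown rank of the restriction map $H^3_\cts(G(\Ql),\Ql)\to H^3(G(\Q),\Ql)$, whose target is the \emph{stable} $H^3$ of arithmetic groups, computed as $H^3(\gg,\gk,\C)\cong\C^{b_\C}$ via the compact dual symmetric space. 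That computation (together with $\dim H^2(\gg,\gk)=m$) is the input you are missing, and without it neither your lower nor your upper bound is established.

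A second, structural, gap: your short exact sequence $0\to A_{\mathrm f}\to\bar H^2(\Zl)^{\hGQ}\to A_\infty\to 0$ is only asserted. The sequence that can actually be proved (from the Hochschild--Serre spectral sequence for $\GammaL\times L$ acting on $\scC(L,\Zl)$, after a careful Mittag--Leffler analysis of the projective limits over $t$ and over $S$) is
\[
0 \to H^2(G(\Q),\Ql) \to \bar H^2(\Ql)^{\hGQ} \to H^3(\gg\otimes\Ql,\Ql) \to H^3(G(\Q),\Ql),
\]
in which the \emph{archimedean} piece $H^2(G(\Q),\Ql)\cong\Ql^m$ is the sub and the $l$-adic piece sits in the quotient --- the opposite of your arrangement. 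Your version would require showing that the Deligne classes and the finite-place classes are independent, which you do not do. Finally, even granting the dimension count over $\Ql$, concluding $\bar H^2(\Zl)^{\hGQ}\cong\Zl^c$ needs the facts that $\bar H^2(\Zl)$ is torsion-free and has no nonzero divisible elements (which follow from $\bar H^1(\Zlt)=0$ and the coefficient sequences $0\to\Zl\xrightarrow{\times l^t}\Zl\to\Zlt\to 0$); asserting that the two pieces of an unproved extension are free does not substitute for this.
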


For comparison, we note that the construction of Deligne implies the bound $c \ge m$;
this is because $\pi_1(G(\R))$ has a finite index subgroup isomorphic to $\Z^m$.

As an easy consequence, of the theorem, we obtain the following:

\begin{corollary}
	Let $G/\Q$ be simple and simply connected
	with finite congruence kernel.
	There is a subgroup of $\bar H^2(\Zlt)^\hGQ$ isomorphic to $(\Zlt)^{c}$,
	all of whose elements virtually lift to characteristic zero,
	where $c$ is the positive integer in \autoref{reform3}.
\end{corollary}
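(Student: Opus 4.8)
The plan is to deduce this corollary from \autoref{reform3} by a purely cohomological limit argument. First I would record the fact that $\bar H^2(\Zlt)$ carries a continuous (hence smooth) action of $\hGQ$, and that the reduction maps $\Z/l^{t+1}\to\Z/l^t$ induce $\hGQ$-equivariant maps on cohomology, so that $\bar H^2(\Zl)=\limp{t}\bar H^2(\Zlt)$ is an inverse limit of $\hGQ$-modules and the formation of $\hGQ$-invariants is left exact; in particular $\bar H^2(\Zl)^\hGQ=\limp{t}\left(\bar H^2(\Zlt)^\hGQ\right)$ is the inverse limit of the invariant submodules. By \autoref{reform3} the left-hand side is a free $\Zl$-module of rank $c$.

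Next I would produce the required copy of $(\Zlt)^c$ inside $\bar H^2(\Zlt)^\hGQ$. Choosing a $\Zl$-basis $e_1,\dots,e_c$ of $\bar H^2(\Zl)^\hGQ\cong\Zl^c$, each $e_j$ is by definition a compatible system $(e_{j,t})_t$ with $e_{j,t}\in\bar H^2(\Zlt)^\hGQ$. I would let $M_t\subset\bar H^2(\Zlt)^\hGQ$ be the subgroup generated by $e_{1,t},\dots,e_{c,t}$. The point is that because $\bar H^2(\Zl)^\hGQ$ is $\Zl$-free of rank $c$ and the transition maps are surjective onto these submodules by construction, $M_t$ is a quotient of $(\Zlt)^c$; to see it is \emph{all} of $(\Zlt)^c$, i.e. that the $e_{j,t}$ are $\Z/l^t$-independent, I would argue that any nontrivial relation among them, lifted through the tower, would contradict the $\Zl$-freeness and rank of the limit (equivalently: reduce the basis of $\Zl^c$ mod $l^t$). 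Hence $M_t\cong(\Zlt)^c$ sits inside $\bar H^2(\Zlt)^\hGQ$.

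Finally I would observe that every element of $M_t$ virtually lifts to characteristic zero essentially by construction: an element $x\in M_t$ is the mod-$l^t$ reduction of the element $\tilde x=\sum a_j e_j\in\bar H^2(\Zl)$ (with $a_j\in\Zl$ lifting the coefficients of $x$), and $\tilde x$, being an element of $\limp{t}\bar H^2(\Zlt)$, is represented by a compatible family of central extensions of arithmetic subgroups $\Upsilon_{t'}$ by $\Z/l^{t'}$ for all $t'$, fitting into exactly the commutative diagrams defining virtual lifting to characteristic zero. This is precisely the statement that $x$ lies in the image of $\bar H^2(\Zl)\to\bar H^2(\Zlt)$, which the excerpt already identifies with virtual liftability. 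So $M_t\cong(\Zlt)^c$ is the desired subgroup.

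The main obstacle is the middle step: one must be careful that the ``obvious'' surjection $(\Zlt)^c\twoheadrightarrow M_t$ is actually an isomorphism, i.e. that reducing a $\Zl$-basis of the rank-$c$ limit mod $l^t$ stays independent over $\Z/l^t$. This is where the freeness (no $\Zl$-torsion) in \autoref{reform3} is essential; without it the reductions could collapse. Everything else — equivariance of reduction maps, left-exactness of invariants under inverse limits, and the identification of the image of $\bar H^2(\Zl)$ with the virtually-liftable classes — is formal and already set up in the preceding sections.
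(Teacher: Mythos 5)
Your proof is correct and follows essentially the same route as the paper: both identify the desired subgroup as the image of $\bar H^2(\Zl)^\hGQ\cong\Zl^c$ under reduction mod $l^t$, and both rely on the vanishing of $\bar H^1(\Zlt)$ (equivalently, the torsion-freeness of $\bar H^2(\Zl)$) to see that this image is free of rank $c$ over $\Zlt$ and consists, by definition, of classes that virtually lift. The paper packages your basis-reduction step as taking $\hGQ$-invariants of the exact sequence $0\to\bar H^2(\Zl)\xrightarrow{\times l^t}\bar H^2(\Zl)\to\bar H^2(\Zlt)$, but the content is the same.
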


\autoref{reform3} and its corollary will be proved in section 6.
The proof requires a result on the cohomology of
compact symmetric spaces, which is proved in
the appendix.

\begin{remark}
	We stress that \autoref{reform3} implies $\bar H^2(\Zl)^{\hGQ}$ is non-zero
	even in cases where $H^2(\Gamma,\C)=0$ for all arithmetic subgroups $\Gamma$ of $G(\Q)$.
	This happens when $G$ has large real rank and the symmetric space
	associated to $G$ has no complex structure, for example when $G=\SL_5/\Q$.
	The extensions constructed by the method of Deligne exist
	only in the case $m>0$; our result shows that
	$\bar H^2(\Zl)^\hGQ$ is non-zero even in cases where $m=0$.
\end{remark}

\begin{remark}
	The author believes that $\rank_\Zl  \left( \bar H^{2}(\Zl)^{\hGQ}\right)=b_\R+b_\C+m$.
	Proving this would amount to showing that the restriction map
	$H^3_\cts(G(\Ql),\Ql) \to H^3(G(\Q),\Ql)$ is surjective.
\end{remark}

As long as $G(\R)$ has no simple factors of complex type, \autoref{reform3} tells us precisely the rank of $\bar H^2(\Zl)^\hGQ$.
Some examples are given in Table 1.
In this table, $\Spin(r,s)$ denotes the Spin group of an arbitrary quadratic forms
over $\Q$ of signature $(r,s)$.
The congruence subgroup property for such groups was
established by Kneser \cite{kneser-spin}.

\begin{table}
\caption{}
\begin{center}
\begin{tabular}{|l l|c|c|c|}
	\hline
	$G$  & & $m $ & $b_\R$ & $c = \rank_{\Zl}\left( \bar H^{2}(\Zl)^{\hGQ}\right)$ \\
	\hline
	$\SL_{n}/\Q$ &($n \ge 3$) & 0 & 1 & 1\\[1mm]
	$\Sp_{2n}/\Q$ &($n \ge 2$) & 1 & 1 & 2\\[1mm]
	$\Spin(r,s)$ &($r \ge s \ge 3$) &0 &1& 1\\[1mm]
	$\Spin(r,2)$ &($r \ge 3$) &1&1& 2\\[1mm]
	$\Spin(2,2)$ & &2&2& 4\\[1mm]
	$\Rest^{k}_{\Q}(\SL_{n}/k)$ & ($n \ge 3$, $k$ totally real) &0& $[k:\Q]$ & $[k:\Q]$ \\[1mm]
	$\Rest^{k}_{\Q}(\SL_{2}/k)$ & ($k$ totally real, $k \ne \Q$) &$[k:\Q]$&$[k:\Q]$& $2[k:\Q]$ \\[1mm]
	$\Rest^{k}_{\Q}(\Sp_{2n}/k)$ & ($k$ totally real) &$[k:\Q]$&$[k:\Q]$& $2[k:\Q]$ \\[1mm]
	\hline
\end{tabular}
\end{center}
\label{default}
\end{table}%

The case $\SL_{2}/\Q$ and its forms of rank $0$ are not included in the table.
This is because these groups have infinite congruence kernel, and indeed for these groups
we have $\bar H^{2}(\Zn)=0$ and $\bar H^2(\Zl)=0$.

\section{Background material}

\subsection{Continuous cohomology}

We shall make use of the continuous cohomology groups $H^\bullet_\cts(G,A)$, where $G$ is a topological group and $A$ is an abelian topological group, which is a $G$-module via a continuous action $G \times A \to A$.

In all cases under consideration here, the group $G$ will be metrizable, locally compact, totally disconnected, separable and $\sigma$-compact.
The coefficient group $A$ will always be Polonais.
Under these restriction, the continuous cohomology groups defined in \cite{casselman-wigner} (based on continuous cocycles) are the same as those defined in \cite{moore I}, \cite{moore II}, \cite{moore III} based on Borel measurable cocycles.
This is proved in Theorem 1 of \cite{wigner}.

If $A$ is a continuous $H$-module for some closed subgroup $H$ of $G$, then we shall write $\ind_H^G(A)$ for the induced module,
consisting of all continuous functions $f:G \to A$ satisfying $f(hg)=h\cdot f(g)$ for all $g\in G$ and $h\in H$.
This agrees with the notation of \cite{casselman-wigner} but not \cite{moore I}, \cite{moore II}, \cite{moore III}.
The following version of Shapiro's lemma holds for these induced representations.

\begin{theorem}[Shapiro's Lemma]
	\label{shapiro}
	Let $H$ be a closed subgroup of $G$, where $G$ satisfies the conditions above.
	For any continuous $H$-module $A$,
	 there is a canonical isomorphism of topological groups:
	\[
		H^{\bullet}_{\cts}(G,\ind_{H}^{G}A)
		=
		H^{\bullet}_{\cts}(H,A).
	\]
\end{theorem}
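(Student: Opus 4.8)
The plan is to deduce the isomorphism from the behaviour of $\ind_H^G$ on relatively injective resolutions, in the manner of \cite{casselman-wigner}. The one genuinely algebraic ingredient is Frobenius reciprocity: for any continuous $G$-module $M$, evaluation at the identity gives a canonical homeomorphism $(\ind_H^G M)^G \cong M^H$ — a $G$-invariant element of $\ind_H^G M$ is a constant function $g\mapsto v$, and the defining relation $f(hg)=h\cdot f(g)$ forces $v\in M^H$ — and, more generally, $\ind_H^G$ is right adjoint to restriction, $\Hom_G(N,\ind_H^G M) = \Hom_H(\Rest N,M)$, naturally and topologically.

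First I would fix a strong resolution $A\to I^\bullet$ of the continuous $H$-module $A$ by relatively injective $H$-modules (for instance the homogeneous bar resolution $I^n = C_\cts(H^{n+1},A)$ with the diagonal action), so that $H^\bullet_\cts(H,A)=H^\bullet((I^\bullet)^H)$ by resolution-independence \cite{casselman-wigner}. Applying $\ind_H^G$ termwise, the claim is that $\ind_H^G A\to \ind_H^G I^\bullet$ is again a strong resolution, now by relatively injective $G$-modules. Granting this, Frobenius reciprocity yields
\[
	H^\bullet_\cts(G,\ind_H^G A)=H^\bullet((\ind_H^G I^\bullet)^G)=H^\bullet((I^\bullet)^H)=H^\bullet_\cts(H,A),
\]
and the usual comparison theorem for relatively injective resolutions makes the composite canonical and independent of $I^\bullet$; it is a morphism of topological groups, hence an isomorphism of such, because every arrow in the argument is induced by a continuous map of underlying spaces. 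That $\ind_H^G$ carries relatively injective $H$-modules to relatively injective $G$-modules is formal from the adjunction $\Hom_G(-,\ind_H^G I)=\Hom_H(\Rest(-),I)$ together with the observation that $\Rest$ preserves strongly exact sequences (a continuous $G$-equivariant splitting is in particular $H$-equivariant).

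What remains — and this is the only real point — is that $\ind_H^G$ is an exact functor which preserves the strongness of the augmented complex. A strongly exact sequence of $H$-modules carries, by definition, a continuous but not necessarily equivariant linear splitting, and to transport such a splitting through $\ind_H^G$ one needs the projection $G\to H\backslash G$ to admit enough continuous cross-sections, so that $\ind_H^G M$ is locally a space of continuous $M$-valued functions on $H\backslash G$. When $H$ is open in $G$ the quotient $H\backslash G$ is discrete and a continuous global section is automatic, so there is nothing to check; this already covers many of the applications below. For general closed $H$ the clean way to proceed is to pass to Moore's Borel-cochain description of the cohomology (\cite{moore I}, \cite{moore II}, \cite{moore III}): then $H\backslash G$ is a standard Borel space, the projection $G\to H\backslash G$ admits a global Borel section by Mackey's cross-section theorem, and the entire argument above runs verbatim with ``continuous'' replaced by ``Borel'' throughout (including the manipulation of the linear contracting homotopies); Theorem~1 of \cite{wigner}, cited above, then identifies the outcome with the continuous cohomology groups. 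Equivalently, one can write the isomorphism out on homogeneous cochains — evaluation at the identity identifies $C^\bullet_\cts(G,\ind_H^G A)$ with the complex of continuous $H$-equivariant maps $G^{\bullet+1}\to A$, restriction to $H^{\bullet+1}$ carries this into $C^\bullet_\cts(H,A)$, and a Borel section of $G\to H\backslash G$ supplies a homotopy inverse, the two composites differing from the identity by an explicit, section-independent cochain homotopy. In either presentation the existence and bookkeeping of Borel sections of $G\to H\backslash G$ is the crux; the rest is formal or is furnished by the cited comparison theorems.
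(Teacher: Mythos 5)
Your argument is correct and is essentially the paper's own proof written out in full: the paper simply cites Propositions 3 and 4 of Casselman--Wigner (and the remark following Proposition 4), which is exactly the Frobenius-reciprocity-plus-relatively-injective-resolutions argument you give, with the existence of (Borel or continuous) cross-sections of $G\to H\backslash G$ as the one non-formal input. The only quibble is your parenthetical justifying that restriction preserves strong exactness --- the splitting in a strongly exact sequence is merely continuous, not $G$-equivariant, so the claim holds for the even more trivial reason that a continuous splitting remains a continuous splitting.
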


\begin{proof}
	This follows Propositions 3 and 4 of \cite{casselman-wigner} in view of the remark following Proposition 4.
\end{proof}

We shall also make frequent use of the following.

\begin{theorem}[The Hochschild--Serre spectral sequence]
	Let $H$ be a closed normal subgroup of a group $G$, where $G$ satisfies the conditions above,
	and let $A$ be a continuous Polonais representation of $G$.
	If the groups $H^\bullet(H,A)$ are all Hausdorff,
	then there is a first quadrant spectral sequence converging to $H^\bullet(G,A)$, with $E_2$ sheet given by
	\[
		E_{2}^{r,s}
		=
		H^{r}_{\cts}(G,H^{s}_{\cts}(H,A)).
	\]
\end{theorem}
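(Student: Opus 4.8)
The plan is to realize the spectral sequence as the Grothendieck (composite-functor) spectral sequence attached to the factorization of the invariants functor $A\mapsto A^{G}$ as $A\mapsto A^{H}\mapsto (A^{H})^{G/H}$, working with continuous cohomology as a derived functor --- via Wigner's theorem (Theorem~1 of \cite{wigner}) one may equally use Moore's measurable cohomology, and I would follow the framework of \cite{moore III}, where composite-functor spectral sequences of exactly this type are set up for Polonais modules.

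The essential input is an acyclicity statement. In Moore's (or Casselman--Wigner's) theory the role of injectives is played by the modules $\mathscr{U}(G^{n+1},A)$ of measurable functions $G^{n+1}\to A$ with the homogeneous action, which are acyclic for $H^\bullet_\cts(G,-)$ by Shapiro's Lemma and the vanishing of the cohomology of the trivial group; the bar complex built from them resolves $A$. One checks that, restricted to the closed normal subgroup $H$, such a module is again of ``induced from the trivial subgroup'' type --- after a change of variables $G^{n+1}\cong G\times G^{n}$ absorbing the coefficient action, it becomes $\mathscr{U}(H,\,\cdot\,)$ for a trivial $H$-module --- hence is $H$-acyclic; and that its $H$-invariants, computed using normality of $H$ and a measurable section of $G\to G/H$, form a module of the same induced type for $G/H$, hence $H^\bullet_\cts(G/H,-)$-acyclic. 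This says precisely that $(-)^{H}$ carries the resolving class for $G$ into the resolving class for $G/H$, which is what the composite-functor machine needs; it then produces a first-quadrant spectral sequence with $E_2^{r,s}=\bigl(R^{r}(-)^{G/H}\bigr)\bigl(R^{s}(-)^{H}A\bigr)=H^{r}_\cts(G/H,H^{s}_\cts(H,A))$ abutting to $R^{r+s}(-)^{G}A=H^{r+s}_\cts(G,A)$, with the usual inflation and restriction edge maps. Here one uses that $G/H$ inherits from $G$ (with $H$ closed) the standing properties of being metrizable, locally compact, totally disconnected, separable and $\sigma$-compact, so the inner cohomology is the intended one.

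The hypothesis that the groups $H^\bullet_\cts(H,A)$ are Hausdorff enters precisely at the point where one needs $H^{s}_\cts(H,A)$ to be a Polonais $G/H$-module with continuous action, so that the $E_2$-term $H^{r}_\cts(G/H,H^{s}_\cts(H,A))$ is defined inside the theory: Hausdorffness guarantees that the space of $s$-cocycles modulo coboundaries is a genuine Polonais quotient (rather than a non-Hausdorff topological group), and continuity of the residual $G/H$-action then follows from continuity of the $G$-action at the cochain level. Equivalently, and more concretely, one may avoid the derived-functor language and run the classical Hochschild--Serre double-complex argument directly on measurable cochains, filtering in the two directions: one filtration collapses because the rows are $H$-acyclic, and the Hausdorffness assumption is again exactly what lets one identify the $E_2$-page of the other filtration with $H^{r}_\cts(G/H,H^{s}_\cts(H,A))$ rather than with an associated graded of it.

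The step I expect to be the genuine obstacle is the acyclicity bookkeeping above: the category of Polonais $G$-modules is only quasi-abelian, so one must be careful that ``derived functor'', ``resolution'' and ``Grothendieck spectral sequence'' are all interpreted relative to the resolving class $\{\mathscr{U}(G^{n+1},-)\}$, and verify that taking $H$-invariants really does commute with the construction and detects $H^{s}_\cts(H,A)$ on the nose. Most of this is already carried out in \cite{moore III}; the task here is to confirm that the stated hypotheses --- and the Hausdorffness assumption in particular --- are exactly what is required to invoke that theory and to match its output with the continuous cohomology used throughout this paper.
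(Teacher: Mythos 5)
Your proposal is correct and takes essentially the same route as the paper, whose entire proof is the citation of Theorem 9 of \cite{moore III}: the composite-functor spectral sequence for $(-)^G=((-)^H)^{G/H}$ in Moore's category of Polonais modules, with the resolving class of induced/regular-representation modules and with Hausdorffness of $H^\bullet_\cts(H,A)$ ensuring the $E_2$-term is defined, is precisely the content and proof of that theorem, to which you yourself defer for the details. (Incidentally, your $E_2^{r,s}=H^r_\cts(G/H,H^s_\cts(H,A))$ is the correct form; the $G$ in the paper's displayed $E_2$-term is a typo.)
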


\begin{proof}
	This follows from Theorem 9 of \cite{moore III} in all cases under consideration.
\end{proof}

\subsection{The derived functor of projective limit}

By a \emph{projective system}, we shall mean a sequence of abelian groups $A_{t}$, indexed by $t\in\N$, and connected by group homomorphisms as follows:
\[
	A_{1} \leftarrow
	A_{2} \leftarrow
	A_{3} \leftarrow \cdots.
\]
We shall write $\limp{t}A_t$
for the projective limit of the system.
The functor $\limp{t}$ is left-exact from the category of projective systems of abelian groups to the category of abelian groups.
As such, it has derived functors $\left(\limp{t} \right)^{\bullet}A_{t}$.
It is known (Corollary 3.5.4 of \cite{weibel})
that the higher derived functors $\left(\limp{t} \right)^{n}A_{t}$ for $n\ge 2$ are all zero.

The projective system $(A_{t})$ is said to satisfy the
\emph{Mittag--Leffler property} if for every $t\in\N$, there is a $j\in\N$
with the property that for all $k>j$ the image of $A_{k}$ in
$A_{t}$ is equal to the image of $A_{j}$ in $A_{t}$.
For example, if the Abelian groups $A_{t}$ are all finite then the projective system has the Mittag--Leffler property.
Similarly, if the groups $A_t$ are all finite dimensional vector spaces connected by linear maps, then the projective system satisfies the Mittag--Leffler condition.

\begin{proposition}[Proposition 3.5.7 of \cite{weibel}]
	If the projective system $(A_{t})$ satisfies the Mittag--Leffler condition then
	\(
		\limpd{t} A_{t}
		=
		0
	\).
\end{proposition}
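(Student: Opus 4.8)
The plan is to prove the contrapositive-friendly statement directly: if $(A_t)$ is Mittag--Leffler, then $\limprojsd{t} A_t = 0$. Recall that $\limprojs{t}$ and $\limprojsd{t}$ are the cohomology of the complex
\[
	0 \to \prod_{t} A_t \xrightarrow{\ d\ } \prod_{t} A_t \to 0,
	\qquad
	d\big((a_t)_t\big) = \big(a_t - \varphi_{t+1}(a_{t+1})\big)_t,
\]
where $\varphi_{t+1}\colon A_{t+1} \to A_t$ are the transition maps. Thus $\limprojs{t} A_t = \ker d$ and $\limprojsd{t} A_t = \operatorname{coker} d$, and the claim amounts to showing that $d$ is surjective when the system is Mittag--Leffler. (One should first justify that this two-term complex computes the derived functors; this is standard, e.g. via the fact that the product functor on projective systems, equipped with the shift, gives an injective resolution of length one — this is exactly the content of Corollary 3.5.4 of \cite{weibel} quoted in the excerpt, so I would just cite it.)

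First I would reduce to a system with \emph{surjective} transition maps. Define $A_t' = \bigcap_{k \ge t} \operatorname{im}\big(A_k \to A_t\big)$. By the Mittag--Leffler hypothesis this intersection stabilizes, so for each $t$ there is $j \ge t$ with $A_t' = \operatorname{im}(A_j \to A_t)$; moreover the transition maps of $(A_t)$ restrict to maps $A_{t+1}' \to A_t'$, and a short diagram chase shows these restricted maps are \emph{surjective}. The key point is then that the inclusion of projective systems $(A_t') \hookrightarrow (A_t)$ induces an isomorphism on $\limprojs{t}$ and a surjection (in fact one checks the relevant long exact sequence) relating the $\limprojsd{t}$ terms — concretely, the quotient system $(A_t/A_t')$ has the property that for each $t$ the image of $A_k/A_k'$ in $A_t/A_t'$ is eventually zero, so both its $\limprojs{t}$ and its $\limprojsd{t}$ vanish, and the six-term exact sequence of $\limprojs{t}$ associated to $0 \to (A_t') \to (A_t) \to (A_t/A_t') \to 0$ forces $\limprojsd{t} A_t \cong \limprojsd{t} A_t'$.

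It therefore suffices to treat a system with all transition maps surjective and show $d$ is surjective there. Given $(b_t)_t \in \prod_t A_t$, I would construct a preimage $(a_t)_t$ recursively: choose $a_1$ arbitrarily (say $0$), and given $a_t$, use surjectivity of $\varphi_{t+1}$ to pick $a_{t+1} \in A_{t+1}$ with $\varphi_{t+1}(a_{t+1}) = a_t - b_t$; then by construction $a_t - \varphi_{t+1}(a_{t+1}) = b_t$ for all $t$, i.e. $d\big((a_t)_t\big) = (b_t)_t$. Hence $d$ is onto and $\limprojsd{t} A_t' = 0$, so $\limprojsd{t} A_t = 0$ as well.

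The only genuinely delicate point is the bookkeeping in the reduction step — verifying that the restricted transition maps $A_{t+1}' \to A_t'$ are surjective and that the quotient system contributes nothing to either derived functor. This is a routine but slightly fiddly diagram chase using that the Mittag--Leffler stabilization index can be chosen compatibly; everything else (the recursive lifting, the identification of the complex) is formal. Since the statement is quoted from \cite{weibel} I expect the paper's own proof to be a one-line citation, but the argument above is the standard one.
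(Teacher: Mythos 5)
Your proof is correct and is essentially the standard argument (indeed, it is Weibel's own proof of Proposition 3.5.7: reduce to the subtower of stable images, which has surjective transition maps, dispose of the eventually-zero quotient tower, and solve the recursion). The paper itself offers no proof here --- it simply cites Weibel --- so there is nothing further to compare.
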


\begin{theorem}[Theorem 3.5.8 of \cite{weibel}]
	\label{limses}
	Let $\cdots \to C^{r}_{2}\to C^{r}_{1}$ be a projective system of cochain complexes of abelian groups, each indexed by $r \ge 0$.
	Assume that this projective system has the Mittag--Leffler property,
	and let $C^{r}=\limp{t} C^r_t$ be the projective limit of the complexes.
	Then we have
	\(
		H^{0}(C^{\bullet})
		=
		\limp{t}
		H^{0}(C^{\bullet}_{t})
	\).
	Furthermore, for every $r\ge 0$ there is a short exact sequence
	\[
		0 \to
		\limpd{i} 
		H^{r}(C^{\bullet}_{i})
		\to
		H^{r+1}(C^{\bullet})
		\to
		\limp{i}
		H^{r+1}(C^{\bullet}_{i})
		\to
		0.
	\]
\end{theorem}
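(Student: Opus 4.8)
The plan is to run the standard comparison between the cohomology of a projective limit of cochain complexes and the projective limit of the cohomologies, the discrepancy being the functor $\varprojlim^1$. Two facts do all the work. First, $\varprojlim$ is a left exact functor on towers whose derived functors vanish above degree one (Corollary 3.5.4 of \cite{weibel}), so every short exact sequence of towers $0 \to A_t \to B_t \to C_t \to 0$ produces a six-term exact sequence
\[
	0 \to \varprojlim_t A_t \to \varprojlim_t B_t \to \varprojlim_t C_t \to \varprojlim^1_t A_t \to \varprojlim^1_t B_t \to \varprojlim^1_t C_t \to 0 .
\]
Second, the Mittag--Leffler hypothesis, which will be used precisely to annihilate the relevant $\varprojlim^1$ terms.

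For each $r \ge 0$ and each $t$ put $Z^r_t = \ker(d\colon C^r_t \to C^{r+1}_t)$, $B^r_t = \im(d\colon C^{r-1}_t \to C^r_t)$ and $H^r_t = H^r(C^\bullet_t)$, and write $Z^r, B^r$ for the cocycles and coboundaries of the limit complex $C^\bullet$. Since $\varprojlim$ is left exact, applying it to $0 \to Z^r_t \to C^r_t \to C^{r+1}_t$ identifies $Z^r$ with $\varprojlim_t Z^r_t$. First I would record two vanishing statements. One is $\varprojlim^1_t C^r_t = 0$ for all $r$, which is precisely the hypothesis. The other is $\varprojlim^1_t B^r_t = 0$ for all $r$: the tower $(B^r_t)_t$ is a quotient tower of the Mittag--Leffler tower $(C^{r-1}_t)_t$, and a quotient of a Mittag--Leffler tower is again Mittag--Leffler, because the image of $B^r_k \to B^r_t$ is the image in $B^r_t$ of the eventually constant image of $C^{r-1}_k \to C^{r-1}_t$.

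Next I would feed the two tautological short exact sequences of towers into the six-term sequence. From $0 \to B^r_t \to Z^r_t \to H^r_t \to 0$, using $\varprojlim^1_t B^r_t = 0$, one gets the exact sequence
\[
	0 \to \varprojlim_t B^r_t \to Z^r \to \varprojlim_t H^r_t \to 0
\]
together with an isomorphism $\varprojlim^1_t Z^r_t \cong \varprojlim^1_t H^r_t$. From $0 \to Z^{r-1}_t \to C^{r-1}_t \to B^r_t \to 0$, using $\varprojlim^1_t C^{r-1}_t = 0$ and the fact that the map $C^{r-1} \to C^r$ has image $B^r$ and factors through the inclusion $\varprojlim_t B^r_t \hookrightarrow C^r$, one gets
\[
	0 \to B^r \to \varprojlim_t B^r_t \to \varprojlim^1_t Z^{r-1}_t \to 0 ;
\]
substituting the isomorphism above (with $r$ replaced by $r-1$) rewrites the last term as $\varprojlim^1_t H^{r-1}_t$.

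Finally I would assemble these along the chain of subgroups $B^r \subseteq \varprojlim_t B^r_t \subseteq Z^r$ inside $C^r$: in the quotient $Z^r/B^r = H^r(C^\bullet)$, the image of $\varprojlim_t B^r_t$ is a subgroup isomorphic to $\varprojlim^1_t H^{r-1}_t$, with quotient $Z^r/\varprojlim_t B^r_t \cong \varprojlim_t H^r_t$. This is the short exact sequence asserted in the theorem, with $r+1$ in place of $r$. For $r=0$ the complexes vanish in degree $-1$, so $B^0 = 0$ and $H^0(C^\bullet) = Z^0 = \varprojlim_t Z^0_t = \varprojlim_t H^0_t$, which is the first claim.

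The whole argument is formal once the Mittag--Leffler condition is available, so I do not expect a serious obstacle; the care needed is in the bookkeeping of the last two paragraphs. The conceptual point worth flagging is that the coboundaries $B^r$ of the limit complex are in general a \emph{proper} subgroup of $\varprojlim_t B^r_t$, and precisely this gap is the $\varprojlim^1$ correction term; one should also resist the tempting but false guess that the cocycle towers $(Z^r_t)_t$ are Mittag--Leffler, since they need not be, and nothing in the proof requires it.
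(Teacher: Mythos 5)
Your argument is correct and complete: the paper itself gives no proof of this statement, simply citing Theorem 3.5.8 of Weibel, and your proof is essentially the standard one found there (the two tautological towers $0\to B^r_t\to Z^r_t\to H^r_t\to 0$ and $0\to Z^{r-1}_t\to C^{r-1}_t\to B^r_t\to 0$, the six-term $\varprojlim/\varprojlim^1$ sequence, and the observation that quotient towers of Mittag--Leffler towers are Mittag--Leffler). The two remarks at the end --- that $B^r\subsetneq\varprojlim_t B^r_t$ is exactly the $\varprojlim^1$ correction, and that the cocycle towers need not be Mittag--Leffler --- are accurate and well placed.
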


As a simple example, we show how to express the cohomology of $G(\Q)$ in terms of
the cohomology of its $S$-arithmetic subgroups.
As before, we let $G/\Q$ be a simple, simply connected algebraic group, and $K_f=\prod_p K_p$ a compact open subgroup of $G(\Af)$.
We shall write $\Gamma$ for the arithmetic group $G(\Q) \cap K_f$.
More generally, if $S$ is a finite set of prime numbers, then we use the notation
$\Gamma^S$ for the corresponding $S$-arithmetic group, i.e.
\[
	\Gamma^S = G(\Q) \cap K^S,
	\qquad
	K^S = \left(\prod_{p\in S} G(\Qp) \right) \times K_f.
\]

\begin{proposition}
	\label{proposition projective S-arithmetic}
	For any field $\F$, we have
	\(
		H^\bullet(G(\Q), \F)
		=
		\limp{S} H^\bullet(\Gamma^S,\F).
	\)
	In the case $\F=\C$ we have
	\(
		H^\bullet(G(\Q),\C)
		=
		H^\bullet(\gg,\gk,\C),
	\)
	where $H^\bullet(\gg,\gk,\C)$ are the relative
	Lie algebra cohomology groups
	studied in \cite{borel-wallach}.
\end{proposition}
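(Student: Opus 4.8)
The plan is to prove the two assertions separately, starting with the identification $H^\bullet(G(\Q),\F) = \limp{S} H^\bullet(\Gamma^S,\F)$. The key input is strong approximation: since $G$ is simply connected and has positive real rank, $G(\Q)$ is dense in $G(\Af)$, and more precisely $G(\Q)/\Gamma^S$ can be identified with $G(\Af)/K^S$ as a $G(\Q)$-set. Thus $G(\Q) = \bigcup_S \Gamma^S$ with the $\Gamma^S$ directed downward under inclusion as $S$ grows (for $S \subseteq S'$ we have $\Gamma^S \supseteq \Gamma^{S'}$), and $\bigcap_S \Gamma^S = \Gamma^\emptyset = \Gamma$ is \emph{not} the point — rather the union of the directed \emph{system} $\{\Gamma^S\}$ is all of $G(\Q)$, because any finitely generated subgroup of $G(\Q)$ lands in some $K^S$ for $S$ large enough (one only needs to invert finitely many primes to clear denominators). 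So $G(\Q) = \limd{S}\Gamma^S$ as an abstract group, and I would apply the standard fact that group cohomology sends filtered colimits of groups to filtered limits of cohomology when the coefficients are fixed; since the $\Gamma^S$ form a cofiltered system of \emph{subgroups} this is just $H^\bullet(\bigcup_S \Gamma^S, \F) = \limp{S} H^\bullet(\Gamma^S,\F)$, which one proves at the level of the bar complex: cochains on $\bigcup \Gamma^S$ with values in $\F$ are exactly compatible families of cochains on the $\Gamma^S$, and this identification is exact in each degree because a cochain is determined by its restrictions and the system is Mittag--Leffler at the chain level (it is even strict: the restriction maps on bar complexes are surjective). Concretely this is precisely the situation of \autoref{limses} applied to the projective system of bar cochain complexes $C^\bullet(\Gamma^S,\F)$; the $\limpd{}$ term vanishes because the restriction maps on cochains are surjective, giving $H^{r}(C^\bullet) = \limp{S} H^r(\Gamma^S,\F)$, and $C^\bullet = \limp{S}C^\bullet(\Gamma^S,\F) = C^\bullet(G(\Q),\F)$.

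For the second assertion, when $\F = \C$, I would invoke the theorem of Borel (comparison of continuous/relative Lie algebra cohomology with arithmetic group cohomology in a stable range, or rather its strong form here) together with the fact that $H^\bullet(\Gamma^S,\C)$ stabilises. The cleanest route: each $H^\bullet(\Gamma^S,\C)$ is computed, via Matsushima--Borel--Wallach type results, by $(\gg,\gk)$-cohomology with coefficients in the space of automorphic forms, and the restriction maps in the system $S \mapsto H^\bullet(\Gamma^S,\C)$ become isomorphisms onto the piece $H^\bullet(\gg,\gk;\C)$ coming from the trivial automorphic representation. Alternatively, and perhaps more self-containedly, one uses that $H^\bullet_\cts(G(\Af),\C) = \C$ concentrated in degree $0$ (as $G(\Af)$ is totally disconnected and the coefficients are a trivial $\Q$-vector space, its continuous cohomology is trivial), applies the Hochschild--Serre spectral sequence for $1 \to \Gamma \to G(\Q) \to G(\Q)/\Gamma \to 1$ repeatedly, or more directly appeals to the known computation that $H^\bullet(G(\Q),\C) \cong H^\bullet(X_u,\C)$ where $X_u$ is the compact dual symmetric space, which is exactly $H^\bullet(\gg,\gk;\C)$. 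I would cite \cite{borel-wallach} for the identification $H^\bullet(\gg,\gk;\C) \cong H^\bullet(X_u,\C)$ and for the stability statement, noting that the congruence subgroup property is what guarantees the limit over $S$ (equivalently over all finite-index subgroups) does not see anything beyond the trivial representation.

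The main obstacle I anticipate is the second assertion rather than the first: one must control the limit $\limp{S} H^\bullet(\Gamma^S,\C)$ and argue that \emph{only} the contribution of the trivial automorphic representation survives in the limit. This is where finiteness of the congruence kernel is essential — it ensures $G(\Q)/\Gamma$ and the arithmetic completion $\hGQ$ differ from $G(\Af)$ by a finite group, so that passing to the limit over all finite-index subgroups is the same as passing to the limit over $S$-arithmetic levels up to finite error, and the finite error contributes nothing rationally. I would handle this by first establishing $H^\bullet(G(\Q),\C) = H^\bullet_\cts(\hGQ, \C)$ via \autoref{limses} (the $\bar H$-type limit), then using the extension $1 \to \Cong(G) \to \hGQ \to G(\Af) \to 1$ with $\Cong(G)$ finite and the Hochschild--Serre spectral sequence together with the vanishing of $H^{>0}_\cts(G(\Af),\C)$ and of $H^{>0}$ of a finite group with $\C$-coefficients to collapse everything onto $H^0$ — wait, that would give the wrong answer, so in fact the correct statement must be that the relevant comparison is with $G(\R)$, not $G(\Af)$: the true content is $H^\bullet(G(\Q),\C) = H^\bullet(\gg,\gk;\C)$, obtained by comparing arithmetic cohomology in the stable/limit range with the cohomology of the compact dual, and this is a theorem I would attribute to Borel (stable cohomology of arithmetic groups) as recorded in \cite{borel-wallach}. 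The first assertion, by contrast, is a formal consequence of strong approximation plus \autoref{limses} and should go through routinely once the surjectivity of the cochain restriction maps is noted.
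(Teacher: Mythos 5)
Your first assertion follows the paper's route (restriction to the inhomogeneous cochain complexes and \autoref{limses}), but there is a genuine gap in how you dispose of the derived-limit term. Surjectivity of the restriction maps on cochains gives the Mittag--Leffler condition for the projective system of \emph{complexes}, which is exactly the hypothesis of \autoref{limses}; the conclusion of that theorem is then a short exact sequence whose left-hand term is $\limpd{S} H^{r}(\Gamma^S,\F)$ --- the derived limit of the \emph{cohomology groups}, not of the cochains. Its vanishing is a separate matter and does not follow from surjectivity at the cochain level, since the induced maps on cohomology need not be surjective. The paper kills this term by invoking the Borel--Serre compactification: each $H^r(\Gamma^S,\F)$ is a finite-dimensional vector space, so the system of cohomology groups itself satisfies Mittag--Leffler and its derived limit vanishes. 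You need this (nontrivial) finiteness input or an equivalent substitute.

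For the second assertion your proposal circles the right answer but never lands on the actual mechanism. The paper's proof is one line: by the theorem of Blasius--Franke--Gr\"unewald \cite{blasius franke gruenewald}, $H^r(\Gamma^S,\C)=H^r(\gg,\gk,\C)$ as soon as $S$ contains more than $r$ primes, so the projective system is eventually constant in each degree and the limit is $H^r(\gg,\gk,\C)$. Your fallback --- Borel's stable cohomology --- is not the right theorem here: stability in Borel's sense requires the rank of the group to grow, whereas in this proposition $G$ is fixed and only the set $S$ of inverted primes grows; the identification in \emph{all} degrees $r$ once $|S|>r$ is precisely the content of the $S$-arithmetic result of \cite{blasius franke gruenewald} and is not a formal consequence of strong approximation or of the congruence subgroup property. (Finiteness of the congruence kernel is in fact not used anywhere in this proposition.) Your Hochschild--Serre detour through $G(\Af)$ you already recognised as yielding the wrong answer, so it should simply be deleted.
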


\begin{proof}
	For each $r \ge 0$ we shall write $C^r(\Gamma^S,\F)$ for the usual (inhomogeneous)
	cochain complex, consisting of all functions
	$f: \left(\Gamma^S\right)^r \to \F$.
	Since $G(\Q)$ is the union of the groups $\Gamma^S$,
	it follows that
	\[
		C^r(G(\Q),\F) = \limp{S}  C^r(\Gamma^S,\F).
	\]
	The maps in this projective system are restrictions of functions, and they are
	obviously surjective.
	Therefore the projective system satisfies the Mittag--Leffler condition.
	As a consequence, we have short exact sequences
	\[
		0 \to
		\limpd{S} 
		H^{r}(\Gamma^S,\F)
		\to
		H^{r+1}(G(\Q),\F)
		\to
		\limp{S}
		H^{r+1}(\Gamma^S,\F)
		\to
		0.
	\]
	By the theory of the Borel--Serre compactification (see \cite{borel serre}), the cohomology groups $H^{r}(\Gamma^S,\F)$
	are finite dimensional vector spaces.
	Therefore the projective system $\left(H^{r}(\Gamma^S,\F)\right)_S$ satisfies the
	Mittag--Leffler condition, so we have $\limpd{S} H^{r}(\Gamma^S,\F)=0$.
	
	In the case $\F=\C$, the theorem of \cite{blasius franke gruenewald} implies that
	$H^r(\Gamma^S,\C)=H^r(\gg,\gk,\C)$ whenever
	$S$ contains more than $r$ primes.
	Hence the projective limit (over $S$)
	is in this case $H^r(\gg,\gk,\C)$.
\end{proof}

\subsection{The congruence kernel}

Let $G/\Q$ be a simple, simply connected
 group with real rank at least 1.
By Kneser's strong approximation theorem
(see \cite{kneser-strongapprox}) the group $G(\Q)$ is dense is $G(\Af)$, where $\Af$ is the ring of finite ad\`eles of $\Q$.
It follows that there is an isomorphism of topological groups:
\[
	G(\Af)
	=
	\limp{\text{congruence subgroups } \Gamma}
	G(\Q)/\Gamma,
\]
where $\Gamma$ runs over the congruence subgroups of $G(\Q)$.
Recall that an \emph{arithmetic subgroup} of $G$ is any subgroup of $G(\Q)$, which is
commensurable with a congruence subgroup.
The \emph{arithmetic completion} $\hGQ$ is defined to be the completion of $G(\Q)$ with respect to the arithmetic subgroups of $G$, i.e.
\[
	\hGQ
	=
	\limp{\text{arithmetic subgroups } \Gamma} G(\Q)/\Gamma.
\]
There is a canonical surjective homomorphism $\hGQ \to G(\Af)$.
The \emph{congruence kernel} $\Cong(G)$ is defined to be the kernel of this map, so we have
a short exact sequence:
\[
	1 \to \Cong(G) \to \hGQ \to G(\Af) \to 1.
\]
The congruence kernel is trivial if and only if every arithmetic subgroup of
$G$ is a congruence subgroup.
If $G(\R)$ is simply connected as an analytic group,
 then the congruence kernel is never trivial, but may still be finite.
It has been conjectured by Serre \cite{serre-cong},
 that the congruence kernel is finite if and only if
each simple factor of $G$ over $\Q$ has real rank at least 2.
In the case that $\Cong(G)$ is finite, it is known that $\Cong(G)$ is contained in the
centre of $\hGQ$, and is a cyclic group.

\section{Proof of \autoref{reform2}}

In this section, we assume that the group $G/\Q$ is a simple, simply connected algebraic group with positive real rank.
We shall also assume that the congruence kernel $\Cong(G)$ is finite.
Hence, conjecturally that the real rank of $G$ is at least $2$.

\subsection{The groups $\cCLZn$}
\label{sect:5.1}

Let $L$ be an open subgroup of the arithmetic completion $\hGQ$.
We shall write $\GammaL$ for the group
$G(\Q)\cap L$.
Since $G(\Q)$ is dense in $\hGQ$, it follows that $\GammaL$ is dense in $L$.
If $L$ is compact and open then $\GammaL$ is an arithmetic group and $L$ is its profinite completion.
If $L$ is an $S$-arithmetic level, then
$\GammaL$ is an $S$-arithmetic group.

We shall write $\cCLZn$ for the group of continuous functions $f:L \to \Zn$.
We regard $\cCLZn$ as a $\GammaL\times L$-module, in which (for the sake of argument) $\GammaL$ acts by
left-translation and $L$ acts by right-translation.
We regard $\GammaL$ as a discrete topological group, and $L$ as a topological group with the subspace topology from $\hGQ$.
We do not assume that elements of $\cCLZn$ are uniformly continuous, and so the action of $L$ is not smooth unless $L$ is compact. The action is continuous, where $\cCLZn$ is equipped with the compact--open topology.

We shall also use the following notation, which was introduced earlier:
\[
	\bar H^\bullet(\Zn)
	=
	\limd{\Upsilon}
	H^\bullet(\Upsilon,\Zn),
\]
where $\Upsilon$ ranges of the arithmetic subgroups.

\begin{proposition}
	\label{reform4}
	For each open subgroup $L$ of $\hGQ$, there is a canonical isomorphism
	 of $L$-modules:
	\[
		\bar H^{\bullet}(\Zn)
		=
		H^{\bullet}(\GammaL,\cCLZn).
	\]
	The cohomology groups $\bar H^r(\Zn)= H^r(\GammaL,\cCLZn)$ are discrete (and hence  Hausdorff).
\end{proposition}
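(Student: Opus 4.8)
The plan is to construct the isomorphism via Shapiro's Lemma together with a direct-limit computation, and then to deduce discreteness from the fact that each term in the direct system is a cohomology group of an arithmetic group with finite coefficients. First I would observe that for any arithmetic subgroup $\Upsilon \subset \GammaL$ of finite index there is a natural map $H^\bullet(\Upsilon,\Zn) \to H^\bullet(\GammaL, \ind_\Upsilon^{\GammaL}(\Zn))$ given by Shapiro's Lemma (\autoref{shapiro}), where the induced module consists of functions $\GammaL \to \Zn$ invariant under left-translation by $\Upsilon$. Passing to the direct limit over $\Upsilon$, the modules $\ind_\Upsilon^{\GammaL}(\Zn)$ form a direct system whose colimit is precisely the space of \emph{locally constant} functions $\GammaL \to \Zn$ — equivalently, continuous functions when $\GammaL$ is given the topology it inherits as a dense subgroup of $L$. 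Since $\GammaL$ is dense in $L$, restriction of functions gives a bijection between continuous functions $L \to \Zn$ and uniformly locally constant functions $\GammaL \to \Zn$; here one must check that the relevant functions arising from finite-index $\Upsilon$ do extend continuously to $L$, which follows because cosets of $\Upsilon$ are open in $L$ (they are cosets of the closure $\bar\Upsilon$, which is open since $\Upsilon$ has finite index). This identifies $\varinjlim_\Upsilon \ind_\Upsilon^{\GammaL}(\Zn)$ with $\cCLZn$ as a $\GammaL \times L$-module.

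Next I would commute the direct limit past cohomology. Since group cohomology $H^\bullet(\GammaL, -)$ commutes with filtered colimits of coefficient modules (the standard bar complex computes it, and colimits are exact and commute with the finite products appearing in each cochain group), we get
\[
	\bar H^\bullet(\Zn)
	=
	\varinjlim_\Upsilon H^\bullet(\Upsilon,\Zn)
	=
	\varinjlim_\Upsilon H^\bullet\!\big(\GammaL, \ind_\Upsilon^{\GammaL}(\Zn)\big)
	=
	H^\bullet\!\big(\GammaL, \varinjlim_\Upsilon \ind_\Upsilon^{\GammaL}(\Zn)\big)
	=
	H^\bullet(\GammaL,\cCLZn).
\]
One subtlety: on the left, $\Upsilon$ ranges over \emph{all} arithmetic subgroups of $G$, whereas on the right the induction is from finite-index subgroups of $\GammaL$; these give cofinal systems of the same direct limit because any arithmetic subgroup contains, and is contained in, finite-index subgroups of $\GammaL$, and the transition maps (restriction/corestriction composites) are compatible. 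I would also need to verify that the resulting isomorphism is $L$-equivariant, which comes down to checking that right-translation by $L$ on $\cCLZn$ matches the Hecke-type action induced on the direct limit of the $H^\bullet(\Upsilon,\Zn)$; this is a compatibility of the Shapiro isomorphism with the two commuting group actions, and is essentially formal from the construction of the Shapiro map at cochain level.

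Finally, discreteness: each $H^r(\Upsilon,\Zn)$ is a discrete (indeed finite, since $\Upsilon$ is an arithmetic group, so of type $FP_\infty$, and the coefficients are finite) abelian group, hence $\bar H^r(\Zn)$ is a filtered colimit of discrete groups and so carries the discrete topology; a discrete topological group is automatically Hausdorff. Alternatively one can see discreteness on the $H^\bullet(\GammaL,\cCLZn)$ side: $\cCLZn$ with the compact–open topology has the property that each cochain group $C^r(\GammaL,\cCLZn)$ — functions from the discrete set $(\GammaL)^r$ — has no topology obstruction, and the cohomology comes out discrete because $\cCLZn = \varinjlim$ of discrete submodules. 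The main obstacle I anticipate is not any single deep input but the bookkeeping in the second step: making the identification of the colimit of induced modules with $\cCLZn$ precise (the density/continuity argument and the cofinality of the two indexing systems), and checking $L$-equivariance carefully enough to call the isomorphism "canonical." Everything else is a formal consequence of Shapiro's Lemma and exactness of filtered colimits.
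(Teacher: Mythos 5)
Your argument coincides with the paper's proof in the case where $L$ is compact and open, and in that case it is correct: the colimit identification $\varinjlim_{\Upsilon}\ind_{\Upsilon}^{\GammaL}(\Zn)\cong\cCLZn$, the commutation of filtered colimits with cohomology, Shapiro's lemma, and the discreteness of a filtered colimit of finite groups are exactly the steps used there. However, there is a genuine gap for a general open subgroup $L$ of $\hGQ$, which is the case the proposition is actually invoked for later ($S$-arithmetic levels). If $L$ is not compact, then $\GammaL$ is an $S$-arithmetic group and two of your claims fail. First, the cofinality claim is false: an arithmetic subgroup $\Upsilon$ has \emph{infinite} index in $\GammaL$, so the system of arithmetic subgroups indexing $\bar H^{\bullet}(\Zn)$ and the system of finite-index subgroups of $\GammaL$ are not mutually cofinal. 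Second, and more seriously, the identification of $\varinjlim_{\Upsilon}\ind_{\Upsilon}^{\GammaL}(\Zn)$ with $\cCLZn$ breaks down: a continuous $\Zn$-valued function on a non-compact factor such as $G(\Qp)$, $p\in S$, is locally constant but need not be invariant under any single open subgroup, so $\cCLZn$ is strictly larger than the union of the induced modules (the paper itself warns that the $L$-action on $\cCLZn$ is not smooth unless $L$ is compact). For the same reason $\cCLZn$ is not discrete when $L$ is non-compact, so your alternative discreteness argument on the $H^{\bullet}(\GammaL,\cCLZn)$ side does not apply directly either.

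The paper closes this gap with a preliminary reduction that your proposal is missing: for a compact open subgroup $K\subseteq L$ one has an isomorphism of $\GammaL$-modules
\[
	\cCLZn \;\cong_{\GammaL}\; \ind_{\GammaK}^{\GammaL}\,\cCKZn ,
\]
induction from the infinite-index subgroup $\GammaK=G(\Q)\cap K$, using $L=\GammaL\cdot K$ (density of $\GammaL$ in $L$). Shapiro's lemma then gives $H^{\bullet}(\GammaL,\cCLZn)\cong H^{\bullet}(\GammaK,\cCKZn)$, reducing everything to the compact case, after which your argument goes through verbatim and also yields the discreteness statement. You need to add this reduction step.
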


\begin{proof}
	As a first step, we'll show that the groups
	$H^\bullet(\GammaL,\cCLZn)$ do not depend on the level $L$.
	Let $K$ be an open subgroup of $L$.
	As a $\GammaL$-module, we have
	\[
		\cCLZn
		\cong_{\GammaL}
		\ind_{\GammaK}^{\GammaL} \cCKZn.
	\]
	By \nameref{shapiro}, we have an isomorphism of
	topological groups:
	\[
		H^{\bullet}(\GammaL,\cCLZn)
		=
		H^{\bullet}(\GammaK,\cCKZn).
	\]
	It's therefore sufficient to consider the case that
	the level $L$ is compact and open.
	Under this assumption, we have (as $\GammaL$-modules):
	\[
		\cCLZn
		=
		\limd{\Upsilon}
		\ind_{\Upsilon}^{\GammaL} \left( \Zn \right),
	\]
	where $\Upsilon$ ranges over the arithmetic subgroups of $\GammaL$.
	Since direct limits commute with cohomology, this implies
	\[
		H^{\bullet}(\GammaL,\cCLZn)
		=
		\limd{\Upsilon} H^{\bullet}\left(\GammaL, \ind_{\Upsilon}^{\GammaL} \Zn\right).
	\]
	Applying \nameref{shapiro}, we have
	\[
		H^{\bullet}\left(\GammaL,\cCLZn\right)
		=
		\limd{\Upsilon} H^{\bullet}\left(\Upsilon, \Zn\right).
	\]
	If we choose $L$ to be compact,
	then $\cCLZn$ is discrete, and therefore $H^\bullet(\GammaL,\cCLZn)$ is discrete.
\end{proof}

\begin{lemma}
	\label{cts-cohom}
	We have
	\(
		H^{0}_{\cts}(L, \cCLZn)
		=
		\Zn
	\)
	and $H^s_\cts(L,\cCLZn)=0$ for $s>0$.
	In particular the groups $H^{s}_{\cts}(L, \cCLZn)$ are Hausdorff.
\end{lemma}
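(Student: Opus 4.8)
The plan is to compute $H^\bullet_\cts(L,\cCLZn)$ by relating the module $\cCLZn$ to an induced module and then applying Shapiro's lemma, exactly in the spirit of the proof of \autoref{reform4}. The point is that $\cCLZn$, viewed as a module for the \emph{right} $L$-action, is essentially a space of functions on $L$, and functions on a group under translation form the prototypical induced (= coinduced, since we are over a profinite-type situation) module, whose cohomology is that of the trivial subgroup.

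First I would reduce to the case that $L$ is compact and open. If $K\subset L$ is open of finite index then, as a module for the right action of $L$, one has $\cCLZn \cong \ind_K^L\big(\cCKZn\big)$ — a continuous function on $L$ is the same data as a $K$-equivariant family of continuous functions on $K$-cosets — so \nameref{shapiro} gives $H^\bullet_\cts(L,\cCLZn)=H^\bullet_\cts(K,\cCKZn)$; thus the answer is independent of $L$ and we may take $L$ compact. (One must check the topological hypotheses of \nameref{shapiro} hold: $L$ is profinite, hence metrizable, compact, totally disconnected, separable and $\sigma$-compact, and $\cCLZn$ with the compact-open topology is Polonais.) Next, for $L$ compact, $\cCLZn = \limd{U} \ind_U^L(\Zn)$ where $U$ runs over the open normal subgroups of $L$; equivalently $\cCLZn$ is the smooth induction of $\Zn$ from the trivial subgroup $\{1\}$ up to $L$. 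Since continuous cohomology of a profinite group commutes with filtered direct limits of discrete coefficient modules, $H^\bullet_\cts(L,\cCLZn)=\limd{U} H^\bullet_\cts\big(L,\ind_U^L\Zn\big)=\limd{U} H^\bullet_\cts(U,\Zn)$ by \nameref{shapiro} again. But $\ind_U^L\Zn = \ind_{\{1\}}^{L/U}\Zn$ is nothing but the regular representation of the finite group $L/U$ over $\Zn$, which is (co)induced from the trivial subgroup, so $H^s_\cts(U,\Zn)$ appearing here is really $H^s$ of the trivial group with coefficients $\Zn$: that is $\Zn$ for $s=0$ and $0$ for $s>0$. Passing to the limit gives $H^0_\cts(L,\cCLZn)=\Zn$ and $H^s_\cts(L,\cCLZn)=0$ for $s>0$. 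The $H^0$ computation can also be seen directly: a continuous $L$-invariant function $L\to\Zn$ for the translation action is a constant, so $H^0=\Zn$. Finally, $\Zn$ is discrete hence Hausdorff, and the vanishing groups are trivially Hausdorff, giving the last assertion.

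The main obstacle I anticipate is purely bookkeeping at the level of topology: making sure that the isomorphism $\cCLZn\cong\ind_K^L\cCKZn$ is an isomorphism of \emph{topological} $L$-modules (compact-open topology on both sides), and that $\cCLZn$ for compact $L$ is genuinely the direct limit, in the category of topological modules, of the finite-dimensional pieces $\ind_U^L\Zn$ — i.e. that every continuous function $L\to\Zn$ is locally constant, which holds because $\Zn$ is discrete and $L$ is compact. Once these identifications are in place, the interchange of $H^\bullet_\cts$ with the direct limit and the vanishing of higher cohomology of an induced-from-trivial module are standard (the relevant facts are recorded in \cite{casselman-wigner}). I would also remark that the result is consistent with the Hochschild--Serre spectral sequence for $1\to L\to \GammaL\cdot L\to\GammaL\backslash(\GammaL\cdot L)\to 1$-type comparisons used elsewhere in the paper, but the direct argument above is the cleanest route.
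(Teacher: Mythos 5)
Your overall strategy is the right one, and it is in fact the paper's: with the paper's definition of $\ind_H^G$, one has $\cCLZn=\ind_{1}^{L}(\Zn)$ on the nose (the $L$-action being right translation), the trivial subgroup is closed in $L$, and \nameref{shapiro} applies directly, with no compactness hypothesis. That one-line argument is the paper's entire proof. All of your preliminary reductions — passing to a compact open $K\subset L$ (which, incidentally, has \emph{infinite} index when $L$ is a non-compact $S$-arithmetic level, though the isomorphism $\cCLZn\cong\ind_K^L\cCKZn$ holds anyway) and then writing $\cCLZn$ as a direct limit over open normal subgroups $U$ — are superfluous.

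More seriously, the final step of your argument as written is incorrect. Having correctly arrived at $H^\bullet_\cts(L,\cCLZn)=\limd{U}H^\bullet_\cts(U,\Zn)$, you assert that each $H^s_\cts(U,\Zn)$ ``is really $H^s$ of the trivial group with coefficients $\Zn$.'' That is false for a fixed $U$: since $U$ acts trivially on $\Zn$, $H^1_\cts(U,\Zn)=\Hom_\cts(U,\Zn)$, which is not the cohomology of the trivial group and in general does not vanish. The slip is a conflation of $H^s_\cts(L,\ind_U^L\Zn)$ with $H^s(L/U,\ind_U^L\Zn)$; these differ because inflation from the finite quotient $L/U$ is not an isomorphism — in the Hochschild--Serre spectral sequence for $U\trianglelefteq L$ the higher rows $H^r(L/U,H^s_\cts(U,\ind_U^L\Zn))$ with $s>0$ need not vanish. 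What is true is only that the \emph{direct limit} vanishes for $s>0$: any class in $H^s_\cts(U,\Zn)$ is inflated from some finite quotient $U/V$ (the coefficients being finite and discrete), and hence restricts to zero on $V$, so the transition maps eventually kill every class. Either make that transition-map argument explicit, or — better — delete the limits entirely and apply \nameref{shapiro} once to $H=\{1\}\subset L$.
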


\begin{proof}
	As a continuous $L$-module, we have
	\(
		\cCLZn
		=
		\ind_{1}^{L}
		(\Zn).
	\)
	The result follows from this using \nameref{shapiro}.
\end{proof}

\begin{proposition}
	\label{proposition spectral sequence}
	Let $L$ be any open subgroup of $\hGQ$.
	Then there is a first quadrant spectral sequence
	$E_2^{r,s} = H^{r}_{\cts}(L, \bar H^{s}(\Zn))$
	which converges to $H^{r+s}(\GammaL, \Zn)$.
\end{proposition}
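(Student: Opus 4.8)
The plan is to deduce the spectral sequence by comparing the two Hochschild--Serre spectral sequences attached to the group $\Delta := \GammaL \times L$ acting on $\cCLZn$, where — as in \S\ref{sect:5.1} — $\GammaL$ acts by left translation and $L$ by right translation, two commuting actions that together make $\cCLZn$ a $\Delta$-module. First I would check that $\Delta$ falls within the scope of the continuous-cohomology formalism: $\GammaL$ is countable and discrete, $L$ is an open subgroup of $\hGQ$ and hence (since $\Cong(G)$ is finite) locally compact, totally disconnected, metrizable, separable and $\sigma$-compact, and all of these properties pass to the product $\Delta$; moreover $\cCLZn$, with the compact--open topology, is a Polonais continuous $\Delta$-module. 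The subgroups $\GammaL\times 1$ and $1\times L$ are closed and normal in $\Delta$, with quotients $L$ and $\GammaL$ respectively.

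The next step is to run Hochschild--Serre for the normal subgroup $1\times L$. This is permitted because $H^{s}_\cts(L,\cCLZn)$ is Hausdorff for every $s$, by \autoref{cts-cohom}. That same lemma says this coefficient cohomology vanishes for $s>0$ and equals $\Zn$ for $s=0$ — and on the constant functions $\GammaL$ acts trivially — so the spectral sequence is concentrated in the row $s=0$ and degenerates, giving a canonical identification $H^{\bullet}(\Delta,\cCLZn) = H^{\bullet}(\GammaL,\Zn)$.

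Then I would run Hochschild--Serre for the other normal subgroup $\GammaL\times 1$, with quotient $L$. This is permitted because, by \autoref{reform4}, the groups $H^{s}(\GammaL,\cCLZn)$ are discrete, in particular Hausdorff. The spectral sequence reads
\[
	E_{2}^{r,s}
	=
	H^{r}_{\cts}\!\left(L,\, H^{s}(\GammaL,\cCLZn)\right)
	\;\Longrightarrow\;
	H^{r+s}(\Delta,\cCLZn).
\]
Invoking \autoref{reform4} once more to rewrite $H^{s}(\GammaL,\cCLZn)=\bar H^{s}(\Zn)$ as $L$-modules, and using the identification from the previous step to evaluate the abutment, this becomes $E_{2}^{r,s}=H^{r}_{\cts}(L,\bar H^{s}(\Zn))\Rightarrow H^{r+s}(\GammaL,\Zn)$, which is precisely the claim.

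The argument is essentially formal once \autoref{reform4} and \autoref{cts-cohom} are available, so the genuine work has already been done. The only points requiring attention are the verification of the topological hypotheses on $\Delta$ and, above all, the Hausdorffness of both types of coefficient cohomology — which is exactly what licenses the Hochschild--Serre spectral sequence in this topological setting; no convergence issues arise since both spectral sequences are first-quadrant.
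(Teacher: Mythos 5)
Your proposal is correct and follows essentially the same route as the paper: form the product group $\GammaL\times L$ acting on $\cCLZn$, use the Hausdorffness statements of \autoref{cts-cohom} and \autoref{reform4} to justify both Hochschild--Serre spectral sequences, collapse the one for $1\times L$ to identify the common abutment with $H^{\bullet}(\GammaL,\Zn)$, and read off the other as the desired spectral sequence. Your write-up is merely a bit more explicit than the paper's about verifying the topological hypotheses on the product group.
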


\begin{proof}
	We've seen that
	$H^\bullet_\cts(L,\cCLZn)$ and
	$H^\bullet(\GammaL,\cCLZn)$
	are both Hausdorff.
	We therefore have two Hochschild--Serre spectral sequences,
	both of which converge to $H^{r+s}_{\cts}(\GammaL\times L,\cCLZn)$:
	\[
		H^{r}_{\cts}(L,H^{s}(\GammaL,\cCLZn)),
		\qquad
		H^{r}(\GammaL,H^{s}_{\cts}(L,\cCLZn)).
	\]	
	By \autoref{cts-cohom}, the second of these two spectral sequence collapses and we have
	$H^{\bullet}_{\cts}(\GammaL\times L,\cCLZn)= H^{\bullet}(\GammaL,\Zn)$.
	The result now follows from \autoref{reform4}.
\end{proof}

\subsection{Low degree terms}
We shall now describe some of the low degree terms of the spectral sequence of \autoref{proposition spectral sequence}.

\begin{lemma}
	\label{SmallHmod}
	With the notation introduced above,
	\[
		\bar H^{0}(\Zn)
		=
		\Zn,
		\qquad
		\bar H^{1}(\Zn)
		=
		0.
	\]
\end{lemma}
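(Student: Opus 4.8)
The plan is to evaluate both groups directly from the definition of the directed colimit defining $\bar H^\bullet(\Zn)$, using only that $\Zn$ is finite, that every arithmetic subgroup acts trivially on it, and that a finite-index subgroup of an arithmetic group is again arithmetic.

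For the degree-zero statement: since each arithmetic subgroup $\Upsilon$ acts trivially on $\Zn$, we have $H^0(\Upsilon,\Zn) = (\Zn)^\Upsilon = \Zn$, and for $\Upsilon' \subseteq \Upsilon$ the restriction map $H^0(\Upsilon,\Zn) \to H^0(\Upsilon',\Zn)$ is the identity map of $\Zn$. Hence all the transition maps in the directed system are identities, and the colimit is $\bar H^0(\Zn) = \Zn$.

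For the degree-one statement: because the action is trivial, $H^1(\Upsilon,\Zn) = \Hom(\Upsilon,\Zn)$. An arbitrary element of $\bar H^1(\Zn)$ is represented by a homomorphism $\phi \colon \Upsilon_0 \to \Zn$ for some arithmetic subgroup $\Upsilon_0$. Set $\Upsilon_1 = \ker\phi$. Since $\Zn$ is finite, $\Upsilon_1$ has finite index in $\Upsilon_0$; being of finite index in an arithmetic group, it is commensurable with a congruence subgroup, hence itself arithmetic. The restriction of $\phi$ to $\Upsilon_1$ is the zero homomorphism, so the class of $\phi$ already dies in $H^1(\Upsilon_1,\Zn)$, and therefore its image in $\bar H^1(\Zn)$ vanishes. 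As the class was arbitrary, $\bar H^1(\Zn) = 0$. There is no real obstacle in either part; the only background facts used are that the arithmetic subgroups of $G$ form a directed set under inclusion (any two contain their intersection, which has finite index in each) and that finite-index subgroups of arithmetic groups are arithmetic, both of which are standard.
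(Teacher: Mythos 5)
Your proof is correct and follows essentially the same route as the paper: in degree zero the trivial action gives $H^0(\Upsilon,\Zn)=\Zn$ with identity transition maps, and in degree one a class is a homomorphism $\Upsilon\to\Zn$ whose kernel is an arithmetic subgroup on which the class restricts to zero. The extra remarks you include (finiteness of $\Zn$ ensuring the kernel has finite index, hence is arithmetic) are exactly the implicit justifications in the paper's argument.
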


\begin{proof}
	For $\bar H^{0}$, note that for any arithmetic group $\Upsilon$,
	$$
		H^{0}(\Upsilon,\Z/p^{r})=\Zn.
	$$
	Furthermore the restriction maps from one of these groups to another,
	are all the identity map.
	For $\bar H^{1}$, we must show that for every element
	$\sigma\in H^{1}(\Upsilon,\Zn)$, there is an arithmetic subgroup
	$\Upsilon'\subset\Upsilon$, such that the restriction of $\sigma$ to $\Upsilon'$
	is zero.
	Any such $\sigma$ is a homomorphism $\Upsilon \to \Zn$,
	so we may simply set $\Upsilon' = \ker\sigma$.
\end{proof}

By \autoref{SmallHmod}, we know that
$E_{2}^{r,0}=H^{r}_{\cts}(L,\Zn)$ and
$E_{2}^{r,1}=0$.
Therefore the bottom left corner of the $E_2$ sheet of the spectral sequence looks like this:
\[
	\begin{tikzcd}
		\bar H^2(\Zn)^L \ar{drr}\\
		0 \ar{drr} &0 \ar{drr}&0 \\
		\Zn & H^1_\cts(L,\Zn) &H^2_\cts(L,\Zn) &H^3_\cts(L,\Zn)	
	\end{tikzcd}
\]
These groups all remain the same in the $E_3$ sheet,
where we have a map $\bar H^2(\Zn)^L \to H^3_\cts(L,\Zn)$.
\begin{equation}
	\label{equation E3 sheet}
	\begin{tikzcd}
		\bar H^2(\Zn)^L \ar{ddrrr}\\
		0  &0 &0 \\
		\Zn & H^1_\cts(L,\Zn) &H^2_\cts(L,\Zn) &H^3_\cts(L,\Zn)	
	\end{tikzcd}
\end{equation}
This map is part of the exact sequence:
\begin{equation}
	\label{low terms finite}
	H^{2}(\GammaL,\Zn)
	\to
	\left(\bar H^{2}(\Zn)\right)^{L}
	\to
	H^3_\cts(L, \Zn)
	\to
	H^{3}(\GammaL,\Zn).
\end{equation}

We now recall the theorem which we are proving:

\begin{thm}
	Let $S$ be a finite set of prime numbers and let $L$ be an $S$-arithmetic level.
	Then the group $\bar H^2(\Zn)^L$ contains infinitely many elements of order $n$.
\end{thm}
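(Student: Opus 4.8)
The plan is to exploit the exact sequence \eqref{low terms finite} together with a careful choice of level so that the ``ambient'' $S$-arithmetic cohomology group $H^3(\GammaL,\Zn)$ does not obstruct lifting classes from $H^3_\cts(L,\Zn)$. First I would note that $\bar H^2(\Zn)^L$ receives the kernel of the map $H^3_\cts(L,\Zn) \to H^3(\GammaL,\Zn)$, so it suffices to produce, for infinitely many choices of level $L$ (or equivalently by varying within a fixed $L$), classes of order exactly $n$ in $H^3_\cts(L,\Zn)$ that restrict to zero in $H^3(\GammaL,\Zn)$. The point of taking an $S$-arithmetic level, rather than a compact one, is that the cohomology $H^3(\GammaL,\Zn)$ of the $S$-arithmetic group $\GammaL$ stabilizes (via \autoref{proposition projective S-arithmetic} and the results of \cite{blasius franke gruenewald}) to relative Lie algebra cohomology as $S$ grows, while $H^3_\cts(L,\Zn)$ contains a contribution from the factors $G(\Qp)$, $p\in S$, whose size grows with $|S|$. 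So the strategy is: enlarge $S$ to make the source huge and the target bounded.

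The key computational input is the continuous cohomology of the local factors: for each prime $p$ one has a Künneth-type decomposition of $H^\bullet_\cts(L,\Zn)$ built out of $H^\bullet_\cts(G(\Qp),\Zn)$ for $p \in S$, the compact factors $H^\bullet_\cts(K_p,\Zn)$, and $H^\bullet_\cts(\Cong(G),\Zn)$. The groups $H^3_\cts(G(\Qp),\Zn)$ (equivalently, via the exact sequence $0\to\Zn\to\Z/n^2\to\Zn\to 0$ and the $\Zl$-theory, related to $H^3_\cts(G(\Qp),\Zl)$, which is known to be free of rank one by Lazard/building-theoretic arguments — this is exactly the ``cohomology of finite groups of Lie type'' input promised for section 5) each contain an element of order $n$; there are $|S|$ of these independent contributions. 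So $H^3_\cts(L,\Zn)$ contains a subgroup of the form $(\Zn)^{|S|}$ coming from the local factors in $S$, plus bounded pieces. Since $H^3(\GammaL,\Zn)$ is bounded independently of $|S|$ once $|S|$ is large (it is essentially $H^3(\gg,\gk,\Zn)$ up to a bounded correction, again by the stabilization result), a dimension count forces the kernel of $H^3_\cts(L,\Zn)\to H^3(\GammaL,\Zn)$ to be large; in particular it contains elements of order exactly $n$ once $|S|$ exceeds a fixed bound, and enlarging $S$ further produces infinitely many such elements whose images in $\bar H^2(\Zn)^L$ (for a fixed small level $L$ contained in all the larger ones, using that $\bar H^2(\Zn)$ is level-independent by \autoref{reform4}) are distinct.

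Making ``distinct and of order $n$'' precise is where the care is needed: one should pin down the local classes explicitly (e.g.\ the class pulled back from the building, or from $H^3_\cts(\SL_2(\Qp),\Zn)$ inside a rank-one subgroup), verify they are nonzero in $\bar H^2(\Zn)^L$ by checking that they are not hit from $H^2(\GammaL,\Zn)$ — here one uses that $H^2(\GammaL,\Zn)$ also stabilizes, so only finitely many of the infinitely many local classes can lie in its image — and verify the order is not reduced, which follows because the reduction map $H^3_\cts(\bullet,\Z/n^2)\to H^3_\cts(\bullet,\Zn)$ is understood from the $\Zl$-computation. I expect the main obstacle to be precisely the control of $H^3(\GammaL,\Zn)$: the clean statement $H^\bullet(G(\Q),\C)=H^\bullet(\gg,\gk,\C)$ is over $\C$, and one needs a torsion-coefficient analogue good enough to bound $H^3(\GammaL,\Zn)$ uniformly in $S$, or else one works with $\Zl$ coefficients throughout (proving the $\Zl$-version of \autoref{reform2} first) and deduces the $\Zn$ statement afterwards. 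The secondary obstacle is establishing that the local continuous cohomology groups $H^3_\cts(G(\Qp),\Zn)$ are as large as claimed and that their classes survive restriction to $\GammaL$-cohomology nontrivially — this is the content deferred to section 5 and is the genuinely technical heart of the argument.
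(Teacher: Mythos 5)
You begin exactly as the paper does: the five-term exact sequence of low-degree terms relating $H^2(\GammaL,\Zn)$, $\bar H^{2}(\Zn)^{L}$, $H^3_\cts(L,\Zn)$ and $H^{3}(\GammaL,\Zn)$, together with the finiteness of $H^\bullet(\GammaL,\Zn)$ coming from Borel--Serre, reduces the theorem to producing infinitely many elements of order $n$ in $H^3_\cts(L,\Zn)$. From that point on your strategy diverges from the paper's and does not close. You look for the order-$n$ classes in the factors $G(\Qp)$ for $p\in S$ and propose to enlarge $S$; the paper instead keeps $S$ (and $L$) fixed and finds the classes in the compact part of $L$ away from $S$: for all but finitely many (``tame'') primes $p\notin S$ the subgroup $K_p$ lifts to $\hGQ$ and $\Ktame=\prod_{p\ \mathrm{tame}}K_p^*$ is a direct summand of $L$, the K\"unneth formula exhibits $\bigoplus_{p\ \mathrm{tame}} H^3(G(\Fp),\Zn)$ inside $H^3_\cts(L,\Zn)$, and the technical input of Section 5 (\autoref{finite lie}) is that $H^3(G(\Fp),\Zn)$ contains an element of order $n$ for infinitely many $p$, proved via Chebotarev and a Cartan--Eilenberg invariant-class argument on the split torus of $G(\Fp)$. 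This gives infinitely many independent order-$n$ classes at the fixed level, and only the finiteness of $H^{3}(\GammaL,\Zn)$ --- not any uniformity in $S$ --- is required.

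The concrete gaps in your route are the following. First, your key local claim, that $H^3_\cts(G(\Qp),\Zn)$ is large because $H^3_\cts(G(\Qp),\Zl)$ is ``free of rank one by Lazard/building-theoretic arguments,'' is not correct as stated: Lazard's theory applies to compact $l$-adic analytic groups with $l=p$, while for $p\neq l$ the building spectral sequence shows $H^s_\cts(G(\Qp),\Ql)=0$ for all $s>0$ (this is precisely what the paper proves in Section 6), so there is no free rank-one part to exploit; any contribution with $\Zn$ coefficients would be torsion coming from the parahoric subgroups, which you have not computed. Second, your approach requires $H^3(\GammaL,\Zn)$ to be bounded uniformly as $S$ grows; the Blasius--Franke--Gr\"unewald stabilization controls $\C$-coefficients only and says nothing about torsion --- you flag this yourself as the main obstacle, but it is not resolved, and the paper's argument is structured specifically so as never to need such a bound. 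Third, you have misidentified the deferred technical result: Section 5 concerns the cohomology of the finite groups of Lie type $G(\Fp)$ at tame primes, not the $l$-adic continuous cohomology of $G(\Qp)$. For these reasons the proposal does not constitute a proof.
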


\begin{proof}
	Let $L$ be an $S$-arithmetic level.
	In this case the group $\GammaL$ is an $S$-arithmetic group.
	By the theory of the Borel--Serre compactification,
	there is a resolution of $\Z$ as a $\GammaL$-module consisting of finitely generated $\Z[\GammaL]$-modules.
	This implies that the cohomology groups
	$H^r(\GammaL,\Zn)$ are all finite.
	In view of this, the sequence in \autoref{low terms finite}
	has the form
	\[
		\text{finite} 
		\to
		\left(\bar H^{2}(\Zn)\right)^{L}
		\to
		H^3_\cts(L, \Zn)
		\to
		\text{finite}.
	\]
	To prove the theorem, it is therefore sufficient to show that $H^3_\cts(L,\Zn)$ contains infinitely many elements of order $n$.
	
	It will be useful to have the following notation.
	A prime number $p$ will be called a \emph{tame} prime
	if it satisfies all of the following conditions:
	\begin{enumerate}
		\item
		$p$ is not in the finite set $S$;
		\item
		$p$ is not a factor of $|\Cong(G)|$;
		\item
		$p$ is not a factor of $n$;
		\item
		$G$ is unramified over $\Qp$.
		\item
		The group $K_p$ is a maximal hyperspecial compact open subgroup of $G(\Qp)$
		(see \cite{tits}).
		This implies that if we let $K_p^0$ be the maximal pro-$p$ normal subgroup of $K_p$,
		then the quotient $G(\Fp)=K_p/K_p^0$
		is a product of some of the simply connected finite Lie groups described in \cite{steinberg}.
		\item
		$H^r(G(\Fp),\Q/\Z)=0$ for $r=1,2$.
		We recall from \cite{steinberg} that this condition is satisfied for all but finitely many of the groups $G(\Fp)$.
	\end{enumerate}
	We note that all but finitely many primes are tame.
	For each tame prime $p$, we shall write $K_p^*$ for a lift of $K_p$ to $\hGQ$;
	note that such a lift exists and is unique by conditions (2) and (6).
	The group $L$ contains the following subgroup
	\[
		K_\tame=\prod_{p \text{ tame}} K_p^*,
	\]
	Evidently, $\pr(K_\tame)$ is a direct summand of $\pr(L)$; since $K_\tame \cap \Cong(G)$ is trivial, it follows that $K_\tame$ is a direct summand of $L$.
	Hence by the K\"unneth formula,
	$H^3_\cts(K_\tame, \Zn)$ is a direct summand of
	$H^3_\cts(L, \Zn)$.
	It is therefore sufficient to prove that
	$H^3_\cts(K_\tame,\Zn)$ contains infinitely many
	elements of order $n$.

	Since the coefficient ring $\Zn$ is finite, we have (by Proposition 8, section 2.2 of \cite{serre-galoiscohomology}) a decomposition
	\[
		H^3_\cts(K_\tame ,\Zn) = \limd{U \text{ finite}} H^3_\cts\left( \prod_{p\in U} K_p, \Zn\right).
	\]
	By the K\"unneth formula, the group on the right contains a subgroup of the form
	\[
		\bigoplus_{p \text{ tame}} H^3_\cts(K_p, \Zn).
	\]
	By conditions (3) and (5) for tame primes $p$,
	we may identify $H^\bullet_\cts(K_p,\Zn)$ with $H^\bullet(G(\Fp),\Zn)$.
	To prove the theorem, it is therefore sufficient so show that there are infinitely many tame primes $p$, such that $H^3(G(\Fp),\Zn)$
	contains an element of order $n$.
	This follows from \autoref{finite lie}, which will be proved in the next section.
\end{proof}

\section{A lemma on the cohomology of finite Lie groups}

In this section we shall prove \autoref{finite lie}, which completes the proof of \autoref{reform2}.

Before stating the theorem, we note that if $G$ is an algebraic group over $\Q$, then we may write $G$ in the form $\cG \times_\Z \Q$, for some group scheme $\cG$ over $\Z$.
The groups $\cG(\Fp)$ depend on the $\cG$, not just on $G$.
Nevertheless if we alter the group scheme $\cG$ then only finitely many of the groups $\cG(\Fp)$ will change.
Because of this, the following statement makes sense, where we are writing $G(\Fp)$ in place of $\cG(\Fp)$ for some fixed choice of $\cG$.

\begin{theorem}
	\label{finite lie}
	Let $G/\Q$ be a simple, simply connected algebraic group.
	For every positive integer $n$ there are infinitely many prime numbers $p$, such that $H^3(G(\Fp),\Zn)$ contains an element of order $n$.
\end{theorem}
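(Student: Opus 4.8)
The plan is to reduce the statement to the existence, for infinitely many $p$, of a subgroup isomorphic to $\Zn$ inside the integral cohomology group $H^{4}(G(\Fp),\Z)$, and then to produce such a subgroup by computing the second Chern class of (a Brauer lift of) the adjoint representation, restricted to a suitable cyclic subgroup. For the first reduction, write $\Gamma=G(\Fp)$ and use the Bockstein sequence attached to $0\to\Z\xrightarrow{n}\Z\to\Zn\to 0$:
\[
 0\to H^{3}(\Gamma,\Z)/n\to H^{3}(\Gamma,\Zn)\xrightarrow{\beta}H^{4}(\Gamma,\Z)[n]\to 0,
\]
where $[n]$ denotes the $n$-torsion subgroup. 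Since $H^{3}(\Gamma,\Zn)$ is annihilated by $n$, any $\beta$-preimage of an element of order $n$ has order exactly $n$; so it is enough to show that for every $n$ there are infinitely many primes $p$ for which $\Zn$ embeds into $H^{4}(G(\Fp),\Z)$. (We do not even need the vanishing of the Schur multiplier of $G(\Fp)$ for this step.)

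\textbf{A cyclic subgroup.} Fix a smooth affine $\Z$-model $\cG$ with generic fibre $G$, which we may take semisimple and simply connected. As $G$ is simple over $\Q$ we have $G=\Rest^{k}_{\Q}(G_{0})$ with $G_{0}$ absolutely simple over a number field $k$, and for $p$ of good reduction $G(\Fp)\cong\prod_{\gp\mid p}G_{0}(\F_{q_{\gp}})$ with $q_{\gp}=p^{f(\gp)}$. Fixing one $\gp_{0}\mid p$ and writing $q=q_{\gp_{0}}$, the group $G_{0}(\F_{q})$ is a direct factor of $G(\Fp)$, hence $H^{4}(G_{0}(\F_{q}),\Z)$ is a direct summand of $H^{4}(G(\Fp),\Z)$; so it suffices to embed $\Zn$ in $H^{4}(G_{0}(\F_{q}),\Z)$, and we note that $p-1\mid q-1$. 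For all but finitely many $p$ the group $G_{0}/\F_{q}$ is semisimple, simply connected and quasi-split of positive $\F_{q}$-rank, hence contains a one-dimensional $\F_{q}$-split subtorus; a primitive $\F_{q}$-rational cocharacter $\lambda\colon\G_m\to G_{0}$ then cuts out a cyclic subgroup $C\cong\Z/(q-1)$ of $G_{0}(\F_{q})$. Recall that $H^{\bullet}(C,\Z)=\Z[x]/((q-1)x)$ with $\deg x=2$, that $c_{1}$ of a character $\chi_{a}\colon C\to\C^{\times}$ equals $ax$, and hence that a sum of characters has $c_{2}=\big(\sum_{i<j}a_{i}a_{j}\big)x^{2}$ in $H^{4}(C,\Z)=\Z/(q-1)$.

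\textbf{The Chern class of the adjoint representation.} The adjoint representation of $\cG$ gives a representation of $G_{0}(\F_{q})$ on $\gg\otimes\F_{q}$; lift its class to a virtual complex representation $V\in R_{\C}(G_{0}(\F_{q}))$ by Brauer lifting. For all but finitely many $p$ the group $G_{0}(\F_{q})$ is perfect (Steinberg — this is one of the conditions defining a tame prime), so $H^{2}(G_{0}(\F_{q}),\Z)=0$, the first Chern class vanishes on $R_{\C}(G_{0}(\F_{q}))$, and $c_{2}$ is therefore additive and defines a class $c_{2}(V)\in H^{4}(G_{0}(\F_{q}),\Z)$. Because $|C|$ is prime to $p$, reduction mod $p$ is an isomorphism on the representation rings of $C$, so $V|_{C}$ is the genuine representation obtained by lifting the eigenvalues of $\mathrm{Ad}(\lambda(\xi))$ for $\xi$ a generator of $\F_{q}^{\times}$; these eigenvalues are $\xi^{\langle\alpha,\lambda\rangle}$ over the roots $\alpha$ of $G_{0}$, together with $1$ with multiplicity $\rank(\cG)$. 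Since the root system is stable under $\alpha\mapsto-\alpha$,
\[
 \mathrm{res}^{G_{0}(\F_{q})}_{C}\,c_{2}(V)=\frac{1}{2}\Big(\big(\sum_{\alpha}\langle\alpha,\lambda\rangle\big)^{2}-\sum_{\alpha}\langle\alpha,\lambda\rangle^{2}\Big)x^{2}=-\Big(\sum_{\alpha>0}\langle\alpha,\lambda\rangle^{2}\Big)x^{2}=c_{G}\,x^{2}
\]
in $H^{4}(C,\Z)=\Z/(q-1)$, where $c_{G}=-\sum_{\alpha>0}\langle\alpha,\lambda\rangle^{2}$ is a nonzero integer (nonzero because the roots span, $\cG$ being semisimple) depending only on $\cG$ and $\lambda$, and — since only finitely many $\F_{q}$-forms of $\cG$ occur as $p$ varies — independent of $p$.

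\textbf{Conclusion and the main obstacle.} By Dirichlet's theorem there are infinitely many primes $p\equiv 1\pmod{n\,|c_{G}|}$, and all but finitely many of these satisfy all the conditions imposed above. For such $p$ we have $n\,|c_{G}|\mid p-1\mid q-1$, so $c_{G}x^{2}$ has order $(q-1)/\gcd(c_{G},q-1)$ in $\Z/(q-1)$, which is divisible by $n$; hence $c_{2}(V)$ has order divisible by $n$ in $H^{4}(G_{0}(\F_{q}),\Z)$, so $\Zn$ embeds there, and therefore in $H^{4}(G(\Fp),\Z)$. By the first step, $H^{3}(G(\Fp),\Zn)$ then contains an element of order $n$, proving the theorem. (Alternatively one could invoke the homotopy-theoretic computations of Quillen and Friedlander, which identify the prime-to-$p$ part of $H^{4}(G(\F_{q}),\Z)$ with $\Z/(q^{2}-1)$, but the argument above is more self-contained.) The delicate point is the Chern-class computation: one must check carefully that the Brauer lift of the adjoint module has the stated restriction to $C$ — which rests on $|C|$ being prime to $p$, so that reduction mod $p$ is invertible on $\mathrm R_{\C}(C)$ — and that $c_{2}$ really is well defined on virtual representations, which rests on the perfectness of $G_{0}(\F_{q})$; arranging the cocharacter $\lambda$, hence the constant $c_{G}$, uniformly in $p$ for non-split $\cG$ is a minor additional point, handled by noting that only finitely many $\F_{q}$-forms of $\cG$ occur.
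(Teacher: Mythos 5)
Your proof is correct, but it takes a genuinely different route from the paper's. Both arguments reduce via Bockstein to producing $n$-torsion in $H^4(G(\Fp),\Z)$, and both ultimately exploit the same class — yours is $c_2$ of (the Brauer lift of) the adjoint representation, whose restriction to a split maximal torus is, modulo the vanishing term $\bigl(\sum_\alpha\langle\alpha,\lambda\rangle\bigr)^2$, exactly $-\tfrac12\sum_{\alpha\in\Phi}\alpha\cup\alpha$, i.e.\ the Killing-form class $q=\sum_\alpha \alpha\cup\alpha$ that the paper works with. The difference is in how the class is produced and how its order is controlled. The paper goes \emph{up}: it checks that $q$ is an invariant class on the full split maximal torus $T(\Fp)$ in the sense of Cartan--Eilenberg, corestricts it to $G(\Fp)$, and bounds the order from below using $\Rest\circ\Corest=[G(\Fp):T(\Fp)]$ together with an $l$-adic evaluation of that index via Steinberg's order formula and the degrees $d_1,\dots,d_r$ of the Weyl-group invariants; the primes are chosen by Chebotarev to split in $k(\zeta_{d_1\cdots d_r e n})$. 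You go \emph{down}: the class already lives on $G_0(\F_q)$ as a Chern class (well defined on virtual representations because $G_0(\F_q)$ is perfect, so $c_1$ vanishes), and its order is detected by restriction to a single cyclic subgroup $\F_q^\times$, with Dirichlet supplying $p\equiv 1\bmod n\lvert c_G\rvert$. Your approach buys a cleaner order computation — no transfer argument, no index valuation, no invariance check, and no need for the degrees $d_i$ — at the cost of importing Brauer lifting and the formalism of Chern classes of virtual representations; the paper's approach is more self-contained on the representation-theoretic side. Two small points in your write-up: the dependence of $\lambda$ (hence $c_G$) on the $\F_q$-form of $G_0$ is handled adequately by taking a common multiple over the finitely many forms, but it is cleaner to simply impose (as the paper does) that $p$ split completely in the splitting field of $G_0$, so that only the split form occurs; and the trivial eigenvalue of $\mathrm{Ad}(\lambda(\xi))$ occurs with multiplicity $\rank(G_0)$, not $\rank(\cG)$ — harmless, since those eigenvalues contribute nothing to $c_2$.
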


I assume this sort of result is known to experts, and many special cases are consequences of results in algebraic K-theory (for example the results of \cite{quillen} imply the case $\SL_r$).

In the proof we shall use the Cartan--Eilenberg theory of invariant cohomology classes, which we  recall now.
Let $T$ be a subgroup of a finite group $G$,
and let $A$ be a $G$-module.
We shall write $\Rest^G_T$ and $\Corest^T_G$ for the restriction and corestriction maps
 between $H^\bullet(G,A)$
and $H^\bullet(T,A)$.
A cohomology class $\sigma \in H^r(T,A)$ is called
\emph{invariant} if for every $g\in G$,
\[
	\Rest^{T}_{T \cap T^g} (\sigma)
	=
	\Rest^{T^g}_{T \cap T^g} (\sigma^g) .
\]
We'll use the following result.

\begin{proposition}[Chapter XII, Proposition 9.4 of \cite{cartaneilenberg}]
	\label{prop cartan eilenberg}
	Let $T$ be a subgroup of a finite group $G$.
	If $\sigma \in H^\bullet(T,A)$ is an invariant cohomology class.
	Then
	\[
		\Rest^G_T \left( \Corest^T_G (\sigma) \right) = [G:T] \cdot \sigma .
	\]
\end{proposition}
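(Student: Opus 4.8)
The plan is to deduce the formula from the Mackey double coset formula for the composite $\Rest^G_T \circ \Corest^T_G$. Recall that, for any subgroups $H, K$ of a finite group $G$ and any $G$-module $A$, one has
\[
	\Rest^G_H \circ \Corest^K_G
	=
	\sum_{HgK} \Corest^{\,H \cap gKg^{-1}}_{H} \circ \kappa_g \circ \Rest^{K}_{\,g^{-1}Hg \,\cap\, K},
\]
where $g$ runs over a set of representatives for the double cosets $H \backslash G / K$ and $\kappa_g \colon H^\bullet(g^{-1}Hg \cap K, A) \xrightarrow{\ \sim\ } H^\bullet(H \cap gKg^{-1}, A)$ is the isomorphism induced by conjugation by $g$ (acting on both the subgroup and the module $A$). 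This identity is itself part of Chapter XII of \cite{cartaneilenberg}; alternatively it can be proved directly by writing the transfer on inhomogeneous cochains with respect to a system of coset representatives for $G/K$ and grouping those representatives according to the double cosets $H\backslash G/K$.

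First I would specialise the formula to $H = K = T$. The $g$-th summand then sends a class $\sigma \in H^\bullet(T,A)$ to $\Corest^{\,T \cap gTg^{-1}}_{T}$ applied to $\kappa_g\bigl(\Rest^T_{\,T \cap T^g}(\sigma)\bigr)$, where $T^g = g^{-1}Tg$. Now I would invoke the invariance hypothesis: by definition, $\sigma$ invariant means $\Rest^T_{\,T\cap T^g}(\sigma) = \Rest^{T^g}_{\,T\cap T^g}(\sigma^g)$ for all $g\in G$, and unwinding the definition of $\sigma^g$ (together with naturality of $\kappa_g$ with respect to restriction) shows that $\kappa_g$ carries $\Rest^T_{\,T\cap T^g}(\sigma)$ to $\Rest^T_{\,T\cap gTg^{-1}}(\sigma)$. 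Hence the $g$-th summand is simply $\Corest^{U}_{T}\circ \Rest^{T}_{U}(\sigma)$ with $U = T\cap gTg^{-1}$, i.e. corestriction immediately after restriction along a single subgroup inclusion $U \le T$. By the elementary identity $\Corest^{U}_{T}\circ \Rest^{T}_{U} = [T:U]\cdot\mathrm{id}$ (valid for any $T$-module), the $g$-th summand contributes $[T : T\cap gTg^{-1}]\cdot \sigma$.

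Finally I would count indices. Since $gTg^{-1}$ and $T^g$ are conjugate, $[T : T\cap gTg^{-1}] = [T : T\cap T^g]$, and the standard orbit count $|TgT| = |T|\cdot[T : T\cap T^g]$ together with the disjoint decomposition $G = \bigsqcup_{g} TgT$ over double coset representatives gives $\sum_{g}[T : T\cap T^g] = |G|/|T| = [G:T]$. Summing the contributions of all double cosets yields $\Rest^G_T\bigl(\Corest^T_G(\sigma)\bigr) = \bigl(\sum_g [T:T\cap T^g]\bigr)\,\sigma = [G:T]\cdot\sigma$, as required. Everything past the Mackey formula is pure bookkeeping with indices; the one place where genuine care is needed is matching the conjugation conventions $T^g = g^{-1}Tg$ versus $gTg^{-1}$, and the corresponding twist on $A$, so that the invariance identity as stated in the paper lines up exactly with the twisted term produced by the double coset formula. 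That matching — rather than any real mathematical difficulty — is the main obstacle, and for that reason I would either quote the Mackey formula verbatim from \cite{cartaneilenberg} in the same conventions, or set up conjugation once at the start of the section and use it consistently.
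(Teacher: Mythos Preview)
The paper does not give its own proof of this proposition; it simply cites Chapter~XII, Proposition~9.4 of \cite{cartaneilenberg}. Your argument via the Mackey double coset formula is correct and is in fact the proof that Cartan--Eilenberg themselves give (the double coset decomposition of $\Rest \circ \Corest$ is developed in that same chapter), so there is nothing to compare: you have reproduced the cited source rather than diverged from it. Your caution about matching the conjugation conventions for $T^g$ and $\sigma^g$ is well placed, since that bookkeeping is the only place one can slip.
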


As a corollary to this, we note the following.
	
\begin{corollary}
	\label{corollary orders}
	Let $T$ be a subgroup of a finite group $G$.
	Let $d$ be a positive integer and $l$ a prime number, such that
	$\big|[G:T]\big|_l=\big|d\big|_l$.
	If $H^r(T,\Z)$ contains an invariant class of order $dl^t$ then $H^r(G,\Z)$ contains an element of order $l^t$.
\end{corollary}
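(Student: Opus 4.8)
The plan is to deduce the corollary from \autoref{prop cartan eilenberg} by a short divisibility computation. First I would set $\tau = \Corest^T_G(\sigma) \in H^r(G,\Z)$, where $\sigma \in H^r(T,\Z)$ is the given invariant class of order $dl^t$. Since $\sigma$ is invariant, \autoref{prop cartan eilenberg} applies and yields the identity $\Rest^G_T(\tau) = [G:T]\cdot\sigma$ in $H^r(T,\Z)$. The whole statement then comes down to understanding the order of the right-hand side and transporting that information back to $H^r(G,\Z)$.

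Next I would carry out the valuation bookkeeping. Write $l^a$ for the exact power of $l$ dividing $d$; the hypothesis $\big|[G:T]\big|_l = \big|d\big|_l$ says that $l^a$ is also the exact power of $l$ dividing $[G:T]$. Since $\sigma$ has order exactly $dl^t$, the element $[G:T]\cdot\sigma$ has order $(dl^t)/\gcd(dl^t,[G:T])$, and the exact power of $l$ dividing this integer is $l^{(a+t)-\min(a+t,\,a)} = l^t$. In particular $l^t$ divides the order of $\Rest^G_T(\tau) = [G:T]\cdot\sigma$. Since $\Rest^G_T$ is a group homomorphism, the order of $\Rest^G_T(\tau)$ divides the order $M$ of $\tau$, so $l^t$ divides $M$. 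Then $(M/l^t)\cdot\tau \in H^r(G,\Z)$ is an element of order exactly $l^t$, which is what we want.

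I do not expect any genuine obstacle: all the substantive work is already contained in \autoref{prop cartan eilenberg}, and the remainder is elementary. The only point worth a moment's care is that one uses that the order of $\sigma$ is \emph{exactly} $dl^t$, so that $[G:T]\cdot\sigma$ still carries an $l$-part of size precisely $l^t$; this is exactly what the hypothesis that $H^r(T,\Z)$ ``contains an invariant class of order $dl^t$'' supplies.
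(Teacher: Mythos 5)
Your proposal is correct and follows essentially the same route as the paper: set $\tau=\Corest^T_G(\sigma)$, use \autoref{prop cartan eilenberg} to see that $\Rest^G_T(\tau)=[G:T]\cdot\sigma$ has order whose $l$-part is exactly $l^t$, and conclude that a suitable multiple of $\tau$ has order $l^t$. Your valuation bookkeeping is just a more explicit version of the paper's one-line computation of $\frac{dl^t}{\gcd(dl^t,[G:T])}$.
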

	
\begin{proof}
	Let $\tau = \Corest^T_G(\sigma)$, where $\sigma$ is the invariant class on $T$ of order $dl^t$.
	By \autoref{prop cartan eilenberg}, the restriction of $\tau$ to $T$ has order $\frac{dl^t}{\gcd( dl^t, [G:T])}$.
	The condition on $d$ implies that the order of $\Rest^G_T(\tau)$ is a multiple of $l^t$.
	Hence the order of $\tau$ is a multiple of $l^t$, so some multiple of $\tau$ has order $l^t$.
\end{proof}

In order to apply the corollary, it will be useful to note the following.

\begin{lemma}
	\label{lemma valuations}
	Let $l$ be a prime number, and let $x$ be an integer such that $x \equiv 1 \bmod 2l$.
	Then for every integer $d$ we have
	\[
		| x^{d}-1 |_l = |d(x-1)|_l.
	\]
\end{lemma}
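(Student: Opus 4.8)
The plan is to prove this by the standard ``lifting the exponent'' argument. Write $v_l$ for the $l$-adic valuation, normalized so that $|a|_l = l^{-v_l(a)}$ (and $v_l(0)=+\infty$); the claim becomes
\[
	v_l(x^d-1) = v_l(d) + v_l(x-1).
\]
First I would dispose of the trivial cases. If $d=0$ both sides are infinite, and if $d<0$ then, since $x\equiv 1\bmod l$ makes $x$ an $l$-adic unit, we have $x^d-1 = -x^{d}(x^{-d}-1)$ with $v_l(x^d)=0$, so the case of $d$ reduces to that of $-d$; hence we may assume $d\ge 1$. Write $x = 1+y$ with $2l\mid y$, so $v_l(x-1)=v_l(y)\ge 1$.

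Next, the case $\gcd(d,l)=1$. Factor
\[
	x^d - 1 = (x-1)\bigl(1 + x + x^2 + \cdots + x^{d-1}\bigr).
\]
Since $x\equiv 1\bmod l$, each $x^i\equiv 1\bmod l$, so the second factor is $\equiv d\bmod l$, which is a unit by hypothesis on $d$. Hence $v_l(x^d-1)=v_l(x-1)$, which is the desired formula when $v_l(d)=0$.

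The key case is $d=l$. Again factor $x^l-1 = (x-1)\sum_{i=0}^{l-1}x^i$ and expand $x^i = (1+y)^i = 1 + iy + \binom{i}{2}y^2 + \cdots$; summing the geometric-type series gives
\[
	\sum_{i=0}^{l-1} x^i = l + \tbinom{l}{2}\,y + (\text{terms divisible by } y^2).
\]
Now $v_l(l)=1$; the term $\binom{l}{2}y = \tfrac{l(l-1)}{2}y$ has valuation $\ge 1 + v_l(y) \ge 2$ when $l$ is odd, while for $l=2$ the sum is simply $1+x = 2+y$ with $v_2(y)\ge 2$ (this is the one place the factor $2$ in the hypothesis $2l\mid y$ is used); and all remaining terms are divisible by $y^2$, hence of valuation $\ge 2$. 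So $v_l\!\left(\sum_{i=0}^{l-1}x^i\right)=1$ and $v_l(x^l-1)=v_l(x-1)+1$. Since $x\equiv 1\bmod 2l$ implies $x^l\equiv 1\bmod 2l$, the hypothesis of the lemma is preserved under raising to the $l$-th power, so this estimate may be iterated.

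Finally I would combine these. Write $d = l^a m$ with $\gcd(m,l)=1$. Applying the coprime case to the base $x^{l^a}$ (which still satisfies $x^{l^a}\equiv 1\bmod 2l$) gives $v_l(x^d-1) = v_l\bigl((x^{l^a})^m - 1\bigr) = v_l(x^{l^a}-1)$, and applying the $d=l$ case $a$ times yields $v_l(x^{l^a}-1) = v_l(x-1)+a$. Hence $v_l(x^d-1) = v_l(x-1) + a = v_l(x-1) + v_l(d)$, as claimed. The only real (and very minor) obstacle is the bookkeeping in the $d=l$ step: checking that both the odd-$l$ and $l=2$ sub-cases are covered by the same valuation estimate, and verifying at each stage of the induction that the congruence $x^{l^a}\equiv 1\bmod 2l$ still holds so that the lemma's hypothesis can be reused.
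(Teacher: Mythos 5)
Your proof is correct, but it takes a genuinely different route from the paper. The paper's argument is a two-line appeal to the $l$-adic logarithm: on the group $1+2l\Z_l$ the series $\log_l$ converges and satisfies $|\log_l(x)|_l=|x-1|_l$, so $|x^d-1|_l=|\log_l(x^d)|_l=|d\log_l(x)|_l=|d(x-1)|_l$; the hypothesis $x\equiv 1\bmod 2l$ is exactly what puts $x$ (and $x^d$) in the domain where $\log_l$ is an isometry, the factor $2$ being needed only when $l=2$. You instead give the classical ``lifting the exponent'' argument: factor out $\sum_{i=0}^{d-1}x^i$, observe it is a unit times $d$ when $\gcd(d,l)=1$, compute its valuation to be exactly $1$ when $d=l$ (using $2l\mid x-1$ precisely in the $l=2$ subcase, just as the paper uses it for convergence of $\log_2$), and iterate after checking the congruence is preserved under $x\mapsto x^l$. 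Your handling of all the edge cases ($d\le 0$, $l=2$, the induction hypothesis at each stage) is sound. What each approach buys: the paper's is shorter and conceptually transparent once one accepts the standard properties of $\log_l$ on $1+2l\Z_l$; yours is entirely elementary and self-contained, at the cost of some case analysis and bookkeeping. Either proof could stand in the paper.
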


\begin{proof}
	We recall that the $l$-adic logarithm function
	$\log_l$ converges on the multiplicative group
	$1+2l\Z_l$.
	If $\log_l$ converges at an element $x$, then we have
	$|\log(x)|_l = | x-1 |_l$.
	Our congruence condition implies that
	$\log_l(x)$ and $\log_l(x^d)$ both converge,
	so we have
	\[
		|x^d-1|_l = |\log_l(x^d)|_l
		= |d \cdot \log_l(x) |_l
		= |d \cdot (x-1)|_l.
	\]
\end{proof}

\begin{proof}[Proof of \autoref{finite lie}]
	By the Chinese remainder theorem, it is sufficient to prove the theorem in the case $n=l^t$, where $l$ is a prime number.

We shall introduce some notation.
We fix a semi-simple model $\cG$ of $G$ over $\Z$, and let
$k$ be a number field such that $\cG$ splits over $\cO_k$.
Let $\cT$ be a maximal torus in $\cG$, defined and split over $\cO_k$.
Let $P$ be the lattice of algebraic characters
$\cT \to \GL_1/\cO_k$.
The roots of $\cG$ with respect to $\cT$ are elements of the lattice $P$.
Consider the element
\[
	Q = \sum_{\alpha\in\Phi} \alpha \otimes \alpha
	\;\in\; \Sym^2(P),
\]
where $\Phi$ is the set of roots.
If we identify elements of $P$ with a group of characters of the Lie algebra $\gt$ of $\cT$, then we may similarly identify
elements of $\Sym^2(P)$ with quadratic forms on $\gt$.
The element $Q$ corresponds to the restriction of the Killing form to $\gt$. Therefore $Q$ is non-zero.

Let $e$ be the largest positive integer,
such that $Q$ is a multiple of $e$ in the lattice
$\Sym^2 (P)$.
Also let $d_1,\cdots, d_r$ be the degrees of the basic polynomial invariants of the Weyl group of $G/k$ (where $r$ is the rank of $G / k$).
The smallest of these degrees is $d_1=2$, and the others depend on the root system
(see \cite{steinberg}).
	By extending the number field $k$ if necessary, we may assume that $k$ contains a primitive root of unity of order $d_1\cdots d_r\cdot e\cdot n$.
	By the Chebotarev density theorem, there are infinitely many prime numbers which split in $k$; we'll show that each of these prime numbers has the desired property.

	From now on we fix a prime number $p$ which splits in $k$, and we are attempting to show that $H^3(\cG(\Fp),\Zn)$ contains an element of order $n$.
	By abusing notation slightly we shall write $G(\Fp)$ for the group $\cG(\Fp)$.
	We may identify $G(\Fp)$
	with $\cG(\cO_k/\gp)$ for some prime ideal $\gp$ above $p$).
	We shall also write $T(\Fp)$ for the subgroup $\cT(\cO_k/\gp)$.

	Identifying $H^3(G(\Fp),\Zn)$ with the
	$n$-torsion in $H^4(G(\Fp),\Z)$,
	we see that it's sufficient to prove there is an
	element of order $n$ in $H^4(G(\Fp),\Z)$.

	We shall use the following formula for the order of
	the the group $G(\Fp)$ (see Theorem 25, in Chapter 9 of \cite{steinberg})
	\[
		|G(\Fp)|
		=
		p^N (p^{d_1}-1) \cdots (p^{d_r}-1).
	\]
	In this formula, $N$ is the number of positive roots; $r$ is the rank and $d_1,\ldots,d_r$ are the degrees of the fundamental invariants on the Weyl group.
	Note also that since $T$ is a split torus of rank $r$, we have
	\[
		| T(\Fp) |
		=
		(p-1)^r.
	\]
	Since $p$ splits in $k$ and $k$ contains an primitive $2l$-th root of unity (because $d_1=2$), we have $p \equiv 1 \bmod 2l$.
	Hence by \autoref{lemma valuations},
	\[
		|p^{d_i}-1 |_l = |d_i(p-1)|_l.
	\]
	We therefore have
	\[
		\big| [ G(\Fp) : T(\Fp) ] \big|_l
		=
		| d_1 \cdots d_r |_l.
	\]
	By \autoref{corollary orders}, it is sufficient to show that $H^4(T(\Fp),\Z)$ has an invariant element of order $d_1 \cdots d_r \cdot n$.
	It will actually be more convenient to find an invariant element of $H^4(T(\Fp),\Z) \otimes (\Fp^\times)^{\otimes 2}$; this is because such an element is canonical, whereas the invariant
	element of $H^4(T(\Fp),\Z)$ would depend on a
	choice of primitive root modulo $p$.

	We shall construct our invariant element of $H^4$ from elements of $H^2$.
	Since $T(\Fp)$ is a finite group, we have
	canonical isomorphisms:
	\[
		H^2(T(\Fp),\Z) \cong H^1(T(\Fp),\Q/\Z)
		\cong \Hom( T(\Fp),\Q/\Z ).
	\]
	Tensoring with $\Fpx$, we get
	\[
		H^2(T(\Fp),\Z) \otimes \Fpx
		\cong
		\Hom( T(\Fp),\Fpx )
		\cong
		P/(p-1)P,
	\]
	where as before, $P$ is the lattice of algebraic characters of $T$.
	
	Recall that the cohomology ring of the cyclic group $\Fp^\times$ is the symmetric algebra on $H^2(\Fp^\times, \Z)$.
	The group $T(\Fp)$ is a product of copies of $\Fpx$, and so by the K\"unneth formula,
	$H^\bullet(T(\Fp),\Z)$ contains as a subring
	the algebra
	$H^\bullet(\Fpx,\Z)^{\otimes r}$,
	which is isomorphic to the symmetric algebra
	on $H^2(\Fpx,\Z)^r$.
	More canonically, this subring is the
	symmetric algebra  on $H^2(T(\Fp),\Z)$.
	In particular, $H^4(T(\Fp),\Z)$ contains  $\Sym^2(H^2(T(\Fp),\Z))$ as a
	subgroup;
	this is the subgroup generated by cup products of elements of $H^2(T(\Fp),\Z)$\footnote{This is a proper subgroup if and only if $r\ge 3$.}.
	From this, we see that $H^4(T(\Fp),\Z)\otimes(\Fpx)^{\otimes 2}$
	contains as a subgroup the group
	\[
		\Sym^2\left(H^2(T(\Fp),\Z) \otimes\Fpx\right)
		\cong
		\Sym^2(P/(p-1)P)
		\cong
		\Sym^2(P)/(p-1).
	\]
	We claim that the following element
	of $H^4(T(\Fp),\Z)$ is an invariant cohomology class:
	\[
		q
		=
		\sum_{\alpha \in \Phi} \alpha \cup \alpha,
	\]
	where $\Phi$ is the root system of $G$ with respect to $T$.
	The element $q$ is evidently in the subgroup
	$\Sym^2(P)/(p-1)$.
	Equivalently, we can regard $q$ as the quadratic function
	$q:T(\Fp) \to (\Fp^\times)^{\otimes 2}$ defined by
	\[
		q(t) = \sum \alpha(t) \otimes \alpha(t).
	\]
	Here we are writing the group $(\Fpx)^{\otimes 2}$ additively.
	
	Suppose $g$ is an element of $G(\Fp)$ and
	suppose that both $t$ and $g^{-1}tg$ are in $T(\Fp)$.
	To show that $q$ is an invariant class, we must show that $q(t)=q(g^{-1}tg)$.
	Evidently we have
	\[
		q(g^{-1}tg) = \sum_{\alpha\in \Phi} \alpha(g^{-1}tg) \otimes \alpha(g^{-1}tg).
	\]
	The numbers $\alpha(t)$ are the non-zero eigenvalues in the action of $t$ on the Lie algebra $\gg \otimes \Fp$.
	These eigenvalues are the same as those of
	$g^{-1} t g$, and so the numbers $\alpha(t)$
	are the same (possibly in a different order) as the numbers $\alpha(g^{-1} t g)$.
	From this it follows that $q(t^g)=q(t)$,
	so $q$ is an invariant class in $H^4(T(\Fp),\Z)\otimes (\Fp^\times)^{\otimes 2}$.

	It remains to determine the order of $q$ in $H^4(T(\Fp),\Z)\otimes (\Fp^\times)^{\otimes 2}$,
	or equivalently the order of $q$ in the subgroup $\Sym^2(P)/(p-1)$.
	By definition, $q$ is the the reduction modulo $p-1$ of the element $Q\in \Sym^2(P)$.
	We defined $e$ to be the largest integer such that $Q$ is a multiple of $e$.
	Since we are assuming that $p\equiv 1 \bmod e$,
	the order of $q$ in $\Sym^2(P)/(p-1)$ is precisely $\frac{p-1}{e}$.

	To summarize, we have shown that $H^4(T(\Fp),\Z)$
	has an invariant element of order $\frac{p-1}{e}$.
	Therefore $H^4(G(\Fp),\Z)$ has an element of order
	$\frac{p-1}{d_1 \cdots d_r \cdot e}$.
	Since $p \equiv 1 \bmod (d_1 \cdots d_r \cdot e \cdot n)$ it follows that $H^4(G(\Fp),\Z)$ has an element of order $n$.
\end{proof}

\section{Proof of \autoref{reform3}}

\subsection{The groups $\bar H^2(\Zl)$.}

As before, we let $L$ be an open subgroup of the
arithmetic completion $\hGQ$, and we shall now fix a prime number $l$.
We introduce a new module
\[
	\cCLZl
	=
	\{ \text{continuous functions }
	f : L \to \Zl \}.
\]
Again, we regard the group $\cCLZl$
 as a $\GammaL \times L$-module.
We have
\[
	\cCLZl
	=
	\limp{t} \cCLZlt.
\]
We define, analogously to the notation $\bar H^\bullet(\Zlt)$,
\begin{equation}
	\label{equation H bar Zl defn}
	\bar H^\bullet (\Zl)
	=
	H^\bullet(\GammaL,\cCLZl ).
\end{equation}
The main focus of this section is to
determine the group $\bar H^2(\Zl)^\hGQ$.
We begin by establishing some easy properties
of the modules $\bar H^\bullet(\Zl)$.

\begin{proposition}
	\label{h2barZl properties}
	The cohomology groups $\bar H^\bullet(\Zl)$
	have the following properties:
	\begin{enumerate}
		\item
		The groups $\bar H^\bullet(\Zl)$
		do not depend on the open subgroup $L$ in
		their definition (\autoref{equation H bar Zl defn}).
		\item
		$\bar H^0(\Zl)=\Zl$,
		\item
		$\bar H^1(\Zl)=0$,
		\item
		\(
			\bar H^2(\Zl)
			=
			\limp{t} \bar H^2(\Zlt).
		\)
		\item
		The group $\bar H^2(\Zl)$ is torsion-free
		and contains no non-zero divisible elements.
		\item
		For any open subgroup $L$ of $\hGQ$ we have
		\(
			\bar H^2(\Zl)^L
			=
			\limp{t} \left( \bar H^2(\Zlt)^L \right)
		\).
	\end{enumerate}
\end{proposition}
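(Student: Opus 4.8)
The plan is to treat the six assertions in order, using the $\Z/l^t$-analogues established in Section 4 together with the Mittag--Leffler machinery of Section 3. For (1), I would argue exactly as in \autoref{reform4}: if $K \subset L$ is an open subgroup, then $\cCLZl \cong_{\GammaL} \ind_{\GammaK}^{\GammaL} \cCKZl$ as $\GammaL$-modules (a continuous $\Z_l$-valued function on $L$ restricts to a compatible family on cosets of $K$), and \nameref{shapiro} gives $H^\bullet(\GammaL,\cCLZl) = H^\bullet(\GammaK,\cCKZl)$; so we may assume $L$ compact open throughout. For (2) and (3): since $\cCLZl = \limp{t}\cCLZlt$ and (for $L$ compact) each $\cCLZlt$ is discrete, the projective system of cochain complexes $C^\bullet(\GammaL,\cCLZlt)$ has surjective transition maps, hence satisfies Mittag--Leffler, and \autoref{limses} applies. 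In degree $0$ we get $\bar H^0(\Zl) = \limp{t} \bar H^0(\Zlt) = \limp{t}\Z/l^t = \Zl$ by \autoref{SmallHmod}; in degree $1$, \autoref{limses} gives an exact sequence with $\limp{t}\bar H^1(\Zlt) = 0$ and $\limpd{t}\bar H^0(\Zlt) = \limpd{t}\Z/l^t = 0$ (the system $\Z/l^t$ has surjective transitions, hence Mittag--Leffler), so $\bar H^1(\Zl) = 0$.

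For (4), I would again invoke \autoref{limses} in degree $2$: it yields the short exact sequence
\[
	0 \to \limpd{t} \bar H^1(\Zlt) \to \bar H^2(\Zl) \to \limp{t}\bar H^2(\Zlt) \to 0,
\]
and the left-hand term vanishes because $\bar H^1(\Zlt) = 0$ for all $t$ by \autoref{SmallHmod}. Hence $\bar H^2(\Zl) = \limp{t}\bar H^2(\Zlt)$. For (5): each $\bar H^2(\Zlt)$ is an abelian $l$-group, and a projective limit of such groups is torsion-free precisely when it has no elements killed by a fixed $l^t$; an element $(x_t)_t$ with $l x = 0$ would have each $x_t$ killed by $l$, but compatibility $x_{t+1}\mapsto x_t$ together with the fact that the reduction $\bar H^2(\Z/l^{t+1}) \to \bar H^2(\Z/l^t)$ lowers the $l$-torsion structure forces $(x_t)$ to be $0$ — more carefully, I would phrase this via the inverse-limit description: $\bar H^2(\Zl) \subset \prod_t \bar H^2(\Zlt)$ and a nonzero torsion or divisible element would, under the Bockstein/reduction maps relating consecutive levels, contradict finiteness of the relevant truncations. (I expect this to be the step needing the most care; one clean route is to note $\bar H^2(\Zl)^L$, and more generally $\bar H^2(\Zl)$, embeds via the spectral sequence of \autoref{proposition spectral sequence} into a continuous cohomology group $H^3_\cts(L,\Zl)$ that is itself a finitely generated $\Zl$-module in the relevant cases, whence torsion-freeness and the absence of divisible elements are automatic; alternatively argue directly that $\bar H^2(\Zl)/l^t \hookrightarrow \bar H^2(\Z/l^t)$ using $\bar H^1(\Z/l^t)=0$, so any divisible element lies in $l^t\bar H^2(\Zl)$ for all $t$, and then use that $\bar H^2(\Zl)$ is $l$-adically separated because it is an inverse limit of finite $l$-groups along surjections — hence a closed subgroup of a product of finite groups is $l$-adically separated once each quotient by $l^t$ is finite.)

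For (6), the key point is that taking $L'$-invariants (for $L'$ an open subgroup of $\hGQ$ contained in the $L$ of the definition, or indeed any open subgroup since by (1) the module does not depend on $L$) commutes with the projective limit over $t$. Concretely, $\bar H^2(\Zl) = \limp{t}\bar H^2(\Zlt)$ as $\hGQ$-modules with continuous action, and for a fixed open subgroup $L'$ the invariants functor $(-)^{L'}$ is left exact, so it commutes with inverse limits: $\left(\limp{t}\bar H^2(\Zlt)\right)^{L'} = \limp{t}\left(\bar H^2(\Zlt)^{L'}\right)$, because an element of the product lies in the $L'$-invariants of the limit iff each of its components is $L'$-invariant and the components are compatible — exactly the defining condition for an element of $\limp{t}\bigl(\bar H^2(\Zlt)^{L'}\bigr)$. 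The main obstacle in the whole proposition is assertion (5); everything else is a routine application of Shapiro's lemma, \autoref{SmallHmod}, and \autoref{limses}.
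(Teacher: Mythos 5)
Your treatment of parts (1)--(4) and (6) is correct and follows essentially the same route as the paper: Shapiro's lemma for (1), \autoref{limses} together with \autoref{SmallHmod} and the Mittag--Leffler property of the (surjective) systems of cochain complexes for (2)--(4) (the paper obtains (2) even more directly from the density of $\GammaL$ in $L$, but your limit argument is equally valid), and left-exactness of $(-)^{L}$ for (6).

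Part (5) is the one place where your justifications go astray, even though the correct mechanism appears in your ``alternatively argue directly'' clause. Two of the routes you float are not viable: the spectral sequence of \autoref{proposition spectral sequence} does not embed $\bar H^2(\Zl)$ (or $\bar H^2(\Zn)^L$) into $H^3_\cts(L,\cdot)$ --- only the quotient by the image of $H^2(\GammaL,\cdot)$ maps there --- and $\bar H^2(\Zl)$ is emphatically \emph{not} an inverse limit of \emph{finite} $l$-groups: by \autoref{reform2} each $\bar H^2(\Zlt)$ contains infinitely many elements of order $l^t$, so ``finiteness of the relevant truncations'' fails and your separatedness claim as stated has no proof. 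The paper's argument, which is the clean form of your direct route, is this: the coefficient sequence $0 \to \cCLZl \stackrel{\times l^t}{\to} \cCLZl \to \cCLZlt \to 0$ gives a long exact sequence in which $\bar H^1(\Zlt)=0$ forces $\times l^t$ to be injective on $\bar H^2(\Zl)$, whence torsion-freeness; and if $\sigma$ is divisible, then its image in each $\bar H^2(\Zlt)$ is a divisible element of a $\Z/l^t$-module, hence zero, so $\sigma=0$ by the identification in (4). No appeal to finiteness or to $H^3_\cts$ is needed; the vanishing comes solely from $l^t\cdot\bar H^2(\Zlt)=0$ combined with part (4).
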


\begin{proof}
	\begin{enumerate}
	\item
	Suppose $M$ is an open subgroup of $L$.
	Then we have an isomorphism of $\GammaL$-modules $\cCLZl = \ind_M^L \scC(M,\Zl)$.
	The result follows from this by \nameref{shapiro}.
	\item
	Since $\GammaL$ is dense in $L$, it follows that the $\GammaL$-invariant continuous functions on $L$ are  constant.
	This shows that $\bar H^0(\Zl)=\Zl$.
	\item[(3,4)]
	For each $r>0$ we have by \autoref{limses} a short exact sequence
	\[
		0 \to \left(\limp{t}\right)^1 H^{r-1}(\Gamma(L),\cCLZlt)
		\to \bar H^{r}(\Zl)
		\to \limp{t} H^{r}(\Gamma(L),\cCLZlt)
		\to 0.
	\]
	In the notation of the previous section, we have
	\[
		0 \to \left(\limp{t}\right)^1 \bar H^{r-1}(\Zlt)
		\to \bar H^{r}(\Zl)
		\to \limp{t} \bar H^{r}(\Zlt)
		\to 0.
	\]
	By \autoref{SmallHmod}, $\bar H^0(\Zlt)=\Zlt$ and $\bar H^1(\Zlt)=0$.
	Both of these projective systems consist of finite groups, so satisfy the Mittag--Leffler condition.
	Therefore $\left(\limp{t}\right)^1$ vanishes on both of them.
	As a result of this we have for $r=1,2$:
	\[
		\bar H^{r}(\Zl) = \limp{t} \bar H^{r}(\Zlt).
	\]
	In particular $\bar H^1(\Zl)=0$.
	\item[(5)]
	Consider the short exact sequence of modules:
	\[
		0 \to \cCLZl \stackrel{\times l^t}{\to} \cCLZl \to \cCLZlt \to 0.
	\]
	This gives the exact sequence in cohomology
	\[
		\cdots \to \bar H^1(\Zlt) \to \bar H^2(\Zl)
		\stackrel{\times l^t}{\to} \bar H^2(\Zl)
		\to \cdots.
	\]
	We already saw in \autoref{SmallHmod}
	that $\bar H^1(\Zlt)=0$.
	This shows that $\bar H^2(\Zl)$ is torsion-free.
	Suppose $\sigma$ is a divisible element in $\bar H^2(\Zl)$.
	Then the image of $\sigma$ in $\bar H^2(\Zlt)$ is a divisible element for each $t$.
	Since $\bar H^2(\Zlt)$ is a $\Zlt$-module,
	the image of $\sigma$ in $\bar H^2(\Zlt)$ must be zero.
	By (4) it follows that $\sigma=0$.
	\item[(6)]
	This follows because the functor
	$\limp{t}$ commutes with the functor $-^L$ of
	$L$-invariants.
	\end{enumerate}
\end{proof}

\begin{proposition}
	\label{proposition l-adic sequence}
	For any $S$-arithmetic level $L \subset \hGQ$,
	there is an exact sequence as follows:
	\[
		0 \to H^2_\cts(L,\Zl) \to
		H^2(\GammaL,\Zl) \to
		\bar H^2(\Zl)^L \to  
		H^3_\cts(L,\Zl) \to
		H^3(\GammaL,\Zl).
	\]
\end{proposition}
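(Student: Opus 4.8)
The plan is to run the same machine that produced the exact sequence \eqref{low terms finite}, but now with $\Zl$-coefficients, and to upgrade it to include the left-hand terms. Recall from \autoref{h2barZl properties}(1)--(4) that $\bar H^\bullet(\Zl) = H^\bullet(\GammaL,\cCLZl)$ is independent of $L$, with $\bar H^0(\Zl)=\Zl$ and $\bar H^1(\Zl)=0$. The key input will be an $l$-adic analogue of \autoref{cts-cohom}: namely $H^0_\cts(L,\cCLZl)=\Zl$ and $H^s_\cts(L,\cCLZl)=0$ for $s>0$. This is proved exactly as before, since as a continuous $L$-module $\cCLZl = \ind_1^L(\Zl)$ and \nameref{shapiro} applies (the coefficient group $\Zl$ is Polonais, and the relevant cohomology groups are Hausdorff because the module $\cCLZl$ is, after restricting to a compact open subgroup, a projective limit of the discrete modules $\cCLZlt$). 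One must check the Hausdorffness hypothesis needed to run the Hochschild--Serre spectral sequence for $\GammaL \times L$ acting on $\cCLZl$; here I would invoke \autoref{limses} together with the fact (from \autoref{h2barZl properties}) that the groups $\bar H^r(\Zl)$ arise as extensions of a projective limit of finite groups by a $\left(\limp{t}\right)^1$ term of finite groups, hence are Hausdorff, and similarly the continuous cohomology $H^\bullet_\cts(L,\cCLZl)$ is Hausdorff by \autoref{cts-cohom} applied level-by-level and \autoref{limses}.

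With both spectral sequences converging to $H^{r+s}_\cts(\GammaL\times L,\cCLZl)$, the one with $E_2^{r,s}=H^r(\GammaL, H^s_\cts(L,\cCLZl))$ collapses by the $l$-adic analogue of \autoref{cts-cohom}, giving $H^\bullet_\cts(\GammaL\times L,\cCLZl) = H^\bullet(\GammaL,\Zl)$. The other spectral sequence then reads $E_2^{r,s}=H^r_\cts(L,\bar H^s(\Zl))$, converging to $H^{r+s}(\GammaL,\Zl)$. Since $\bar H^0(\Zl)=\Zl$ and $\bar H^1(\Zl)=0$, the bottom two rows of the $E_2$ sheet are $E_2^{r,0}=H^r_\cts(L,\Zl)$ and $E_2^{r,1}=0$, exactly as in the $\Zn$ case. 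The seven-term exact sequence of low-degree terms for this spectral sequence (which is available because the $s=1$ row vanishes, so no $d_2$ or $d_3$ differential interferes up through total degree $3$) then gives precisely
\[
	0 \to H^2_\cts(L,\Zl) \to H^2(\GammaL,\Zl) \to \bar H^2(\Zl)^L \to H^3_\cts(L,\Zl) \to H^3(\GammaL,\Zl),
\]
where the first term is $E_2^{2,0}=E_\infty^{2,0}$ sitting inside $H^2(\GammaL,\Zl)$, the quotient is $E_\infty^{0,2}=E_4^{0,2}=\ker(d_3\colon \bar H^2(\Zl)^L \to H^3_\cts(L,\Zl))$, and exactness at $\bar H^2(\Zl)^L$ and at $H^3_\cts(L,\Zl)$ records the edge map $d_3$ together with the fact that $E_\infty^{3,0}$ injects into $H^3(\GammaL,\Zl)$. (Here I use $\bar H^2(\Zl)^L = H^0_\cts(L,\bar H^2(\Zl)) = E_2^{0,2}$ and that $E_2^{0,2}=E_3^{0,2}$ since $E_2^{-2,3}=0$.)

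The main obstacle is the analytic bookkeeping around the spectral sequences: one must be sure that the Hochschild--Serre machinery of the excerpt applies with the non-discrete, non-smooth coefficient module $\cCLZl$, i.e. that the relevant continuous cohomology groups are Hausdorff so that the spectral sequence of Moore is valid. I expect this to follow formally from \autoref{limses} applied to the inverse system of cochain complexes computing $H^\bullet(\GammaL,\cCLZlt)$ and $H^\bullet_\cts(L,\cCLZlt)$ — these satisfy Mittag--Leffler since the modules $\cCLZlt$ are finite at each finite level after restricting along a cofinal system of compact opens — combined with \autoref{reform4}, \autoref{cts-cohom}, and \autoref{h2barZl properties}. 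Everything else is a direct transcription of the $\Zn$-argument in \S\ref{sect:5.1} with $\Zn$ replaced by $\Zl$, reading off the five-term sequence from the $E_2$ sheet whose only nonzero rows in low degree are $s=0$ and $s\ge 2$.
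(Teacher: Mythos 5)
There is a genuine gap, and it is exactly the one the paper's own remark after the proposition warns about. Your strategy is to rerun the Hochschild--Serre machinery of \autoref{proposition spectral sequence} directly with the coefficient module $\cCLZl$. But the cited spectral sequence of Moore requires \emph{all} of the groups $H^s(\GammaL,\cCLZl)=\bar H^s(\Zl)$ to be Hausdorff, and your justification of this fails for $s\ge 3$. You argue that each $\bar H^s(\Zl)$ sits in an extension of $\limp{t}\bar H^s(\Zlt)$ by a $\limpd{t}\bar H^{s-1}(\Zlt)$ term ``of finite groups''. The groups $\bar H^{s-1}(\Zlt)$ are finite only for $s-1\le 1$: already $\bar H^{2}(\Zlt)=\limd{\Upsilon}H^2(\Upsilon,\Zlt)$ is a direct limit over all arithmetic subgroups and is infinite (\autoref{reform2} shows it contains infinitely many elements of order $l^t$). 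So Mittag--Leffler is not available, the term $\limpd{t}\bar H^{2}(\Zlt)$ need not vanish, and when it does not vanish the natural topology on $\bar H^3(\Zl)$ is non-Hausdorff (a nonzero $\limpd{}$ term is precisely the closure of zero in the inverse-limit topology on the cocycles). The author explicitly states that the groups $\bar H^r(\Zl)$ are ``probably not Hausdorff for $r\ge 3$'' and for this reason does \emph{not} use your spectral sequence. To salvage your route you would have to truncate the coefficient complex at degree $2$ and construct a three-row spectral sequence by hand (the remark mentions this as a possibility), but that work is not done in your proposal; as written, the key hypothesis is asserted on the basis of an incorrect finiteness claim.

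The paper's actual proof avoids the issue entirely: it takes the finite-coefficient five-term sequence
\[
0 \to H^2_\cts(L,\Zlt) \to H^2(\GammaL,\Zlt) \to \bar H^2(\Zlt)^L \to H^3_\cts(L,\Zlt) \to H^3(\GammaL,\Zlt)
\]
for each $t$ (which is legitimate, since the finite-coefficient spectral sequence was already established), splits it into three short exact sequences with intermediate images $A_t$ and $B_t$, uses the finiteness of $H^\bullet(\GammaL,\Zlt)$ for the $S$-arithmetic group $\GammaL$ to kill the relevant $\limpd{t}$ terms, and then splices the resulting limits back together, identifying $\limp{t}$ of each term with the corresponding $\Zl$-cohomology group (using part (6) of \autoref{h2barZl properties} for the middle term). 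You should adopt this projective-limit argument, or else supply the truncation argument your approach actually requires.
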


\begin{remark}
	It is tempting to suggest that the exact sequence of the proposition follows from a spectral sequence
	of the form $H^r_\cts(L,\bar H^s(\Zl)) \implies H^{r+s}(\GammaL,\Zl)$, which would be proved in the same way as in the finite coefficient case (\autoref{proposition spectral sequence}).
	Unfortunately this is not quite so simple.
	The problem is that the groups $\bar H^r(\Zl)$ will probably not be Hausdorff for $r \ge 3$, and so there is no off-the-shelf spectral sequence for us to use.
	Admittedly we could truncate at $\bar H^2(\Zl)$ to obtain a spectral sequence with three rows, or we could try to work with the more general spectral sequence constructed in \cite{flach}. Instead we've gone for a more elementary approach, and we prove the exact sequence of the proposition by taking the projective limit of such exact sequences in the finite coefficient cases.
\end{remark}

\begin{proof}	
	For any $t\ge 0$ the spectral sequence in \autoref{equation E3 sheet} gives rise to an exact sequence:
	\[
		0 \to H^2_\cts(L,\Zlt) \to
		H^2(\GammaL,\Zlt) \to
		\bar H^2(\Zlt)^L \to
		H^3_\cts(L,\Zlt) \to
		H^3(\GammaL,\Zlt).
	\]
	We shall write $A_t$ for the image of the map $H^2(\GammaL,\Zlt) \to \bar H^2(\Zlt)^L$
	and $B_t$ for the image the map
	$\bar H^2(\Zlt)^L \to	 H^3_\cts(L,\Zlt)$.
	We therefore have three exact sequences:
	\begin{eqnarray}
		\label{eqn exact 1}
		&0 \to  H^2_\cts(L,\Zlt) \to
		H^2(\GammaL,\Zlt) \to A_t \to 0,\\
		&0 \to A_t \to
		\bar H^2(\Zlt)^L \to B_t \to 0, \\
		&0 \to B_t \to
		H^3_\cts(L,\Zlt) \to
		H^3(\GammaL,\Zlt).
	\end{eqnarray}
	As $\GammaL$ is an $S$-arithmetic group,
	the cohomology groups $H^\bullet(\GammaL,\Zlt)$
	are all finite.
	From the \autoref{eqn exact 1}
	it follows that $H^2_\cts(L,\Zlt)$ and $A_t$ are both finite, and hence
	\[
		\limpd{t} H^2_\cts(L,\Zlt) = 0,
		\qquad
		\limpd{t} A_t = 0.
	\]
	From this it follows that we have exact sequences
	\begin{eqnarray*}
		&0 \to \limp{t} H^2_\cts(L,\Zlt) \to
		\limp{t} H^2(\GammaL,\Zlt) \to \limp{t} A_t \to 0,\\
		&0 \to \limp{t} A_t \to
		\bar H^2(\Zl)^L \to  \limp{t} B_t \to 0, \\
		&0 \to \limp{t} B_t \to
		\limp{t} H^3_\cts(L,\Zlt) \to
		\limp{t} H^3(\GammaL,\Zlt).
	\end{eqnarray*}	
	In the second of these we have used part (6) of \autoref{h2barZl properties}.
	Splicing the exact sequences together again, we obtain the following exact sequence:
	\begin{multline*}
		0 \to \limp{t} H^2_\cts(L,\Zlt) \to
		\limp{t} H^2(\GammaL,\Zlt) \to
		\bar H^2(\Zl)^L   \\
		\to \limp{t} H^3_\cts(L,\Zlt) \to
		\limp{t} H^3(\GammaL,\Zlt).
	\end{multline*}
	Recall again that as $\GammaL$ is an $S$-arithmetic group, the groups $H^\bullet(\GammaL,\Zlt)$ must be finite.
	Furthermore, the spectral sequence in \autoref{equation E3 sheet} shows that the groups $H^1_\cts(L,\Zlt)$ and $H^2_\cts(L,\Zlt)$ are also finite.
	As a result, all of these projective systems satisfy the Mittag--Leffler condition, so we have:
	\begin{align*}
		\limpd{t} H^1_\cts(L,\Zlt),  \quad
		\limpd{t} H^1(\GammaL,\Zlt),  \\
		\limpd{t} H^2_\cts(L,\Zlt),  \quad
		\limpd{t} H^2(\GammaL,\Zlt).
	\end{align*}
	As a result of this, we have
	\begin{align*}
		\limp{t} H^2_\cts(L,\Zlt)& = H^2_\cts(L,\Zl), \\
		\limp{t} H^2(\GammaL,\Zlt)& = H^2(\GammaL,\Zl), \\
		\limp{t} H^3_\cts(L,\Zlt)& = H^3_\cts(L,\Zl), \\
		\limp{t} H^3(\GammaL,\Zlt)& = H^3(\GammaL,\Zl).
	\end{align*}
	Substituting these into our previous exact sequence we get
	\[
		0 \to H^2_\cts(L,\Zl) \to
		H^2(\GammaL,\Zl) \to
		\bar H^2(\Zl)^L \to  
		H^3_\cts(L,\Zl) \to
		H^3(\GammaL,\Zl).
	\]
\end{proof}

\subsection{The groups $H^\bullet_\cts(L,\Zl)$.}
We shall next concentrate on the continuous
cohomology groups in the exact sequence of
\autoref{proposition l-adic sequence}.

From now on, we assume that $L$ is an $S$-arithmetic level for some finite set of primes $S$.
Recall that this means $L$ is the pre-image in
$\hGQ$ of an open subgroup of $G(\Af)$
 of the form
\[
	\prod_{p\in S} G(\Qp) \times \prod_{p\not\in S} K_p,
	\qquad
	\text{where $K_p$ is compact and open in $G(\Q_p)$.}
\]
It will be convenient to call a prime number $p$ a \emph{tame prime} if it satisfies all of the following conditions:
\begin{enumerate}
	\item
	$p \not\in S$.
	\item
	$p \ne l$.
	\item
	$G$ is unramified over $\Qp$.
	\item
	$K_p$ is a maximal hyperspecial compact open subgroup of $G(\Qp)$.
	This implies that if we let $K_p^0$ be the maximal pro-$p$ normal subgroup of $K_p$, then the group
	$G(\Fp)=K_p/K_p^0$ is a product of some of the simply connected finite groups of Lie type described in detail in \cite{steinberg}.
	\item
	$H^r(G(\Fp),\Q/\Z)=0$ for $r=1,2$.
	We recall from \cite{steinberg} that this condition is satisfied for all but finitely many of the groups $G(\Fp)$.
\end{enumerate}
We note that for tame primes $p$ we have
$H^\bullet_\cts(K_p,\Zlt)= H^\bullet(G(\Fp),\Zlt)$
by condition (2).

All but finitely many of the prime numbers are tame.
We shall write $W$ for the set of primes not in $S$ which are not tame.
The group $L/\Cong(G)$ decomposes in the form
\[
	L/\Cong(G) = L_S \times K_W \times \Ktame,
\]
where we are using the notation:
\[
	L_S = \prod_{p\in S} G(\Qp),
	\quad
	K_W = \prod_{p\in W} K_p,
	\quad
	\Ktame = \prod_{p \text{ tame}} K_p.
\]

\begin{lemma}
	\label{lemma tame cohomology}
	With the notation introduced above, we have
	$H^1_\cts(\Ktame, \Zl)=0$,
	$H^2_\cts(\Ktame, \Zl)=0$ and
	$H^3_\cts(\Ktame, \Zl)=0$.
\end{lemma}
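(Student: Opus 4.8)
The plan is to compute $H^\bullet_\cts(\Ktame,\Zl)$ by reducing first to finite coefficients and then to the cohomology of the finite groups $G(\Fp)$, where the essential input is condition (5) in the definition of a tame prime. To pass from $\Zl$ to $\Zlt$ I would use that $\Ktame=\prod_{p\text{ tame}}K_p$ is profinite and apply \autoref{limses} to the projective system of continuous cochain complexes $C^\bullet_\cts(\Ktame,\Zlt)$: the transition maps (induced by the reductions $\Z/l^{t+1}\to\Zlt$) are surjective, since a continuous $\Zlt$-valued cochain on $(\Ktame)^\bullet$ factors through a finite quotient and hence lifts, so the system is Mittag--Leffler, and $\limp{t}C^\bullet_\cts(\Ktame,\Zlt)=C^\bullet_\cts(\Ktame,\Zl)$. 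This gives, for each $r$, a short exact sequence
\[
	0 \to \limpd{t} H^{r-1}_\cts(\Ktame,\Zlt) \to H^r_\cts(\Ktame,\Zl) \to \limp{t} H^r_\cts(\Ktame,\Zlt) \to 0 ,
\]
so it is enough to understand $H^r_\cts(\Ktame,\Zlt)$ for $r\le 3$.

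Next I would compute those finite-coefficient groups. Since $\Zlt$ is finite, $H^\bullet_\cts(\Ktame,\Zlt)=\limd{U}H^\bullet_\cts(\prod_{p\in U}K_p,\Zlt)$, the colimit over finite sets $U$ of tame primes (Proposition 8, section 2.2 of \cite{serre-galoiscohomology}). For fixed $U$ the Hochschild--Serre spectral sequence for the closed normal subgroup $\prod_{p\in U}K_p^0$ collapses onto its bottom row: this subgroup is profinite of order prime to $l$ (only the primes of $U$ occur, and $l\notin U$), hence has no higher cohomology with $\Zlt$-coefficients, so $H^\bullet_\cts(\prod_{p\in U}K_p,\Zlt)=H^\bullet(\prod_{p\in U}G(\Fp),\Zlt)$. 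Condition (5), via the Bockstein sequence of $0\to\Z\to\Q\to\Q/\Z\to 0$ and the vanishing of positive-degree $\Q$-cohomology of finite groups, gives $H^r(G(\Fp),\Z)=0$ for $r=1,2,3$; the K\"unneth formula over $\Z$ propagates this to $H^r(\prod_{p\in U}G(\Fp),\Z)=0$ for $r=1,2,3$ together with $H^4(\prod_{p\in U}G(\Fp),\Z)=\bigoplus_{p\in U}H^4(G(\Fp),\Z)$; and feeding this through the Bockstein sequence of $0\to\Z\xrightarrow{l^t}\Z\to\Zlt\to 0$ yields $H^r(\prod_{p\in U}G(\Fp),\Zlt)=0$ for $r=1,2$ and $H^3(\prod_{p\in U}G(\Fp),\Zlt)=\bigoplus_{p\in U}H^4(G(\Fp),\Z)[l^t]$. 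Passing to the colimit over $U$:
\[
	H^1_\cts(\Ktame,\Zlt)=H^2_\cts(\Ktame,\Zlt)=0, \qquad H^3_\cts(\Ktame,\Zlt)=\bigoplus_{p\text{ tame}}H^4(G(\Fp),\Z)[l^t] .
\]

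Then I would conclude. The vanishing of $H^1_\cts(\Ktame,\Zlt)$, $H^2_\cts(\Ktame,\Zlt)$ and the fact that $\limpd{t}\Zlt=0$ (a system of finite groups) make the exact sequence above give $H^1_\cts(\Ktame,\Zl)=H^2_\cts(\Ktame,\Zl)=0$ at once, together with $H^3_\cts(\Ktame,\Zl)=\limp{t}\bigoplus_{p\text{ tame}}H^4(G(\Fp),\Z)[l^t]$. Naturality of the Bockstein identifies the transition map induced by $\Z/l^{t+1}\to\Zlt$ with multiplication by $l$ on each summand; since $H^4(G(\Fp),\Z)$ is a finite group, $\limp{t}(H^4(G(\Fp),\Z)[l^t],\times l)$ is the $l$-adic Tate module of a finite group, hence zero, and an element of $\limp{t}\bigoplus_{p}(\,\cdot\,)$ that projects to $0$ in every summand is $0$. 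Therefore $H^3_\cts(\Ktame,\Zl)=0$.

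The hard part will be the degree-$3$ case. The individual local groups $H^3_\cts(K_p,\Zl)=H^3(G(\Fp),\Zl)$ are in general nonzero, so the vanishing of $H^3_\cts(\Ktame,\Zl)$ cannot be seen factor by factor: one has to organize the mod-$l^t$ cohomology into a direct sum of finite abelian groups on which the transition maps of the projective system act by multiplication by $l$, so that the inverse limit is annihilated, and the one genuinely technical point is checking that these transition maps really are multiplication by $l$ (via the comparison of the two Bockstein sequences used above). The degrees $1$ and $2$, by contrast, reduce directly to the hypothesis $H^1(G(\Fp),\Q/\Z)=H^2(G(\Fp),\Q/\Z)=0$ that is built into the notion of a tame prime.
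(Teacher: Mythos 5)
Your proof is correct and follows essentially the same route as the paper's: reduce to finite coefficients via the short exact sequences from \autoref{limses}, pass to the colimit over finite sets $U$ of tame primes, identify $H^\bullet_\cts(\prod_{p\in U}K_p,\Zlt)$ with $H^\bullet(\prod_{p\in U}G(\Fp),\Zlt)$, dispose of degrees $1$ and $2$ using condition (5), and in degree $3$ embed the projective limit of the direct sums into the product of the per-prime projective limits. The only difference is cosmetic and occurs at the last step: the paper shows each per-prime limit vanishes via $\varprojlim_t H^3(G(\Fp),\Zlt)=H^3(G(\Fp),\Zl)\cong H^2(G(\Fp),\Ql/\Zl)=0$, whereas you identify $H^3(G(\Fp),\Zlt)$ with $H^4(G(\Fp),\Z)[l^t]$, check that the transition maps become multiplication by $l$, and observe that the resulting limit is the $l$-adic Tate module of a finite group, hence zero.
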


\begin{proof}
	For each $r>0$ we have by \autoref{limses} a short exact sequence
	\[
		0 \to
		\limpd{t} H_\cts^{r-1}(\Ktame, \Zlt)
		\to H^{r}_\cts(\Ktame, \Zl) \to
		\limp{t} H^{r}_\cts(\Ktame, \Zlt)
		\to 0.
	\]
	Furthermore, for any $r$ we have
	\[
		H^{r}_\cts(\Ktame, \Zlt)
		=
		\limd{U} H^r_\cts\left( \prod_{p \in U} K_p, \Zlt\right),
	\]
	where $U$ runs through the finite sets of tame primes.
	For such primes $p$ we have
	$H^r_\cts(K_p,\Q_l/\Zl)=0$ for $r=1,2$.
	Hence by an obvious long exact sequence we have
	$H^r_\cts(K_p,\Zlt)=0$ for $r=1,2$.
	By the K\"unneth formula we have
	\[
		H^r_\cts\left( \prod_{p \in U} K_p, \Zlt\right) = 0
		\quad
		\text{ for $r=1,2$.}
	\]
	Hence $H^r_\cts(\Ktame,\Zlt)=0$ for $r=1,2$.
	Since the projective system $H^0_\cts(\Ktame,\Zlt)=\Zlt$ satisfies the Mittag--Leffler condition, we have
	$H^r_\cts(\Ktame,\Zl)=0$ for $r=1,2$.

	We'll now concentrate on the group $H^3_\cts(\Ktame,\Zl)$.
	By the short exact sequence above, together with the fact that $H^2_\cts(\Ktame,\Zlt)=0$,
	we have
	\[
		H^3_\cts(\Ktame,\Zl) = \limp{t} H^3_\cts(\Ktame,\Zlt).
	\]
	We also have (using the K\"unneth formula
	and the fact that $H^\bullet(K_p,\Zlt)=H^\bullet(G(\Fp),\Zlt)$):
	\begin{equation}
		\label{non-mittag-leffler}
		H^3_\cts(\Ktame, \Zl) =
		\limp{t} \left( \bigoplus_{p \text{ tame}} H^3(G(\Fp),\Zlt) \right)
		\subseteq
		\prod_{p \text{ tame}}
		\left( \limp{t} H^3(G(\Fp),\Zlt) \right).
	\end{equation}
	Consider any tame prime number $p$.
	Since $G(\Fp)$ is finite, we have $H^3(G(\Fp),\Ql)=0$, and therefore
	$H^3(G(\Fp),\Zl)=H^2(G(\Fp),\Ql/\Zl)=0$.
	From this it follows that
	\[
		\limp{t} H^3(G(\Fp),\Zlt) =0.
	\]
	By \autoref{non-mittag-leffler} we have $H^3_\cts(\Ktame, \Zl)=0$.
\end{proof}

	(It might be tempting to imagine that the result above can be extended further in a simple way.
	However, we note that the projective system in
	\autoref{non-mittag-leffler} does not satisfy the Mittag--Leffler condition, so we do not expect  $H^4_\cts(\Ktame,\Zl)$ to be finitely generated as a $\Zl$-module).

\begin{lemma}
	\label{lemma L-cohomology}
	Let $L$ be an $S$-arithmetic level in $\hGQ$.
	For $r=0,1,2,3$
	we have
	\[
		H^r_\cts(L/\Cong(G), \Zl)= H^r_\cts(L_S \times K_W,\Zl),
	\]
	\[
		H^r_\cts(L, \Ql) = H^r_\cts(G(\Ql),\Ql) = H^r_\Lie(\gg \otimes \Q_l,\Ql).
	\]
	Here $\gg$ is the Lie algebra of $G$ over $\Q$.
\end{lemma}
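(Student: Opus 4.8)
The plan is to treat the two displayed identities in turn: in each case I strip off the factors of the product decomposition $L/\Cong(G)=L_S\times K_W\times\Ktame$ that contribute nothing in the relevant range of degrees, and reduce to a single Lie-theoretic factor. For the $\Zl$-statement I would apply the K\"unneth formula for continuous cochains of a direct product together with \autoref{lemma tame cohomology}: since $H^0_\cts(\Ktame,\Zl)=\Zl$ is free and $H^r_\cts(\Ktame,\Zl)=0$ for $r=1,2,3$, every cross term and every $\Tor$-term that could enter $H^r_\cts(L/\Cong(G),\Zl)$ for $r\le 3$ vanishes, leaving exactly $H^r_\cts(L_S\times K_W,\Zl)$. (Equivalently: the projection $L/\Cong(G)\to L_S\times K_W$ is split by the inclusion of that factor, so $H^r_\cts(L_S\times K_W,\Zl)$ is a direct summand, and K\"unneth identifies the complement with a sum of terms built from $H^{\ge 1}_\cts(\Ktame,\Zl)$, which vanish for $r\le 3$.) The only point needing care is the use of the K\"unneth formula with the infinite factor $\Ktame$, but it enters only through cohomology already computed in \autoref{lemma tame cohomology}, so one may as well work with finite coefficients $\Z/l^t$ and pass to the limit, the first derived functors of $\limp{t}$ vanishing because the groups in play are finite.

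For the $\Ql$-statement I would argue in four steps. First, $\Cong(G)$ is finite and $\Ql$ is a $\Q$-vector space, so the Hochschild--Serre spectral sequence gives $H^r_\cts(L,\Ql)=H^r_\cts(L/\Cong(G),\Ql)$. Second, write $L/\Cong(G)=\prod_{p\in S}G(\Qp)\times\prod_{p\not\in S}K_p$ as $M\times N$, where $M$ is the ``$l$-factor'' ($M=G(\Ql)$ if $l\in S$, and $M=K_l$ otherwise) and $N$ collects every other factor. Third, I claim $H^r_\cts(N,\Ql)=0$ for $0<r\le 3$. Indeed, for a prime $p\ne l$ a compact $p$-adic analytic group $K_p$ has an open pro-$p$ subgroup, which has no cohomology with coefficients of $l$-power order in positive degrees, so $H^{>0}_\cts(K_p,\Zl)=H^{>0}_\cts(K_p,\Ql/\Zl)=0$ and hence $H^{>0}_\cts(K_p,\Ql)=0$; for $p\ne l$ the group $G(\Qp)$ acts on its contractible Bruhat--Tits building with compact $p$-adic analytic stabilisers, so the isotropy spectral sequence collapses onto the cohomology of the quotient complex, which for simply connected $G$ is an alcove and hence contractible, giving $H^{>0}_\cts(G(\Qp),\Ql)=0$; and $H^r_\cts(\Ktame,\Ql)=0$ for $r\le 3$ because $H^r_\cts(\Ktame,\Zl)=0$ there, so $H^r_\cts(\Ktame,\Ql)$ embeds in the torsion group $H^r_\cts(\Ktame,\Ql/\Zl)$ while being a $\Ql$-vector space. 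These three vanishing statements combine by K\"unneth to give $H^{>0}_\cts(N,\Ql)=0$ in degrees $\le 3$.

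Fourth, I would identify $H^r_\cts(M,\Ql)$ with $H^r_\Lie(\gg\otimes\Ql,\Ql)$. For a compact open subgroup $K_l$ of $G(\Ql)$ this is Lazard's comparison isomorphism applied to an open uniform pro-$l$ subgroup, together with the fact that the resulting finite quotient acts trivially on $H^\bullet_\Lie(\gg\otimes\Ql,\Ql)$ — that action being the restriction of the adjoint action of the connected group $G$, which is trivial on Lie-algebra cohomology — so that $H^\bullet_\cts(K_l,\Ql)=H^\bullet_\Lie(\gg\otimes\Ql,\Ql)$ independently of the choice of $K_l$. For $M=G(\Ql)$ I would run the building argument again: every parahoric stabiliser has continuous $\Ql$-cohomology $H^\bullet_\Lie(\gg\otimes\Ql,\Ql)$, the restriction maps between stabilisers are the identity under Lazard's isomorphism, so the $E_1$-page is the cochain complex of the (contractible) quotient alcove with constant coefficients $H^\bullet_\Lie(\gg\otimes\Ql,\Ql)$, forcing collapse to $H^r_\cts(G(\Ql),\Ql)=H^r_\Lie(\gg\otimes\Ql,\Ql)$. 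Combining with the third step, $H^r_\cts(L,\Ql)=H^r_\cts(M,\Ql)=H^r_\Lie(\gg\otimes\Ql,\Ql)$ for $r\le 3$, and since this value does not depend on whether the $l$-factor is all of $G(\Ql)$ or a compact open subgroup, it also equals $H^r_\cts(G(\Ql),\Ql)$.

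The main obstacle is the pair of $p$-adic computations in the $\Ql$-statement: $H^{>0}_\cts(G(\Qp),\Ql)=0$ for $p\ne l$, and $H^\bullet_\cts(G(\Ql),\Ql)=H^\bullet_\Lie(\gg\otimes\Ql,\Ql)$. Both rest on the Bruhat--Tits building — concretely, the isotropy spectral sequence for a locally compact totally disconnected group acting on a contractible complex with compact open stabilisers — and on Lazard's isomorphism between the continuous $\Ql$-cohomology of a compact $l$-adic analytic group and the cohomology of its Lie algebra. I would either quote these from the literature or, for a self-contained treatment, obtain the spectral sequence from a $G$-equivariant resolution of $\Ql$ (respectively $\Zl$) by the simplicial chain groups of the building. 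A lesser nuisance, shared by both halves, is keeping the K\"unneth formula honest over infinite products and with the $l$-adic rather than discrete topology on $\Ql$; this is dealt with by reducing to finite coefficients, where the groups involved in the relevant range are finite, so that the Mittag--Leffler condition holds and the first derived functors of $\limp{t}$ vanish, and then passing to the limit.
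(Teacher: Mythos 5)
Your proposal is correct and follows essentially the same route as the paper: strip off $\Ktame$ using \autoref{lemma tame cohomology} (your K\"unneth argument is the same in substance as the paper's Hochschild--Serre spectral sequence for the product), kill the $\Cong(G)$ and $p\ne l$ factors by finiteness/pro-$p$ arguments and the Casselman--Wigner building spectral sequence, and identify the remaining $l$-factor with $H^\bullet_\Lie(\gg\otimes\Ql,\Ql)$ via Lazard (for $K_l$) and Casselman--Wigner (for $G(\Ql)$). The only difference is that you sketch the building argument for $G(\Ql)$ where the paper simply cites it.
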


\begin{proof}
	Recall that we have a decomposition of the group $L/\Cong(G)$ in the form $L_S \times K_W \times \Ktame$. This gives rise to the following spectral sequence
	\[
		H^r_\cts(L_S \times K_T , H^s(\Ktame,\Zl))
		\implies
		H^{r+s}_\cts (L/\Cong(G), \Zl).
	\]
	From \autoref{lemma tame cohomology}, we see that the bottom left corner of the $E_2$-sheet is as follows:
	\[
		\begin{array}{cccc}
			0 & \cdots  \\
			0 & 0 & \cdots  \\
			0 & 0 & 0 & \cdots  \\
			H^0_\cts(L_S \times K_T , \Zl)&H^1_\cts(L_S \times K_T , \Zl)& H^2_\cts(L_S \times K_T , \Zl)& H^3_\cts(L_S \times K_T , \Zl)
		\end{array}
	\]
	This shows that $H^r_\cts(L/\Cong(G), \Zl)= H^r(L_S \times K_T,\Zl)$ for $r\le 3$.
	
	Since $\Cong(G)$ is a finite group we have
	\[
		H^s(\Cong(G),\Ql)
		=\begin{cases}
			\Ql & \text{if $s=0$,}\\
			0 & \text{if $s>0$.}
		\end{cases}
	\]
	Therefore the spectral sequence
	$H^r_\cts(L/\Cong(G),H^s(\Cong(G),\Ql))$
	collapses to the bottom row, so we have
	\[
		H^r_\cts(L,\Ql)
		=
		H^r_\cts(L/\Cong(G),\Ql).
	\]
	In particular for $r \le 3$ we have
	\[
		H^r_\cts(L,\Ql)
		=
		H^r_\cts\left(\prod_{p\in S} G(\Qp) \times \prod_{p\in W} K_p,\Ql\right).
	\]
	We'll calculate these cohomology groups above
	using the K\"unneth formula.
	
	Suppose first that $p$ is a prime in $W$, which is not equal to $l$.
	The group $K_p$ contains a normal pro-$p$ subgroup of finite index.
	From this it follows that
	\[
		H^s_\cts(K_p, \Ql)
		=
		\begin{cases}
			\Ql & s=0, \\
			0 & s > 0.
		\end{cases}
	\]

	Next, suppose that $p$ is a prime in $S$ which is not equal to $l$.
	We recall from \cite{casselman-wigner}
	that there is a spectral sequence which calculates the cohomology of $G(\Qp)$ in terms of the cohomology of its compact open subgroups.
	Let $K_{p}^0$ be a maximal pro-$p$ subgroup of $G(\Qp)$.
	The subgroup $K_{p}^0$
	is compact and open in $G(\Qp)$.
	There are finitely many maximal compact subgroups of $G(\Qp)$, which contain $K_p^0$;
	we call these subgroups $K_{1},\ldots,K_{n}$.
	In the spectral sequence, the $E_{1}$-sheet is given by
	\[
		E_{1}^{r,s}
		=
		\bigoplus_{i_{0}< \ldots < i_{r}} H^{s}_{\cts}(K_{i_{0}, \ldots,i_{s}},\Ql).
	\]
	Here we are using the notation
	\[
		K_{i_{1},\ldots,i_{s}}
		=
		K_{i_{1}} \cap \ldots \cap K_{i_{s}}.
	\]
	The map $E_{1}^{r-1,s} \to E_{1}^{r,s}$ is an alternating sum of restriction maps; in other
	words, its $(i_{0},\ldots,i_{r})$-component is equal to
	\[
		\sum_{j=0}^{r}
		(-1)^{j}
		\Rest_{K_{i_{0},\ldots,i_{r}}}^{K_{i_{0},\ldots,\hat i_{j},\ldots,i_{r}}}
		\left(\sigma_{i_{0}, \ldots,\hat i_{j}, \ldots, i_{r}}\right).
	\]
	As we are assuming here that $p \ne l$, the
	cohomology groups $H^{s}_{\cts}(K_{i_{0}, \ldots,i_{s}},\Ql)$ are zero for $s>0$.
	Therefore the spectral sequence consists of a single row in $E_1$; this row is the simplicial cochain complex of a simplex with $n$ vertices.
	As this simplex is contractable, we have
	\[
		H^s_\cts(G(\Qp), \Ql)
		=
		\begin{cases}
			\Ql & s=0, \\
			0 & s > 0.
		\end{cases}
	\]	
	From the K\"unneth formula we have for $r\le 3$:
	\[
		H^r_\cts(L,\Ql)
		=
		H^r_\cts(L_l,\Ql),
	\]
	where $L_l$ is either $K_l$ or $G(\Ql)$, depending on whether the prime $l$ is in $S$ or not.
	In either case we have $H^\bullet_\cts(L_l,\Ql)= H^\bullet(\gg\otimes \Ql,\Ql)$; this is proved in \cite{lazard} for $K_l$ and in \cite{casselman-wigner} for $G(\Ql)$.
	As a result of this we have
	$H^r_\cts(L,\Ql)=H^r(\gg\otimes \Ql,\Ql)$ for $r \le 3$.
\end{proof}

\begin{lemma}
	\label{lemma gg-cohomology}
	We have $H^0(\gg \otimes\Ql,\Ql)=\Ql$, $H^1(\gg\otimes\Ql,\Ql)=0$, $H^2(\gg\otimes\Ql,\Ql)=0$ and $H^3(\gg\otimes\Ql,\Ql)=\Ql^b$, where $b$ is the number of
	simple components of $G \times_\Q \C$.
	(Note that in the notation of the introduction we have $b=b_\R+2b_\C$).
\end{lemma}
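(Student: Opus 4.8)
The lemma is the classical low-degree computation of Lie-algebra cohomology with trivial coefficients for a semisimple Lie algebra, and the plan is to deduce it from the structure theory of complex simple Lie algebras together with base change and the K\"unneth formula. First I would record that Chevalley--Eilenberg cohomology with trivial coefficients commutes with extension of the ground field: for any field $K \supseteq \Q$ the complex computing $H^\bullet(\gg\otimes_\Q K, K)$ is $\Lambda^\bullet(\gg\otimes_\Q K)^* = (\Lambda^\bullet\gg^*)\otimes_\Q K$, so $H^\bullet(\gg\otimes_\Q K, K) = H^\bullet(\gg,\Q)\otimes_\Q K$. Taking $K=\Ql$ and $K=\C$ shows $\dim_{\Ql} H^r(\gg\otimes\Ql,\Ql) = \dim_\C H^r(\gg\otimes\C,\C)$ for all $r$, so it suffices to prove the dimension count over $\C$. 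Since $G/\Q$ is simple, $\gg$ is a simple $\Q$-Lie algebra, $\gg\otimes_\Q\C$ is semisimple, and we may write $\gg\otimes_\Q\C=\bigoplus_{i=1}^{b}\gs_i$ with each $\gs_i$ complex simple; here $b$ is by definition the number of simple components of $G\times_\Q\C$, and counting the factors of $(\gg\otimes_\Q\R)\otimes_\R\C$ — a real simple factor of complex type splits into two over $\C$, one of real type stays simple — gives $b=b_\R+2b_\C$, as asserted.

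Next, for each complex simple $\gs_i$ I would use the standard fact (Weyl's unitary trick: average the cochain complex over a compact real form) that the inclusion $(\Lambda^\bullet\gs_i^*)^{\gs_i}\hookrightarrow\Lambda^\bullet\gs_i^*$ is a quasi-isomorphism and the differential vanishes on invariant forms, so $H^r(\gs_i,\C)=(\Lambda^r\gs_i^*)^{\gs_i}$. This immediately yields $H^0(\gs_i,\C)=\C$; $H^1(\gs_i,\C)=(\gs_i^*)^{\gs_i}=0$ because $\gs_i=[\gs_i,\gs_i]$; and $H^2(\gs_i,\C)=(\Lambda^2\gs_i^*)^{\gs_i}=0$ (Whitehead's second lemma: semisimple Lie algebras have no nontrivial central extensions). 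The one point that genuinely uses the structure theory, and which I would spell out, is that $H^3(\gs_i,\C)=(\Lambda^3\gs_i^*)^{\gs_i}$ is \emph{exactly} one-dimensional for $\gs_i$ simple: it is spanned by the Cartan $3$-form $(X,Y,Z)\mapsto\kappa([X,Y],Z)$, $\kappa$ the Killing form, which one checks is the unique invariant alternating $3$-form up to scalar; equivalently, $H^\bullet(\gs_i,\C)$ is an exterior algebra on primitive generators in degrees $2d_1-1<2d_2-1<\cdots$, where the $d_j$ are the degrees of the basic invariants of the Weyl group, and $d_1=2<d_2$ for a simple Lie algebra, so there is a single primitive generator in degree $3$.

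Finally, the K\"unneth formula gives $H^\bullet(\gg\otimes\C,\C)=\bigotimes_{i=1}^{b}H^\bullet(\gs_i,\C)$; since $H^1(\gs_i,\C)=H^2(\gs_i,\C)=0$ for all $i$, the only nonzero terms in total degree $\le 3$ are $H^0=\C$ and $H^3=\bigoplus_{i=1}^{b}H^3(\gs_i,\C)=\C^b$, while $H^1=H^2=0$. Combined with the base-change reduction of the first paragraph this proves the lemma. The main (indeed only) obstacle is the structure-theoretic input that $H^3$ of a complex simple Lie algebra is one-dimensional; everything else is formal. (One could also avoid passing to $\C$ and argue directly over $\Ql$, writing each $\Ql$-simple factor as $\Rest^F_{\Ql}$ of an absolutely simple Lie algebra over a finite extension $F/\Ql$ and summing $\dim_{\Ql}$, but the base-change argument above is cleaner.)
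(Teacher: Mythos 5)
Your proof is correct, and its skeleton coincides with the paper's: base-change to $\C$ (cohomology of the finite-dimensional Chevalley--Eilenberg complex commutes with field extension), decompose $\gg\otimes\C$ into $b$ complex simple summands $\gs_i$, kill degrees $1$ and $2$ by Whitehead's lemmas, and assemble with the K\"unneth formula. The one place you genuinely diverge is the justification that $\dim H^3(\gs_i,\C)=1$. The paper routes this through topology: it identifies $H^\bullet(\gs_i,\C)$ with the singular cohomology of a compact connected Lie group whose Lie algebra is a compact form of $\gs_i$, and then quotes \autoref{H3symspace}, proved in the appendix from Borel's thesis ($H^4(BG)$ is the space of $W$-invariant quadratic forms on $\gt$, spanned by the Killing form, whence $H^3(G)=\R$). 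You instead stay inside the complex: $H^\bullet(\gs_i,\C)=(\Lambda^\bullet\gs_i^*)^{\gs_i}$ by the unitary trick, and the invariant $3$-forms are spanned by the Cartan form $\kappa([X,Y],Z)$, with the Hopf/primitive-generator description (generators in degrees $2d_j-1$, and $d_1=2$ occurring with multiplicity one for a simple algebra) as a fallback. These are the same structural fact --- uniqueness up to scalar of the invariant symmetric bilinear form on a simple Lie algebra --- in algebraic rather than topological dress. Your version is more self-contained, whereas the paper's choice lets it reuse \autoref{H3symspace}, which it needs anyway for \autoref{corollary sym space}. The only step you leave as ``one checks'' is that every invariant alternating $3$-form on $\gs_i$ arises as $B([X,Y],Z)$ for an invariant symmetric bilinear form $B$; that is indeed the crux, but it is standard, and your Weyl-degree argument independently covers it.
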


\begin{proof}
	The dimension of the Lie algebra cohomology
	does not depend on the base field, so we may instead calculate the cohomology of $\gg \otimes \C$.
	There is a decomposition:
	\[
		\gg \otimes \C = \gg_1 \oplus \cdots \oplus \gg_b,
	\]
	where each $\gg_i$ is a complex simple Lie algebra.
	By Whitehead's first and second lemmas
	we have $H^r(\gg_i,\C)=0$ for $r=1,2$,
	and $H^0(\gg_i,\C)=\C$.
	It is well known (see for example section 1.6 of \cite{borel-wallach}) that $H^\bullet(\gg_i,\C)$
	is isomorphic to the singular cohomology
	of a compact connected Lie group with
	Lie algebra $\gg_i$.
	Hence by \autoref{H3symspace} each group $H^3(\gg_i,\C)$ is $1$-dimensional.
	The lemma follows from the K\"unneth formula.
\end{proof}

\subsection{The end of the proof}

Recall from \autoref{proposition l-adic sequence}
that we have an exact sequence:
\[
	0 \to H^2_\cts(L,\Zl)
	\to H^2(\GammaL,\Zl)
	\to \bar H^2(\Zl)^L
	\to H^3_\cts(L, \Zl)
	\to H^3(\GammaL,\Zl).
\]
Tensoring with $\Ql$ and using \autoref{lemma L-cohomology} and \autoref{lemma gg-cohomology},
we have an exact sequence
of $\Ql$-vector spaces.
We've seen in \autoref{h2barZl properties} that $\bar H^2(\Zl)$ is torsion-free. This implies that we may regard $\bar H^2(\Zl)$ as a subgroup of $\bar H^2(\Ql)$,
where we are using the notation
$\bar H^2(\Ql)=\bar H^2(\Zl) \otimes \Ql$.
It follows that
$\bar H^2(\Zl)^L \otimes \Ql = \bar H^2(\Ql)^L$.
In view of this, we have an exact sequence
\begin{equation}
	\label{eq Ql spaces}
	0
	\to H^2(\GammaL,\Ql) \to
	\bar H^2(\Ql)^L \to
	H^3(\gg\otimes \Ql,\Ql) \to
	H^3(\GammaL,\Ql).
\end{equation}
The vector spaces in
\autoref{eq Ql spaces} are all finite dimensional; this follows for $\bar H^2(\Ql)^L$, because it is between the finite dimensional spaces $H^2(\GammaL,\Ql)$
and $H^3(\gg\otimes\Ql,\Ql)$.

We shall next take the projective limit over $S$ of the sequences in \autoref{eq Ql spaces}.
Since the sequences consist of finite dimensional vector spaces, the derived functors $\limpd{S}$ all vanish, so we have the following exact sequence:
\[
	0 \to
	H^2(G(\Q),\Ql) \to
	\bar H^2(\Ql)^{\hGQ} \to
	H^3(\gg\otimes\Ql,\Ql) \to
	H^3(G(\Q),\Ql).
\]
Here we have used \autoref{proposition projective S-arithmetic}, which shows that the projective limit (over $S$) of the groups $H^r(\GammaL,\Ql)$ is $H^r(G(\Q),\Ql)$.

The dimensions of the groups $H^r(G(\Q),\Ql)$ are the same as those of $H^r(G(\Q),\C)$, and by
\autoref{proposition projective S-arithmetic}
these are the same as the the relative Lie algebra
cohomology groups $H^r(\gg,\gk,\C)$.
Here $\gk$ is the Lie algebra of a maximal compact
subgroup $K_\infty$ of $G(\R)$.

Recall from section 1.6 of \cite{borel-wallach} that the relative Lie algebra cohomology
groups are isomorphic to the singular cohomology
groups $H^r(X^*,\C)$, where $X^*$ is the compact dual of the symmetric space $X=G(\R)/K_\infty$.
We calculate the dimensions of these spaces in
the appendix.
The results (see \autoref{corollary sym space}) are:
\[
	\dim H^2(\gg,\gk,\C)=\dim(Z(K_\infty)).
\]
\[
	\dim H^3(\gg,\gk,\C) = \#\text{simple components of $G \times \R$ of complex type.}
\]
If we write $b_\C$ for the number of simple components
of $G\times \R$ of complex type, and $b_\R$ for the number of simple components of real type,
then the number of simple components of $G\times \C$
is $b_\R+2b_\C$.
Substituting these dimensions, we have an exact sequence of the form
\[
	0 \to
	\Ql^{\dim(Z(K_\infty))} \to
	\bar H^2(\Ql)^{\hGQ} \to
	\Ql^{b_\R+2b_\C} \to
	\Ql^{b_\C}.
\]
It follows that the dimension of $\bar H^2(\Ql)^\hGQ$ is between $\dim(Z(K_\infty))+b_\R+b_\C$ and
$\dim(Z(K_\infty))+b_\R+2b_\C$.

Since $\bar H^2(\Zl)$ is torsion-free, it follows that $\bar H^2(\Zl)^\hGQ$ is a torsion-free $\Zl$-module, which spans $\bar H^2(\Ql)^\hGQ$.
On the other hand, $\bar H^2(\Zl)$ has no non-zero divisible elements.
This implies that
$\bar H^2(\Zl)^\hGQ \cong \Zl^{c}$,
where $c= \dim \bar H^2(\Ql)^\hGQ$.
This finishes the proof of the \autoref{reform3}.

\begin{cor}
	There is a subgroup of $\bar H^2(\Zlt)^\hGQ$ isomorphic to $(\Zlt)^{c}$,
	all of whose elements virtually lift to characteristic zero,
	where $c=\rank_{\Zl} \left(\bar H^2(\Zl)^\hGQ\right)$.
\end{cor}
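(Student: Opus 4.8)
The plan is to realise the required subgroup as the image of $\bar H^2(\Zl)^{\hGQ}$ under reduction modulo $l^t$, and to identify that image using the torsion-freeness of $\bar H^2(\Zl)$ together with \autoref{reform3}.

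First I would write down the long exact sequence attached to the short exact sequence of $\GammaL\times L$-modules
\[
	0 \to \cCLZl \xrightarrow{\ \times l^t\ } \cCLZl \to \cCLZlt \to 0
\]
(the right-hand map is well defined because a continuous function $L\to\Zlt$ has clopen fibres, hence lifts to a locally constant $\Zl$-valued function). Since $\bar H^1(\Zlt)=0$ by \autoref{SmallHmod}, the reduction map $\bar H^2(\Zl)\to\bar H^2(\Zlt)$ then induces an $\hGQ$-equivariant injection $\iota\colon \bar H^2(\Zl)/l^t\bar H^2(\Zl)\hookrightarrow\bar H^2(\Zlt)$, with cokernel inside $\bar H^3(\Zl)$. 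Note that $\iota$ is exactly the structure map exhibiting $\bar H^2(\Zl)=\limp{t}\bar H^2(\Zlt)$, so its image consists precisely of the elements of $\bar H^2(\Zlt)$ that virtually lift to characteristic zero.

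Next I would pass to $\hGQ$-invariants, using that $(-)^{\hGQ}$ is left exact in two ways. Applying it to $\iota$ gives an injection $\bigl(\bar H^2(\Zl)/l^t\bar H^2(\Zl)\bigr)^{\hGQ}\hookrightarrow\bar H^2(\Zlt)^{\hGQ}$. Applying it instead to $0\to l^t\bar H^2(\Zl)\to\bar H^2(\Zl)\to\bar H^2(\Zl)/l^t\bar H^2(\Zl)\to 0$, and using that multiplication by $l^t$ is an $\hGQ$-equivariant injection on the torsion-free module $\bar H^2(\Zl)$ (part (5) of \autoref{h2barZl properties}) so that $\bigl(l^t\bar H^2(\Zl)\bigr)^{\hGQ}=l^t\,\bar H^2(\Zl)^{\hGQ}$, gives an injection $\bar H^2(\Zl)^{\hGQ}/l^t\bar H^2(\Zl)^{\hGQ}\hookrightarrow\bigl(\bar H^2(\Zl)/l^t\bar H^2(\Zl)\bigr)^{\hGQ}$. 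Composing, the reduction map $\bar H^2(\Zl)^{\hGQ}\to\bar H^2(\Zlt)^{\hGQ}$ factors as a surjection onto $\bar H^2(\Zl)^{\hGQ}/l^t\bar H^2(\Zl)^{\hGQ}$ followed by an injection into $\bar H^2(\Zlt)^{\hGQ}$; hence its image $V$ is isomorphic to $\bar H^2(\Zl)^{\hGQ}/l^t\bar H^2(\Zl)^{\hGQ}$.

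Finally, \autoref{reform3} gives $\bar H^2(\Zl)^{\hGQ}\cong\Zl^{c}$, so $V\cong(\Zlt)^{c}$; and every element of $V$ lies in the image of $\bar H^2(\Zl)\to\bar H^2(\Zlt)$, which is exactly the condition of virtual lifting to characteristic zero. I do not expect a genuine obstacle here: the only points needing care are the two applications of left-exactness of $(-)^{\hGQ}$ — in particular the identification $\bigl(l^t\bar H^2(\Zl)\bigr)^{\hGQ}=l^t\,\bar H^2(\Zl)^{\hGQ}$, which is where torsion-freeness enters — and the remark that the map $\iota$ produced by the multiplication-by-$l^t$ sequence is the same reduction map that defines virtual lifting, so that $V\subseteq\bar H^2(\Zlt)^{\hGQ}$ really does consist of virtually liftable classes.
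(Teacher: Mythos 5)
Your proposal is correct and follows essentially the same route as the paper: the multiplication-by-$l^t$ long exact sequence for $\cCLZl$, the vanishing of $\bar H^1(\Zlt)$, passage to $\hGQ$-invariants, and the identification $\bar H^2(\Zl)^{\hGQ}\cong\Zl^c$ from \autoref{reform3}. Your extra care with the two applications of left-exactness of $(-)^{\hGQ}$ (in particular $(l^t\bar H^2(\Zl))^{\hGQ}=l^t\bar H^2(\Zl)^{\hGQ}$ via torsion-freeness) just makes explicit what the paper leaves implicit.
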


\begin{proof}
	We have a short exact sequence
	\[
		0 \to \cCLZl \stackrel{\times l^t}\to \cCLZl \to \scC(L,\Zlt) \to 0.
	\]
	This gives a long exact sequence containing
	the following terms
	\[
		\bar H^1(\Zlt) \to \bar H^2(\Zl)
		\stackrel{\times l^t}{\to} \bar H^2(\Zl)
		\to \bar H^2(\Zlt).
	\]
	We've shown that $\bar H^1(\Zlt)=0$, so we
	have
	\[
		0 \to \bar H^2(\Zl)
		\stackrel{\times l^t}{\to} \bar H^2(\Zl)
		\to \bar H^2(\Zlt).
	\]
	We'll write $A$ for the subgroup of elements in $\bar H^2(\Zlt)$ which virtually lift to characteristic zero.
	By definition, $A$ is the image of $\bar H^2(\Zl)$ in $H^2(\Zlt)$.
	We therefore have a short exact sequence
	\[
		0 \to \bar H^2(\Zl)
		\stackrel{\times l^t}{\to} \bar H^2(\Zl)
		\to A \to 0.
	\]
	Taking $\hGQ$-invariants, we have an exact sequence
	\[
		0 \to \bar H^2(\Zl)^\hGQ
		\stackrel{\times l^t}{\to}
		 \bar H^2(\Zl)^\hGQ
		\to A^\hGQ.
	\]
	The result follows because $\bar H^2(\Zl)^\hGQ$ is isomorphic to $\Zl^c$.
\end{proof}

\section*{Appendix: A result on compact symmetric spaces}

In this appendix, we shall calculate the low dimensional cohomology of the compact simple symmetric spaces.
Such spaces have the form $G/K$, where $G$ is a compact, connected, simple Lie group and $K$ is a closed, connected subgroup.
In the following result, we shall use the shorthand $H^\bullet(X)$ for the singular
cohomology on the topological space $X$ with coefficients in $\R$.

\begin{proposition}
	\label{H3symspace}
	Let $G$ be a compact, connected simple Lie group and $K$ a closed, connected subgroup.
	There are isomorphisms:
	\begin{align*}
		H^1(G/K)&=0,\\
		H^{2}(G/K)
		&=
		\gz(\gk)^{*},\\
		H^{3}(G/K)
		&=
		\begin{cases}
			\R & \hbox{if $K$ is trivial,}\\
			0 & \hbox{otherwise}.
		\end{cases}
	\end{align*}
	Here $\gz(\gk)^*$ denotes the dual space of the centre of the Lie algebra $\gk$ of $K$.
\end{proposition}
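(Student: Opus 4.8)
The plan is to read off all three cohomology groups from the real Serre spectral sequence of the Borel fibration
\[
	G/K \longrightarrow BK \longrightarrow BG ,
\]
that is, the bundle $EG/K \to EG/G$ with fibre $G/K$. Since $BG$ is simply connected the coefficient systems are trivial, so the spectral sequence has the form $E_2^{p,q} = H^p(BG)\otimes H^q(G/K) \Rightarrow H^{p+q}(BK)$ with real coefficients. I would use the classical facts that, for a compact connected Lie group $H$, the ring $H^\bullet(BH) = (\Sym \gt_H^*)^{W_H}$ is concentrated in even degrees with $H^0(BH)=\R$; that $H^2(BG)=0$, while $H^4(BG)=\R\cdot[\kappa_\gg]$ is spanned by the class of the Killing form $\kappa_\gg$ (this is where $G$ simple enters, and recall $\kappa_\gg$ is negative definite as $G$ is compact); and that $H^2(BK)=\gz(\gk)^*$, $H^3(BK)=0$.

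For $H^1$ and $H^2$ I would run the edge analysis. Every $E_2^{p,q}$ with $p\ge 1$ and $p+q\le 2$ vanishes because $H^1(BG)=H^2(BG)=0$, so in total degrees $1$ and $2$ the only surviving entries are $E_\infty^{0,1}$ and $E_\infty^{0,2}$; moreover no nonzero differential can enter or leave $E^{0,1}$ or $E^{0,2}$, since the relevant targets $E_\bullet^{2,0}$, $E_\bullet^{2,1}$, $E_\bullet^{3,0}$ all vanish. Hence $H^1(G/K)\cong H^1(BK)=0$ and $H^2(G/K)\cong H^2(BK)=\gz(\gk)^*$, the first two formulas.

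For $H^3$ the point is the transgression. A routine check (all intermediate differentials have a source or target in a column $p\in\{1,2,3\}$, where $E_2$ vanishes) gives $E_4^{0,3}=H^3(G/K)$ and $E_4^{4,0}=H^4(BG)=\R$, and the only differential touching $E^{0,3}$ is $d_4\colon H^3(G/K)\to\R$. Its kernel is $E_\infty^{0,3}$, a subquotient of $H^3(BK)=0$, so $d_4$ is injective and $\dim H^3(G/K)\le 1$. Its cokernel is $E_\infty^{4,0}$, which by the edge homomorphism equals the image of the restriction map $H^4(BG)\to H^4(BK)$ coming from $K\hookrightarrow G$. If $K=1$ then $BK$ is a point, $H^4(BK)=0$ forces $d_4$ surjective, hence an isomorphism, so $H^3(G)=\R$ — the value needed in \autoref{lemma gg-cohomology}. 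If $K\ne 1$, decompose $\gk=\gz(\gk)\oplus\gk_1\oplus\cdots\oplus\gk_s$ with each $\gk_i$ simple; then $H^4(BK)=\Sym^2\gz(\gk)^*\oplus\bigoplus_i\R\cdot[\kappa_{\gk_i}]$, and the image of $[\kappa_\gg]$ is the class of $\kappa_\gg|_\gk$. Since $\kappa_\gg$ is negative definite, $\kappa_\gg|_\gk$ is nonzero on $\gz(\gk)$ and on every $\gk_i$, so this class is nonzero; hence $\coker d_4\ne 0$, the injection $d_4$ is not surjective, and therefore $H^3(G/K)=E_\infty^{0,3}=0$.

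The purely formal steps — pinning down the relevant $E_2$ and $E_4$ entries, the vanishing of the columns $p=1,2,3$ over $BG$, the edge-homomorphism identification of $E_\infty^{4,0}$ — are routine. The one substantive point, and the only place the hypotheses are really used, is the nonvanishing of the restricted Killing class when $K\ne 1$: this needs $G$ simple (so $H^4(BG)$ is one-dimensional, spanned by a definite form) plus the elementary remark that a definite quadratic form on $\gk$ restricts nontrivially to $\gz(\gk)$ and to each simple ideal of $\gk$. I would also note in passing that the symmetric-space structure of $G/K$ plays no role in this argument.
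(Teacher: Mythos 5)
Your proposal is correct and follows essentially the same route as the paper: the real Serre spectral sequence of the Borel fibration $G/K \to BK \to BG$, the vanishing of $H^1(BG)$ and $H^2(BG)$ to read off $H^1$ and $H^2$ of $G/K$ from $H^\bullet(BK)$, and the transgression into $H^4(BG)=\R\cdot[\kappa_\gg]$ together with the nonvanishing of the restricted Killing class (the paper cites Borel's Proposition 28.2, identifying the edge map as restriction of $W$-invariant polynomials to the maximal torus of $K$) to settle $H^3$.
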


\begin{remark}
	One might expect to be able to look this result up in tables; however I
	didn't manage to find such tables, so I am including a proof. The result is
	a straightforward consequence of results in Borel's thesis \cite{borel-thesis}.
\end{remark}

\begin{proof}
We shall first review some results from \cite{borel-thesis}
on the cohomology of compact Lie groups and their classifying spaces.
Suppose that $G$ is compact connected Lie group.
We shall write $BG$ for the classifying space of $G$.
This is a space with a fibre bundle
$$
	\begin{matrix}
		G& \to& EG \\
		&& \downarrow \\
		&& BG
	\end{matrix}
$$
such that the cohomology of $EG$ is the cohomology of a point.

The real singular cohomology of $G$ is (as a ring) an exterior algebra whose generators
are cohomology classes $x_{1},\ldots,x_{n}$ in odd dimensions $s_{1}, \ldots,s_{n}$.
For each generator $x_{i}\in H^{s_{i}}(G)$, there is an element $y_{i}\in H^{s_{i+1}}(BG)$ called the transgression of $x_{i}$.
Furthermore the cohomology ring $H^{\bullet}(BG)$ is equal to the polynomial ring
$\R[y_{1},\ldots,y_{n}]$.
In particular $BG$ only has non-zero real cohomology in even dimensions.

As an example of this, let $T$ be an $n$-dimensional compact torus,
 and let $\gt$ be its Lie algebra.
Recall that the cohomology of $T$ is exactly the exterior algebra of $H^{1}(T)$.
Furthermore, we may identify $H^{1}(T)$ with the dual space of $\gt$.
As a result of this, we know that $H^{\bullet}(BT)$ is the algebra $\R[\gt]$
 of polynomial functions on $\gt$.
Furthermore, $H^{2n}(BT)$ is the space of homogeneous polynomials
on $\gt$ of degree $n$.

Suppose now that $G$ is a compact, connected Lie group and $T$ is a maximal torus
in $G$.
We shall write $W$ for the Weyl group of $G$ with respect to $T$.
We have a fibre bundle
$$
	\begin{matrix}
		G/T & \to & BT \\
		&&\downarrow\\
		&& BG
	\end{matrix}
	\qquad\qquad
	(BT = EG/T).
$$
Corresponding to this there is a spectral sequence
$$
	H^{r}(BG, H^{s}(G/T)) \Rightarrow H^{r+s}(BT).
$$
As $G$ is connected, it follows easily that $BG$ is simply connected.
This implies that $H^s(G/T)$ is a trivial bundle on $BG$, and so the spectral sequence takes the form
\[
	H^{r}(BG) \otimes H^{s}(G/T) \Rightarrow H^{r+s}(BT).
\]
In particular we have an edge map $H^{\bullet}(BG) \to H^{\bullet}(BT)$.
We'll use the following result, which describes this edge map.

\begin{proposition}[Proposition 27.1 of \cite{borel-thesis}]
	\label{prop borel 1}
	Let $G$ be a compact, connected Lie group
	and $T$ a maximal torus in $G$.
	The edge map $H^{\bullet}(BG) \to H^{\bullet}(BT)$ is
	injective.
	Its image is the subspace $H^{\bullet}(BT)^{W}$ of $W$-invariant
	polynomial functions on $\gt$.
\end{proposition}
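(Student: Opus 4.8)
The plan is to analyse the Serre spectral sequence of the fibration $G/T \to BT \to BG$ introduced above, whose $E_2$-page is $H^r(BG)\otimes H^s(G/T)$ (the system of coefficients is untwisted since $BG$ is simply connected). First I would record that $H^\bullet(G/T)$ is concentrated in even degrees: the compact homogeneous space $G/T$ carries the Bruhat cell decomposition (it is diffeomorphic to the flag manifold $G_\C/B$), with exactly one cell of real dimension $2\ell(w)$ for each $w$ in the Weyl group $W$, so $\dim_\R H^\bullet(G/T)=|W|$ and the odd cohomology vanishes. Since, as recalled above, $H^\bullet(BG)$ also lives in even degrees, every differential $d_r$ (which raises total degree by one, hence sends an even-degree subquotient into an odd-degree one or conversely) must vanish. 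Hence the spectral sequence degenerates at $E_2$.

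\textbf{Injectivity.} After degeneration, the edge homomorphism $\pi^\ast\colon H^n(BG)=E_2^{n,0}\to H^n(BT)$ is the inclusion of the bottom step $E_\infty^{n,0}$ of the filtration on $H^n(BT)$, and is therefore injective. (Equivalently, one may invoke the Becker--Gottlieb transfer $\tau\colon H^\bullet(BT)\to H^\bullet(BG)$, for which $\tau\circ\pi^\ast=\chi(G/T)\cdot\mathrm{id}=|W|\cdot\mathrm{id}$, and $|W|\neq 0$ in $\R$.)

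\textbf{Identifying the image.} The normalizer $N=N_G(T)$ acts on $EG$ by restricting the $G$-action, and this descends to an action of $W=N/T$ on $BT=EG/T$ (the subgroup $T$ acts trivially there) which covers the identity of $BG=EG/G$ because $N\subseteq G$. Thus $\pi^\ast$ is $W$-equivariant for the trivial action on its source, so its image lies in $H^\bullet(BT)^W$; under the identification $H^\bullet(BT)=\R[\gt]$ recalled above this is the usual Weyl action on polynomial functions on $\gt$, coming from conjugation of $N$ on $T$. To see that $\pi^\ast$ surjects onto $\R[\gt]^W$ I would compare Poincaré series: degeneration gives $P_{BT}(t)=P_{BG}(t)\,P_{G/T}(t)$, and since $P_{BT}(t)=(1-t^2)^{-n}$ (as $\R[\gt]$ is polynomial on $n=\dim\gt$ generators in degree $2$) and $P_{G/T}(t)=\sum_{w\in W}t^{2\ell(w)}=\prod_{i=1}^n\frac{1-t^{2d_i}}{1-t^2}$, where $d_1,\dots,d_n$ are the degrees of the basic invariants of $W$, one obtains $P_{BG}(t)=\prod_i(1-t^{2d_i})^{-1}$. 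By Chevalley's theorem, $\R[\gt]^W$ is itself a polynomial algebra on $n$ homogeneous generators of cohomological degree $2d_i$, hence has the \emph{same} Poincaré series. Since $\pi^\ast$ is a degree-preserving injection of $H^\bullet(BG)$ onto a graded subspace of $\R[\gt]^W$ whose graded pieces have equal finite dimension, it is onto, which completes the proof.

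\textbf{Main obstacle.} The degeneration and the $W$-equivariance are formal; the real content is the surjectivity onto the invariants, which requires genuine input on reflection groups — Chevalley's theorem (equivalently Borel's computation that $H^\bullet(BG;\R)$ is polynomial on generators in degrees $2d_i$, itself resting on Hopf's theorem on $H^\bullet(G;\R)$). A route that avoids this would be to prove the transfer identity $\pi^\ast\circ\tau=\sum_{w\in W}w^\ast$ on $H^\bullet(BT)$, which yields surjectivity onto $W$-invariants at once (apply $\tfrac1{|W|}\tau$ to an invariant class); but verifying that identity for the Becker--Gottlieb transfer of a $G/T$-bundle is itself a nontrivial point.
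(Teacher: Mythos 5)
Your proposal is correct. Note, however, that the paper itself offers no proof of this statement: it is quoted verbatim as Proposition~27.1 of Borel's thesis and used as a black box, so there is nothing in the text to compare your argument against line by line. What you have written is essentially the modern streamlining of Borel's own argument. The skeleton is the same as his (the Serre spectral sequence of $G/T \to BT \to BG$, degeneration at $E_2$ because fibre and base have cohomology concentrated in even degrees, injectivity of the edge map, $W$-equivariance forcing the image into $H^\bullet(BT)^W$). Where you diverge is in the surjectivity step: Borel identifies $H^\bullet(G/T)$ with the coinvariant algebra $\R[\gt]/(\R[\gt]^W_+)$ and exhibits $H^\bullet(BT)$ as a free $H^\bullet(BG)$-module, whereas you close the argument by a Poincar\'e-series count, matching $P_{BG}(t)=\prod_i(1-t^{2d_i})^{-1}$ (obtained from $P_{BT}=P_{BG}\cdot P_{G/T}$ and the factorization $\sum_{w}t^{2\ell(w)}=\prod_i\frac{1-t^{2d_i}}{1-t^2}$) against Chevalley's theorem for $\R[\gt]^W$. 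This buys brevity at the cost of invoking Chevalley's theorem as external input, a trade-off you correctly flag; it also quietly assumes the degrees $d_i$ appearing in the Bruhat count coincide with the degrees of the basic invariants, which is exactly the Chevalley--Solomon factorization and is fine. All the individual steps (Bruhat decomposition of $G/T$, the $N_G(T)$-action on $EG/T$ descending to a $W$-action covering the identity on $BG$, the identification of that action with the reflection action on $\R[\gt]$, and the graded-injection-plus-equal-dimensions argument) check out, including in the non-semisimple case where some $d_i=1$.
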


As a result of this proposition, we know that for semi-simple $G$ we have
$H^{2}(BG) =0$.
This is because there are no $W$-invariant linear forms on $\gt$.
For simple $G$ we have
\[
	H^{4}(BG) = \R.
\]
Recall that $H^{4}(BG)$ is the space of $W$-invariant quadratic forms on $\gt$.
The restriction of the Killing form is one such form, and any other is a constant multiple of this.
As a consequence, we see that $H^{1}(G)=H^{2}(G)=0$ and $H^{3}(G)=\R$.
This proves \autoref{H3symspace} in the case that $K$ is trivial.

Assume now that $K$ is a non-trivial closed, connected subgroup of $G$.
We shall use the spectral sequence of the following fibration:
$$
	\begin{matrix}
		G/K & \to & BK \\
		&& \downarrow\\
		&& BG.
	\end{matrix}
$$
That is:
\begin{equation}
	\label{specseq-symspace}
	H^{r}(BG) \otimes H^{s}(G/K)
	\Rightarrow
	H^{r+s}(BK).
\end{equation}
Let $S$ be a maximal torus in $K$ and $T\supset S$ be a maximal torus in $G$,
and let $W_{G}$ and $W_{K}$ be the corresponding Weyl groups.
From the spectral sequence in \autoref{specseq-symspace} we have an edge map
\[
	H^{\bullet}(BG) \to H^{\bullet}(BK).
\]
By \autoref{prop borel 1}, we may interpret this as a map
\[
	\R[\gt]^{W_{G}} \to \R[\gs]^{W_{K}},
\]
where $\gs$ is the Lie algebra of $S$.
This map has been determined by Borel:

\begin{proposition}[Proposition 28.2 of \cite{borel-thesis}]
	The above edge map is given by restricting a polynomial function on $\gt$ to
	the subspace $\gs$.
\end{proposition}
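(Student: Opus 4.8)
The plan is to recognise this edge map as the map on classifying-space cohomology induced by the inclusion $i: K \hookrightarrow G$, and then to determine it by comparison with the analogous map for the maximal tori $S \subseteq T$. First I would recall that for any fibration $F \to E \to B$ with projection $p$, the edge homomorphism $H^{\bullet}(B) = E_{2}^{\bullet,0} \twoheadrightarrow E_{\infty}^{\bullet,0} \hookrightarrow H^{\bullet}(E)$ coincides with $p^{\ast}$. Applied to $G/K \to BK \to BG$ (with $BK = EG/K$ and $p$ the natural map to $BG = EG/G$), this identifies the edge map of \autoref{specseq-symspace} with $(Bi)^{\ast}: H^{\bullet}(BG) \to H^{\bullet}(BK)$. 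The same observation identifies the edge maps of the fibrations $G/T \to BT \to BG$ and $K/S \to BS \to BK$ occurring in \autoref{prop borel 1} with the maps induced by the inclusions $T \hookrightarrow G$ and $S \hookrightarrow K$.

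Since the inclusions $S \subseteq T$, $S \subseteq K$, $T \subseteq G$, $K \subseteq G$ form a commutative square of closed subgroups (both composites $S \to G$ being the inclusion), passing to classifying spaces and then to cohomology produces a commutative square:
\[
	\begin{tikzcd}
		H^{\bullet}(BG) \rar \dar & H^{\bullet}(BT) \dar \\
		H^{\bullet}(BK) \rar & H^{\bullet}(BS)
	\end{tikzcd}
\]
By \autoref{prop borel 1} the top arrow is the inclusion $\R[\gt]^{W_{G}} \hookrightarrow \R[\gt] = H^{\bullet}(BT)$ and the bottom arrow is the inclusion $\R[\gs]^{W_{K}} \hookrightarrow \R[\gs] = H^{\bullet}(BS)$; in particular the bottom arrow is injective. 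It therefore suffices to show that the right-hand arrow $H^{\bullet}(BT) \to H^{\bullet}(BS)$, induced by $S \hookrightarrow T$, is restriction of polynomial functions along $\gs \hookrightarrow \gt$. Granting this, a diagram chase through the square shows that the left-hand arrow $\R[\gt]^{W_{G}} \to \R[\gs]^{W_{K}}$ sends a $W_{G}$-invariant polynomial $f$ on $\gt$ to $f|_{\gs}$, which automatically lies in $\R[\gs]^{W_{K}}$ because it is in the image of that arrow.

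For the remaining claim, note that $H^{\bullet}(BT) = \R[\gt]$ and $H^{\bullet}(BS) = \R[\gs]$ are polynomial algebras generated in degree $2$, with $H^{2}(BT) = \gt^{\ast}$ and $H^{2}(BS) = \gs^{\ast}$ arising as transgressions of $H^{1}(T) = \gt^{\ast}$ and $H^{1}(S) = \gs^{\ast}$. As the map $H^{\bullet}(BT) \to H^{\bullet}(BS)$ is a graded ring homomorphism, it is determined by its degree-$2$ component; by naturality of the transgression, that component is the transgression of the map $H^{1}(T) \to H^{1}(S)$ induced by $S \hookrightarrow T$. Under the identifications $H^{1}(T) = \gt^{\ast}$, $H^{1}(S) = \gs^{\ast}$ (via the fact that $\pi_{1}$ is the integral lattice spanning the Lie algebra), this map is dual to the inclusion $\gs \hookrightarrow \gt$, i.e.\ ordinary restriction of linear functionals; taking symmetric powers gives restriction of polynomial functions, as required.

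The step I expect to be the main obstacle is the bookkeeping rather than anything conceptually deep: one must verify that the edge maps of the four relevant fibrations genuinely assemble into the commutative square above (functoriality of the Borel construction and of the Serre spectral sequence with respect to the square of group inclusions), and one must track the identification $H^{2}(BT) \cong \gt^{\ast}$ through the transgression carefully enough to recognise the degree-$2$ map as the dual of $\gs \hookrightarrow \gt$. Neither requires new ideas, but both demand care with conventions and signs.
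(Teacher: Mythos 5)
Your argument is correct. Note that the paper itself offers no proof of this statement: it is quoted verbatim as Proposition 28.2 of Borel's thesis, so there is nothing in the text to compare against. Your route --- identifying the edge homomorphism of $G/K \to BK \to BG$ with $(Bi)^{*}$, fitting it into the commutative square of classifying spaces coming from $S \subseteq T$, $S \subseteq K$, $T \subseteq G$, $K \subseteq G$, using the injectivity of $H^{\bullet}(BK)\hookrightarrow H^{\bullet}(BS)$ from \autoref{prop borel 1} applied to $(K,S)$, and pinning down $H^{\bullet}(BT)\to H^{\bullet}(BS)$ in degree $2$ via naturality of the transgression --- is the standard proof and is essentially Borel's own; all the identifications you use ($H^{2}(BT)\cong\gt^{*}$ via transgression from $H^{1}(T)$, polynomial generation in degree $2$) are exactly the ones the paper sets up before stating the proposition, so your degree-$2$ reduction is compatible with the conventions in force.
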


We can now finish proving our proposition.
Suppose that $G$ is simple and $K$ is non-trivial.
Then $H^{4}(BG)$ is generated by the Killing form.
Since the Killing form is negative definite, its restriction to $\gs$ is non-zero.
This shows that the edge map $H^{4}(BG) \to H^{4}(BK)$ is injective.
The $E_{2}$-sheet of the spectral sequence in \autoref{specseq-symspace}
looks like this:
\[
	\begin{tikzcd}
		H^{3}(G/K) & 0 & 0 & 0\\
		H^{2}(G/K) & 0 & 0 & 0\\
		H^{1}(G/K) & 0 & 0 & 0\\
		\R & 0 & 0 & 0 & H^{4}(BG)
	\end{tikzcd}
\]
These groups all remain unchanged until the $E_4$
sheet, where we have a map $H^{3}(G/K) \to H^4(BG)$.
From this we see that
\[
	H^{1}(G/K)
	=
	H^{1}(BK)
	=
	0
	\quad \text{ because $1$ is odd,}
\]
\[
	H^{2}(G/K)
	=
	H^{2}(BK)
	=
	H^{1}(K)
	=
	\gz(\gk)^{*}.
\]
Furthermore there is an exact sequence:
\[
	0 \to H^{3}(BK) \to H^{3}(G/K) \to H^{4}(BG) \to H^{4}(BK).
\]
We've seen that $H^{3}(BK)=0$ (because $3$ is odd) and the edge map $H^{4}(BG) \to H^{4}(BK)$ is injective,
so it follows that $H^{3}(G/K)=0$.
\end{proof}

We finally translate the result above into a more usable form.
In the following corollary, $G$ is a semi-simple, simply connected algebraic group over $\R$ and $K_\infty$ is a maximal compact subgroup of $G(\R)$.
We shall write $b_\R$ and $b_\C$ for the number of simple components of $G$ of real and of complex type.
We write $\gg$ and $\gk$ for the Lie algebras of $G(\R)$ and $K_\infty$ respectively, and we write
$\gz(\gk)$ for the centre of $\gk$.

\begin{corollary}
	\label{corollary sym space}
	With the notation introduced above, we have
	\[
		H^2(\gg,\gk,\R) = \gz(\gk)^*,
		\qquad
		\dim H^3(\gg,\gk,\R) = b_\C.
	\]
\end{corollary}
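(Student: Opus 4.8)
The plan is to deduce \autoref{corollary sym space} from \autoref{H3symspace} by passing to the compact dual symmetric space. First I would fix a Cartan decomposition $\gg = \gk \oplus \gp$ and form the compact dual Lie algebra $\gg^u = \gk \oplus i\gp$ inside $\gg \otimes_\R \C$; it is the Lie algebra of a compact connected group $G^u$ containing $K_\infty$, and $X^* = G^u/K_\infty$ is the compact dual of $X = G(\R)/K_\infty$. By section 1.6 of \cite{borel-wallach} one has $H^\bullet(\gg,\gk,\R) = H^\bullet(X^*,\R)$, so it suffices to compute $H^2(X^*)$ and $H^3(X^*)$. Next I would decompose $\gg$ into its simple ideals $\gg = \gg_1 \oplus \cdots \oplus \gg_{b_\R+b_\C}$, ordered so that $\gg_1,\dots,\gg_{b_\R}$ are of real type and the remaining ones of complex type. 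The Cartan decomposition respects this splitting, so $K_\infty = \prod_i K_i$, $X^* = \prod_i X_i^*$ with $X_i^* = G_i^u/K_i$, and $\gz(\gk) = \bigoplus_i \gz(\gk_i)$. Since (as the case analysis below shows) each $H^1(X_i^*)$ vanishes, the Künneth formula reduces everything to computing $H^2$ and $H^3$ of each factor $X_i^*$.

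For a real-type factor, $G_i^u$ is a compact, connected, simple Lie group and $K_i$ a closed connected subgroup, so \autoref{H3symspace} applies directly and gives $H^1(X_i^*) = 0$, $H^2(X_i^*) = \gz(\gk_i)^*$, and $H^3(X_i^*) = 0$ unless $K_i$ is trivial. The subgroup $K_i$ is trivial precisely when $\gk_i = 0$, i.e.\ when the Killing form of $\gg_i$ is positive definite, i.e.\ when $\gg_i$ is compact; in that case $X_i^*$ is a point and $\gz(\gk_i) = \gz(\gg_i) = 0$ since $\gg_i$ is simple, so the formulas $H^2(X_i^*) = \gz(\gk_i)^*$ and $H^3(X_i^*) = 0$ remain valid. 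Thus real-type factors contribute only to $H^2$, and there via $\gz(\gk_i)^*$.

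For a complex-type factor $\gg_i$ — the underlying real Lie algebra of $\Lie(H)$ for a simple group $H/\C$ — I would use that $\gg_i \otimes_\R \C \cong \Lie(H) \oplus \Lie(H)$ with $\gk_i$ a compact real form of $\Lie(H)$ embedded diagonally, so that $G_i^u \cong H^u \times H^u$, $K_i = \Delta H^u$, and $X_i^* = (H^u \times H^u)/\Delta H^u \cong H^u$ as a manifold, where $H^u$ is the compact form of $H$. Applying \autoref{H3symspace} in the case where the subgroup is trivial, to the compact simple group $H^u$, then gives $H^1(X_i^*) = H^2(X_i^*) = 0$ and $H^3(X_i^*) = \R$; moreover $\gz(\gk_i) = 0$ because $\Lie(H)$ is simple.

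Combining the two cases through the Künneth formula yields $H^2(\gg,\gk,\R) = \bigoplus_i \gz(\gk_i)^* = \gz(\gk)^*$, while $\dim H^3(\gg,\gk,\R)$ equals the number of complex-type simple factors of $\gg$, namely $b_\C$ — which is the assertion of the corollary. The step I expect to be the main obstacle is the complex-type case: since $G_i^u = H^u \times H^u$ is not simple, \autoref{H3symspace} cannot be invoked for the symmetric pair $(G_i^u, K_i)$ directly, and one must first recognise $X_i^*$ as the group manifold $H^u$ and then apply the special case of the proposition in which the subgroup is trivial.
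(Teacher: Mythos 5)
Your argument is essentially the paper's own proof: pass to the compact dual via section 1.6 of \cite{borel-wallach}, split into simple factors, apply \autoref{H3symspace} directly to the real-type factors, identify each complex-type factor's dual $X_i^*$ with the group manifold of the compact form and apply the trivial-subgroup case of the proposition, then conclude by K\"unneth. Your one side remark is slightly off: for a simple $\gg_i$ one has $[\gp_i,\gp_i]\subseteq\gk_i$, so $\gk_i=0$ would force $\gg_i$ to be abelian and hence never occurs (and is not the same as $\gg_i$ being compact, where rather $\gk_i=\gg_i$ and $X_i^*$ is a point); this does not affect the conclusion, since in every case that actually arises your stated values of $H^2(X_i^*)$ and $H^3(X_i^*)$ are correct.
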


\begin{proof}
	Recall from section 1.6 of \cite{borel-wallach}, that the $\gg,\gk$-cohomology is the same as the singular cohomology of the
	compact symmetric space $X = H(\R)/K_\infty$,
	where $H$ is the compact form of $G$ over $\R$; in other words, $H(\R)$ is a maximal compact subgroup of $G(\C)$ containing $K_\infty$.
	
	Let $G= \prod_{i=1}^{b_\R+b_\C} G_i$,
	where each $G_i$ is a simple, simply connected group over $\R$; the groups $G_1,\ldots,G_{b_\R}$ are assumed to be of real type and the others are of complex type.
	The subgroup $K_\infty$ also decomposes as $\prod K_{i}$, where each $K_{i}$ is a maximal compact subgroup of $G_i(\R)$.
	Similarly the compact form $H$ decomposes
	as $\prod H_i$, where each $H_i$ is the compact form of $G_i$.
	This gives us a decomposition
	\[
		X = \prod X_i,
		\qquad \text{ where } X_i =H_i(\R) / K_i.
	\]
	Our aim is to calculate the cohomology of $X$
	using the K\"unneth formula.
	\begin{itemize}
		\item[Case 1.]
		In the case that $G_i$ is of real type,
		The group $G_i\times \C$ is simple over $\C$.
		Therefore $H_i(\R)$, being a maximal compact subgroup of $G_i(\C)$, is a simple, simply connected Lie group.
		By \autoref{H3symspace} we have
		\begin{align*}
			H^1(X_i) &=0,&
			H^2(X_i) &= \gz(\gk_i)^*,&
			H^3(X_i) &= 0.
		\end{align*}
		\item[Case 2.]
		Suppose instead that $G_i$ is of complex type.
		In this case $G_i \times \C$ splits as a direct sum of two simple groups and we have
		$G_i(\C) \cong G_i(\R) \times G_i(\R)$;
		the subgroups $G_i(\R)$ and $K_i$ are diagonally embedded in $G_i(\C)$.
		As $H_i(\R)$ is a maximal compact subgroup of $G_i(\C)$, we have $H_i(\R) = K_i \times K_i$.
		In this case, the compact symmetric space $X_i$
		is the quotient $(K_i \times K_i) / K_i$,
		which is homeomorphic to $K_i$.
		By \autoref{H3symspace} we have
		\begin{align*}
			H^1(X_i) &=0,&
			H^2(X_i) &= \gz(\gk_i)^*,&
			H^3(X_i) &= \R.
		\end{align*}
	\end{itemize}
	By the K\"unneth formula we have
	\[
		H^2(\gg,\gk,\R)
		=
		\bigoplus_i \gz(\gk_i)^*
		=\gz(\gk)^*,
	\]
	\[
		H^3(\gg,\gk,\R)
		=
		\bigoplus_{i \text{ of complex type}} \R
		=
		\R^{b_\C}.
	\]
\end{proof}

\bigskip

\sc{Department of Mathematics, University College London.}

\end{document}